\newtheorem{thmIntro}{Theorem}   
\newtheorem{propIntro}[thmIntro]{Proposition}
\keywords{Permutations, box-ball systems, soliton cellular automata, Young tableaux, Robinson--Schensted--Knuth correspondence, Greene's theorem, Knuth equivalence} 
\title{RSK tableaux and box-ball systems}
\author[1]{Ben Drucker\thanks{Supported by the University of Connecticut Mathematics REU and NSF (DMS-1950543).}}
\author[2]{Eli Garcia\thanks{Supported by the University of Connecticut Mathematics REU and NSF (DMS-1950543).}}
\author[3]{Emily Gunawan\thanks{Supported by the University of Connecticut Mathematics REU and NSF (DMS-1950543) and by the Isaac Newton Institute for Mathematical Sciences (funded by EPSRC Grant Number EP/R014604/1) during the programme \emph{Cluster algebras and representation theory}.}}
\author[4]{Aubrey Rumbolt\thanks{Supported by the University of Connecticut Mathematics REU and NSF (DMS-1950543).}}
\author[5]{Rose Silver\thanks{Supported by the University of Connecticut Mathematics REU and NSF (DMS-1950543).}}
\affil[1]{%
Pacific Northwest National Laboratory, Richland, WA, U.S.A

\email{ben.drucker@icloud.com}%
}
\affil[2]{%
Massachusetts Institute of Technology, Cambridge, MA, U.S.A

\email{tbone@mit.edu}%
}
\affil[3]{%
David and Judi Proctor Department of Mathematics, University of Oklahoma, Norman, OK, U.S.A

\email{egunawan@ou.edu}%
}
\affil[4]{%
Charles H. McCann Technical School, North Adams, MA, U.S.A

\email{ar366599@wne.edu}%
}
\affil[5]{%
Khoury College of Computer Sciences, Northeastern University, Boston, MA, U.S.A

\email{silver.r@northeastern.edu}%
}
\renewcommand \pi {v} 
\newcommand*{\algrule}[1][\algorithmicindent]{\makebox[#1][l]{\hspace*{.5em}\vrule height .5\baselineskip depth .25\baselineskip}}%
\def\ALG@printindent{%
    \ifnum \theALG@nested>0
        \ifx\ALG@text\ALG@x@notext
            \addvspace{-3pt}
        \else
            \unskip
            \ALG@printindent@tempcnta=1
            \loop
                \algrule[\csname ALG@ind@\the\ALG@printindent@tempcnta\endcsname]%
                \advance \ALG@printindent@tempcnta 1
            \ifnum \ALG@printindent@tempcnta<\numexpr\theALG@nested+1\relax
            \repeat
        \fi
    \fi
    }%
\patchcmd{\ALG@doentity}{\noindent\hskip\ALG@tlm}{\ALG@printindent}{}{\errmessage{failed to patch}} 
\algnewcommand{\LineComment}[1]{ 
    \State \hspace{0.5 em} \(\blacktriangleright\) #1}
    \algnewcommand{\Line,Comment}[1]{\State \hspace{0.5 em} \(\blacktriangleright\) #1}
    \newlength{\whilewidth}
    \algrenewcommand{\algorithmiccomment}[1]{\hspace{0.33em}$\blacktriangleright$ #1}
\renewcommand{\th}{$^\text{th}\text{ }$}
\DeclareMathOperator{\sh}{sh}
\DeclareMathOperator{\incr}{i}
\DeclareMathOperator{\decr}{d}
\DeclareMathOperator{\localincr}{I}
\DeclareMathOperator{\localdecr}{D}
\DeclareMathOperator{\SDself}{SD}
\DeclareMathOperator{\Ptself}{P}
\DeclareMathOperator{\Qtself}{Q}
\newcommand{\SD}[1]{\SDself(#1)}
\renewcommand{\P}[1]{\Ptself(#1)}
\newcommand{\Q}[1]{\Qtself(#1)}
\newcommand{\Qmaxtime}{\widehat{Q}}
\newcommand{\SnQmaxtime}{S_n (\widehat{Q})}
\newcommand{\lambdaRS}{\lambda}
\newcommand{\muRS}{\mu}
\newcommand{\lambdaBBS}{\Lambda}
\newcommand{\muBBS}{M}
\begin{document}

\maketitle


\begin{abstract}
A box-ball system is a discrete dynamical system whose dynamics come from the balls jumping according to certain rules. A permutation on $n$ objects gives a box-ball system state by assigning its one-line notation to $n$ consecutive boxes. After a finite number of steps, a box-ball system will reach a steady state. From any steady state, we can construct a tableau called the soliton decomposition of the box-ball system. We prove that if the soliton decomposition of a permutation $w$ is a standard tableau or if its shape coincides with the Robinson--Schensted (RS) partition of $w$, then the soliton decomposition of $w$ and the RS insertion tableau of $w$ are equal. We also use row reading words, Knuth moves, RS recording tableaux, and a localized version of Greene's theorem (proven recently by Lewis, Lyu, Pylyavskyy, and Sen) to study various properties of a box-ball system.
\end{abstract}



\section{Introduction}

A \emph{box-ball system (BBS)} is a collection of discrete time states. 
At each state, we have an injective map from $n$ balls (labeled by the integers from $1$ to $n$) to boxes (labeled by the natural numbers); 
each box can fit at most one ball. 
The dynamics come from the balls jumping according to certain rules. 
Let $S_n$ denote the set of permutations on $[n]=\{ 1, 2, \dots, n\}$. 
A permutation $w$ in $S_n$ gives a box-ball system state by assigning the one-line notation of the permutation to $n$ consecutive boxes. 
Given a BBS state at time $t$, we compute the BBS state at time $t+1$ by applying one BBS move, which is the process of moving 
 each integer to the nearest empty box to its right, beginning with the smallest. 
See Figure~\ref{fig:intro:452361:t=0 to t=1}. 
This version of the box-ball system was introduced in~\cite{Takahashi93} and is an extension of the box-ball system first invented by Takahashi and Satsuma in~\cite{TS90}.
\begin{figure}[htb]
\centering
\includegraphics[width = 0.55\textwidth]{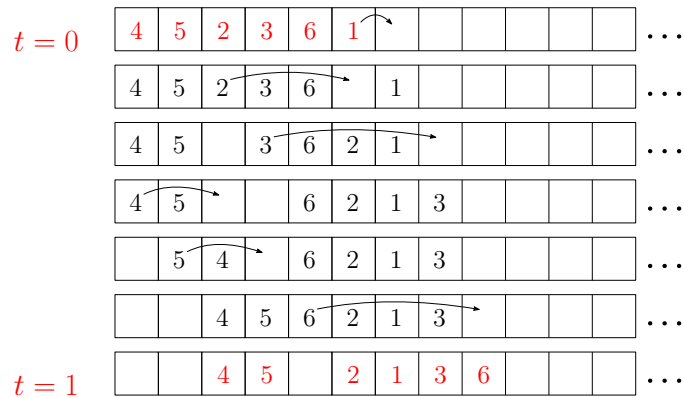}
\caption{Performing a BBS move on $w=452361$.}
\label{fig:intro:452361:t=0 to t=1}
\end{figure}

A \emph{soliton} is a maximal consecutive increasing sequence of balls which is preserved by all subsequent BBS moves. 
After a finite number of BBS moves, a box-ball system containing a configuration $w$ will reach a \emph{steady state}, 
decomposing into solitons
whose sizes are weakly decreasing from right to left, 
that is, forming an integer partition shape. 
From such a state, 
we can construct 
the \emph{soliton decomposition of the box-ball system}, denoted SD, 
by stacking solitons so that the rightmost soliton is placed on the first row, the soliton to its left is placed on the second row, and so on.
We obtain a tableau where each row is increasing but which may or may not be standard. The \emph{soliton decomposition of a permutation} $w$ is the soliton decomposition of the box-ball system containing $w$.

\begin{figure}[htb]
\centering
\includegraphics[width = 0.7\textwidth]{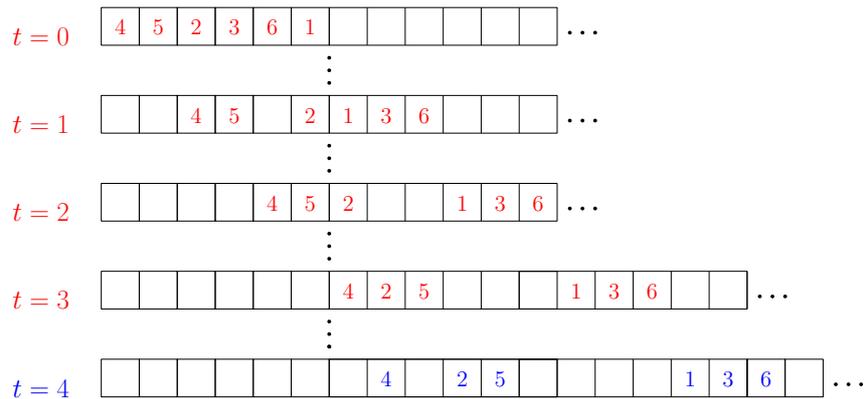}
 \caption{BBS moves starting at $w=452361$.} 
\label{fig:intro:452361:t=0 to t=4}
\end{figure}

Figure~\ref{fig:intro:452361:t=0 to t=4} shows the state of the box-ball system containing $w=452361$ from $t=0$ to~$t=4$. Note that steady state is first reached at $t=3$. 
The soliton decomposition of $w=452361$ is 
the tableau 
\[
{\small \SD{w}=
  ~
\young(136,25,4)}. 
\]
In this example, the soliton decomposition is a standard tableau, but most permutations have soliton decompositions which are not standard. 
The tableau $\SD{w}$ has shape $(3,2,1)$. 
We will refer to the shape of the soliton decomposition as the \emph{BBS soliton partition}.

The well-known Robinson--Schensted (RS) insertion algorithm is 
a bijection 
\[{w \mapsto (\P{w},\Q{w})}\]
from $S_n$ onto pairs of standard size-$n$ tableaux of the same shape~\cite{Sch61}. 
The tableau $\P{w}$ is called the \emph{insertion tableau} of $w$, and the tableau $\Q{w}$ is called the \emph{recording tableau} of $w$. The shape of these tableaux is called the \emph{RS partition of $w$}.

The \emph{row reading word} of a 
 tableau is the permutation formed by concatenating the rows of the tableau from bottom to top, left to right. 
\begin{equation}
\label{eq:if r is reading word of T then P(r)=T}
\text{If $r$ is the row reading word of a standard tableau $T$, then $\P{r}=T$.} 
\end{equation}
For example, 
if $w=452361$, then 
\[
\P{w}=\young(136,25,4), 
~~
\Q{w}=\young(125,34,6).
\]
The tableau {$\P{w}$} 
has row reading word $r=425136$. 
The insertion tableau of $r$ is the ta\-bleau~$\P{w}$. 
For more information, see for example the textbook~\cite[Section~7.5]{Sag20}.

The \emph{carrier algorithm} given in~\cite{fukuda04} (which we review in Section~\ref{sec:carrier}) is a way to transform a box-ball configuration at time $t$ into the configuration at time $t+1$. At each step in the algorithm, we insert and bump numbers in and out of a \emph{carrier} filled with a weakly increasing sequence, following a rule which should remind the reader of the RS insertion algorithm. 

Our goal is to study the connection between the soliton decompositions
and RS tableaux of permutations. 
We now describe our main results.

\subsection{Insertion tableaux and soliton decompositions}

For the permutation $w=452361$ used in the above example, we have $\SD{w}=\P{w}$. 
However, in general the soliton decomposition and the RS insertion tableau of a permutation do \emph{not} coincide. 
Surprisingly, having a standard soliton decomposition tableau  
or 
having a BBS soliton partition 
which equals the RS partition 
is enough to guarantee that the soliton decomposition and the RS insertion tableau coincide. 

\begin{thmIntro}[Theorem~\ref{thm:tfae}]\label{thmintro:tfae}
Suppose $w$ is a permutation. 
Then 
the following are equivalent:
\begin{enumerate}
\item 
$\SD{w} = \P{w}.$
\item 
$\SD{w}$ is a standard tableau.
\item 
The shape of $\SD{w}$ equals the shape of $\P{w}$.
\end{enumerate}
\end{thmIntro}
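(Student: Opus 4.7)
The plan is to show $(1){\Rightarrow}(2)$, $(1){\Rightarrow}(3)$, $(2){\Rightarrow}(1)$, and $(3){\Rightarrow}(2)$, which together yield the equivalence. The first two are immediate from the definitions: insertion tableaux are always standard, and equal tableaux have equal shapes.

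For the two converses I would use a lemma that I expect to be established earlier in the paper (from the carrier algorithm together with Knuth moves): the row reading word $r$ of $\SD{w}$ is Knuth equivalent to $w$, so that $\P{r}=\P{w}$. Granting this, $(2){\Rightarrow}(1)$ is quick: if $\SD{w}$ is standard, then \eqref{eq:if r is reading word of T then P(r)=T} gives $\P{r}=\SD{w}$, so $\SD{w}=\P{r}=\P{w}$.

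The heart of the argument is $(3){\Rightarrow}(2)$, for which I would prove a standalone claim: for any tableau $T$ of partition shape $\mu$ with strictly increasing rows, if $\lambda$ is the shape of $\P{r}$ (where $r$ is the row reading word of $T$), then $\lambda$ dominates $\mu$, with equality if and only if $T$ is standard. Dominance is immediate from Greene's theorem since the top $j$ rows of $T$ provide $j$ disjoint increasing subsequences of $r$ of total length $\mu_1+\cdots+\mu_j$. For the strict-inequality half, suppose $T$ has a column inversion $T_{i,c}>T_{i+1,c}$; then the concatenation
\[
T_{i+1,1},\,T_{i+1,2},\,\dots,\,T_{i+1,c},\,T_{i,c},\,T_{i,c+1},\,\dots,\,T_{i,\mu_i}
\]
is an increasing subsequence of $r$ of length $\mu_i+1$, and combining it with rows $1,\dots,i-1$ of $T$ gives $i$ pairwise disjoint increasing subsequences of total length $\mu_1+\cdots+\mu_i+1$, so by Greene's theorem $\lambda_1+\cdots+\lambda_i>\mu_1+\cdots+\mu_i$. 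Applying the claim to $T=\SD{w}$ and using $\mathrm{shape}(\P{r})=\mathrm{shape}(\P{w})$, hypothesis (3) becomes $\lambda=\mu$, which forces $\SD{w}$ to be standard.

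The main obstacle will be verifying the splicing construction: one must check simultaneously that the spliced sequence is increasing in values (using the strict column inversion together with the strictly increasing rows of $T$) and increasing in position within $r$ (using that rows of $T$ are read bottom-to-top, so the row-$(i+1)$ entries precede the row-$i$ entries). Everything else is bookkeeping via Greene's theorem and the Knuth-equivalence lemma.
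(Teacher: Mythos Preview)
Your proposal is correct and matches the paper's proof almost exactly: the paper uses the same Corollary $\P{r}=\P{w}$ (derived from Fukuda's carrier-algorithm invariance) for $(2)\Rightarrow(1)$, and the same splicing construction (attributed to Grinberg) for $(3)\Rightarrow(2)$, building the $i$-th increasing subsequence by walking along row $i{+}1$ up to column $c$, jumping to row $i$, and finishing row $i$. Your additional framing of the claim as a dominance statement with an ``if and only if'' is a slight strengthening, but the argument and the key construction are identical.
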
 
The key ingredients of our proof are Greene's theorem (Theorem~\ref{thm:Greene's theorem}) and a result of Fukuda which says that the RS insertion tableau is an invariant of a box-ball system (Theorem~\ref{thm:preserved}).
The proof that part~(\ref{tfae:itm:three}) implies part~(\ref{tfae:itm:two}) was suggested to us by Darij Grinberg. 

\subsection{Tableau reading words}
We study the connection between steady-state configurations and row reading words. 

\begin{propIntro}[Proposition~\ref{prop:t=0}] 
\label{propintro: forwards t = 0}
A permutation $r$ is in steady state if and only if $r$ is the row reading word of a standard tableau. 
\end{propIntro}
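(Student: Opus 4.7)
The plan is to prove the two directions of Proposition~\ref{prop:t=0} separately, exploiting the fact that row reading words of standard tableaux factor naturally into strictly increasing runs whose sizes form a partition.

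For the reverse direction, suppose $r$ is the row reading word of a standard tableau $T$ of shape $\lambda = (\lambda_1, \ldots, \lambda_k)$, so $r = R_k R_{k-1} \cdots R_1$, where $R_i$ is the $i$-th row of $T$ read left to right. I would first verify that the $R_i$ are exactly the maximal consecutive increasing subwords of $r$: each $R_i$ is increasing by definition of a tableau, and at the boundary from $R_{i+1}$ to $R_i$ in $r$ the column-strictness of $T$ yields a descent via $T(i,1) < T(i+1,1) \le T(i+1,\lambda_{i+1})$. Hence the increasing runs of $r$ have lengths $\lambda_k, \lambda_{k-1}, \ldots, \lambda_1$ read left to right, that is, weakly decreasing right to left. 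To conclude that $r$ is in steady state, I would argue that these runs are preserved under every subsequent BBS move. The key dynamical observation is that the largest run sits at the right end of $r$ and each run translates rightward by its own size per BBS step, so the gaps only grow from zero and no two runs ever collide. I would make this precise using the carrier algorithm reviewed in Section~\ref{sec:carrier}.

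For the forward direction, suppose $r$ is in steady state. By definition, $r$ decomposes as $r = s_k s_{k-1} \cdots s_1$, where the $s_i$ are the solitons of $r$ with lengths $\mu_1 \ge \cdots \ge \mu_k$. Then $\SD{r}$ has rows $s_1, \ldots, s_k$ (top to bottom), and by construction its row reading word is $r$. What remains is to show that $\SD{r}$ is standard. Here I would invoke Theorem~\ref{thm:tfae}: by the equivalence of parts (2) and (3), it suffices to prove that the shape of $\SD{r}$ equals the shape of $\P{r}$. Fukuda's theorem (Theorem~\ref{thm:preserved}) gives that $\P{r}$ is invariant under BBS moves, and Greene's theorem (Theorem~\ref{thm:Greene's theorem}) computes the shape of $\P{r}$ via maximal disjoint unions of increasing subsequences. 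The lower bound $\lambda_1 + \cdots + \lambda_j \ge \mu_1 + \cdots + \mu_j$ is realized by the $j$ rightmost solitons; the matching upper bound would follow from the rigidity of the steady-state soliton structure, likely via induction on the number of solitons or a direct analysis with the carrier algorithm.

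The main obstacle I expect is this upper-bound step in the forward direction: one must rule out ``cobbling together'' longer $j$-fold disjoint unions by stealing elements from multiple solitons. Intuitively the descent between consecutive solitons is the obstruction, but turning this into a uniform counting argument valid at all levels $j$ simultaneously is where the real work lies.
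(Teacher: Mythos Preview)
Your reverse direction is essentially the paper's argument: the paper deduces Proposition~\ref{prop:t=0} as the zero-gap special case of Proposition~\ref{prop:t=0 generalization}, and the relevant half (Lemma~\ref{lem:separation condition suppose two conditions}) is exactly a carrier-algorithm computation showing that when the increasing runs form a standard tableau of partition shape, each run passes intact through the carrier and the gaps weakly grow. One caution about your informal description: the sentence ``the gaps only grow from zero and no two runs ever collide'' is misleading, since at $t=0$ the runs \emph{are} adjacent. What actually keeps them from mixing under the first move is not the size ordering but the column-strictness of $T$ (this is precisely hypothesis~\eqref{lem:separation condition suppose two conditions:2} in Lemma~\ref{lem:separation condition suppose two conditions}). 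Your carrier computation would reveal this, but it is worth saying explicitly.

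For the forward direction you take a genuinely different route from the paper, and the step you flag as the obstacle is a real gap. You propose to show $\sh\SD{r}=\sh\P{r}$ and then cite Theorem~\ref{thm:tfae}. The inequality $\mu_1+\cdots+\mu_j\le \incr_j(r)$ is free, but the reverse inequality $\incr_j(r)\le \mu_1+\cdots+\mu_j$ is exactly the assertion that no $j$-increasing subsequence can beat the $j$ largest solitons. Nothing in the hypotheses you have invoked so far (Fukuda invariance, Greene) gives this; you need some structural fact about how adjacent solitons interlace, and the natural way to extract that fact from the steady-state hypothesis is the carrier algorithm. The paper does precisely this in Lemma~\ref{lem:separation condition suppose steady state}: running the carrier through a steady-state configuration forces $R_i<L_{g+i}$ for adjacent solitons, which for a permutation ($g=0$) says the columns of $\SD{r}$ are increasing. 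But notice that this carrier argument already yields ``$\SD{r}$ is standard'' \emph{directly}, so once you carry it out there is no need to pass through Greene's theorem or Theorem~\ref{thm:tfae} at all. In other words, the option you list last (``a direct analysis with the carrier algorithm'') is both how you would have to close your gap and how the paper proves the whole forward direction in one stroke; the Greene-theoretic framing is an unnecessary detour.
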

 
Next, we represent a box-ball system state as an array containing integers from $1$ to $n$ called the \emph{configuration array}. This array has increasing rows but not necessarily increasing columns; it also may not have a valid skew shape and it may be disconnected. 
Proposition~\ref{propintro: forwards t = 0} turns out to be a special case of the following. 

\begin{propIntro}
[Proposition~\ref{prop:t=0 generalization}]
\label{propintro: t=0 generalization} 
A BBS configuration $w$ is in steady state if and only if the configuration array of $w$ is 
a standard skew tableau whose rows are weakly decreasing in length. 
\end{propIntro}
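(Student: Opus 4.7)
The plan is to prove both directions of the equivalence, relying on the definition of the configuration array and leveraging Proposition~\ref{prop:t=0} for the harder direction.

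For the forward direction, I would assume $w$ is in steady state. By the definition of steady state, the BBS state decomposes into solitons whose sizes are weakly decreasing from right to left, with sufficient spatial separation to prevent interaction under subsequent BBS moves. The configuration array records these solitons as its rows, with the rightmost soliton placed as the top row and each ball placed at its current BBS column position. I would then verify the four required properties: that each row is increasing because a soliton is by definition an increasing sequence; that the row lengths are weakly decreasing by the steady state condition; that the shape is a valid skew tableau because distinct solitons occupy disjoint column ranges with left and right endpoints strictly decreasing from top to bottom, making both the outer and inner shapes bona fide partitions; and that the column-strict condition is satisfied vacuously since no two balls share a BBS column.

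For the reverse direction, I would assume the configuration array of $w$ is a standard skew tableau with weakly decreasing row lengths, and reduce to Proposition~\ref{prop:t=0}. The row reading word of the configuration array, obtained by reading rows from bottom to top and left to right within each row, is precisely the permutation obtained by reading the balls of the BBS state from left to right. By left-aligning the rows into a straight tableau of partition shape given by the row lengths, the standard skew condition together with the weakly decreasing row length condition should force the left-aligned straight tableau to itself be standard; Proposition~\ref{prop:t=0} then gives that the underlying permutation is in steady state. A concluding step would verify that the gap structure of $w$, encoded in the skew offsets, is compatible with the steady-state soliton arrangement, so that $w$ itself is in steady state.

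The main obstacle is in the reverse direction, specifically the transfer of standardness from the skew tableau, where columns correspond to BBS positions and the column-strict condition may hold vacuously, to the left-aligned straight tableau, where the column-strict condition becomes nontrivial. This step requires carefully exploiting how the ball values and the BBS column positions combine under the standard skew hypothesis, and this is where the weakly decreasing row length condition is essential: without it, left-aligning a standard skew tableau need not produce a standard straight tableau.
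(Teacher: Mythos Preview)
Your proposal has genuine gaps in both directions, stemming from a misreading of the configuration array.

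In the forward direction you assume that the configuration array places each ball in the column given by its BBS box position, so that distinct solitons occupy disjoint column ranges and column-strictness holds vacuously. That is not the paper's definition. The rule is that each successive row below is shifted $g$ cells to the left of the row above, where $g$ is the number of empty boxes between the two runs; equivalently, the leftmost column of the lower row equals the leftmost column of the upper row minus $g$. Under this rule columns genuinely overlap: at $t=1$ in Example~\ref{ex:prop:t=0 generalization:5623714} the column containing $1$ also contains $2$ and $6$. Column-strictness is then precisely the separation inequality $R_i < L_{g+i}$ of Lemma~\ref{lem:separation condition suppose steady state}, and deducing it from steady state requires real work (the paper does it via the carrier algorithm).

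In the reverse direction your plan to left-align and invoke Proposition~\ref{prop:t=0} cannot succeed, because a standard skew tableau with weakly decreasing row lengths need not left-align to a standard straight tableau. Take the BBS state $2\,3\,e\,e\,e\,1\,4\,5$. Its configuration array is
\[
\young(:::145,23),
\]
a standard (disconnected) skew tableau with row lengths $3\ge 2$; since the gap $g=3$ is at least the length of the left block, column-strictness is vacuous here, and this configuration is in steady state (by Lemma~\ref{lem:separation condition suppose two conditions}, or by applying one BBS move directly). But the left-aligned array $\young(145,23)$ has second column $4,3$ and is not standard; correspondingly its reading word $23145$ is \emph{not} in steady state. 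So the step you flagged as the ``main obstacle'' is not merely unproved but false, and no reduction to Proposition~\ref{prop:t=0} along these lines can work. The paper instead proves this direction directly with the carrier algorithm (Lemma~\ref{lem:separation condition suppose two conditions}), showing that each increasing run survives one BBS move intact with the gaps only growing.
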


We will prove Proposition~\ref{propintro: t=0 generalization} in Section~\ref{sec:reading words and steady states} using the carrier algorithm. Note that Proposition~\ref{propintro: t=0 generalization} is a corollary of a characterization for steady state given by Lewis, Lyu, Pylyavskyy, and Sen in~\cite[Proof of Lemma 2.1 and 2.3]{LLPS19}.

\subsection{Recording tableaux and time to steady state}

We also study the relationship between the RS recording tableau of a permutation and the behavior of its box-ball system. 
The number of BBS moves required for a permutation $w$ to reach 
 steady state is called the \emph{steady-state time} of $w$.
For example, 
as illustrated in Figure~\ref{fig:intro:452361:t=0 to t=4}, 
the steady-state time of the permutation $452361$ is $3$.

\begin{thmIntro}[Theorem~\ref{thm:n minus 3}]\label{thmintro:n minus 3} 
If $n \geq 5$, 
let 
\[\Qmaxtime
\coloneqq
\begin{ytableau}
 1 & 2 &\none[\hdots]&\scalebox{.83}{$n$-$2$}&\scalebox{.83}{$n$-$1$}\\
 3 & 4\\
 n
 \end{ytableau}.
 \]
If $\Q{w}=\Qmaxtime$, 
 then $w$ first reaches steady state at time $n-3$. 
\end{thmIntro}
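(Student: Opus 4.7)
The plan is to decode the recording tableau $\widehat{Q}$ into structural constraints on $w$, identify the target steady-state configuration via Fukuda's invariance of $P$, and count the required BBS moves.

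First, reading off the cells of $\widehat{Q}$ labeled $1, 2, \ldots, n$ in the order they are created during RS insertion forces the one-line notation to satisfy $w_3 < w_1 < w_2$, $w_3 < w_4 < w_2$, and $w_4 < w_5 < \cdots < w_{n-1}$, with $w_n$ a ``trigger'' whose insertion initiates a two-level bump chain terminating in a new cell of row~$3$. These conditions pin down $w$ up to a handful of subcases determined by where $w_n$ falls in the sorted order of the other entries. By Theorem~\ref{thm:preserved} (Fukuda's invariance of the insertion tableau $P$) together with Proposition~\ref{prop:t=0} (a state is steady iff it is the row reading word of an SYT), the system is in steady state at time $t$ precisely when the time-$t$ configuration, read left to right and ignoring empty boxes, coincides with the row reading word $r$ of $P(w)$. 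So the theorem reduces to showing that the smallest $t$ with $w^{(t)} = r$ is $t = n - 3$.

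For the upper bound, I would induct on the length $n-5$ of the increasing filler run $w_5 < w_6 < \cdots < w_{n-1}$, with the base case $n = 5$ a finite verification of the five permutations satisfying $Q(w) = \widehat{Q}$ (of shape $(2,2,1)$). The inductive step would exploit that this filler run travels as a rigid block aligned with the largest soliton, so appending one extra filler letter postpones steady state by exactly one further BBS move; concretely, one carrier-algorithm pass should reduce a length-$(n-5)$ filler case to a length-$(n-6)$ filler case. For the lower bound, I would track the ball destined for the row-$3$ cell of $P(w)$: at time $0$ it sits $n-3$ positions to the right of its eventual leftmost steady-state position, and a single BBS move shifts it at most one position to the left relative to the size-$2$ soliton, so steady state cannot be reached before time $n-3$. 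Equivalently, Proposition~\ref{propintro: t=0 generalization} can be applied at each $t < n - 3$ to exhibit an explicit adjacent pair of balls violating the required standard skew-tableau condition.

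The main obstacle I anticipate is the case analysis created by the trigger $w_n$: different values of $w_n$ produce different $P(w)$ and hence different target words $r$, and each case slightly changes the trace of the BBS dynamics. A clean uniform proof will likely require either a single Greene-type invariant (perhaps extractable from the localized Greene theorem of Lewis--Lyu--Pylyavskyy--Sen cited in the paper) that decreases by exactly one per BBS move, or an induction robust enough to handle all subcases simultaneously.
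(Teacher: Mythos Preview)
Your structural reading of $\widehat{Q}$ is correct and matches the paper's Lemma~\ref{lem:inverse RS}. But two of your main steps have real problems, and your overall route diverges from the paper's.

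First, the characterization ``steady state at time $t$ iff the time-$t$ word (ignoring empties) equals the row reading word of $P(w)$'' is not what Proposition~\ref{prop:t=0} gives you: that proposition is about a permutation with no gaps. For a time-$t$ configuration you need Proposition~\ref{prop:t=0 generalization}, and even then the row reading word you recover is that of $\SD{w}$, not of $\P{w}$. You would first have to prove $\SD{w}=\P{w}$ for every $w\in\SnQmaxtime$, and that is essentially the conclusion you are trying to reach, not an input.

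Second, the inductive step ``one carrier pass reduces a length-$(n-5)$ filler case to a length-$(n-6)$ filler case'' is not how BBS behaves: a move does not delete a ball, and there is no obvious map from the time-$1$ state in $S_n$ to a time-$0$ state in $S_{n-1}(\widehat{Q})$. Likewise, in your lower bound the row-$3$ ball moves \emph{right}, not left; a relative-position argument can be made to work but only after you know the three blocks are already separated and traveling at constant speeds, which you have not yet established at $t=0$.

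The paper avoids all of this by a direct computation rather than induction. Using Lemma~\ref{lem: these five are the only cases} it splits into five cases and shows in each that the $t=1$ configuration has the uniform form
\[
e\,e\,\underbrace{w_1\,w_2}_{\text{block}}\;\overbrace{e\cdots e}^{n-5}\;x\;\underbrace{1\,y_1\cdots y_{n-4}}_{\text{block}},
\]
with $x<w_2$ established via Lemma~\ref{lem:w is not the union of increasing subsequences}. From this point the three pieces move at speeds $2$, $1$, and $n-3$ respectively, so the $w_2$--$x$ gap shrinks by exactly one per move; it hits zero at $t=n-4$ (not yet steady) and the blocks merge into a standard skew configuration at $t=n-3$, which Proposition~\ref{prop:t=0 generalization} certifies as steady. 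The case analysis you anticipated is absorbed entirely into verifying the $t=1$ picture; after that the argument is uniform and requires no induction or Greene-type invariant.
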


This particular recording tableau is special; 
we conjecture that all other permutations in $S_n$ have  steady-state time smaller than $n-3$. 

\begin{conjecture}
\label{conj: n minus 3}
A permutation in $S_n$ 
whose recording tableau is not equal to $\Qmaxtime$
has steady-state time smaller than $n-3$.
\end{conjecture}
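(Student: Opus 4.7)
The plan is to prove the matching upper bound to Theorem~\ref{thmintro:n minus 3}: that every $w \in S_n$ has steady-state time at most $n-3$, with equality only when $Q(w) = \Qmaxtime$. I would proceed in three stages. First, using the carrier algorithm of Section~\ref{sec:carrier}, I would derive an upper bound on the steady-state time of $w$ in terms of its BBS soliton partition $\lambdaBBS(w) = (\lambdaBBS_1, \ldots, \lambdaBBS_k)$ and the initial separations between solitons. A careful accounting of how quickly the longest soliton overtakes the shorter ones should yield a bound that saturates only when $\lambdaBBS(w) = (n-3, 2, 1)$ and when the smaller solitons sit in the tightest possible configuration behind the longest one.

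Second, assuming the steady-state time of $w$ equals $n-3$, I would argue that $\SD{w}$ must in fact be a standard tableau: the BBS move immediately preceding steady state has to force a specific interleaving of the soliton entries in order to saturate the bound. By Theorem~\ref{thmintro:tfae}, it then follows that $\SD{w} = \P{w}$, and in particular the shape of $Q(w)$ is $(n-3, 2, 1)$. Third, among the standard Young tableaux of shape $(n-3, 2, 1)$, I would show that only $\Qmaxtime$ can appear as the recording tableau of such a maximum-time $w$. Here the localized Greene's theorem of Lewis--Lyu--Pylyavskyy--Sen should relate the placement of each entry of $Q(w)$ to the position at time $0$ of the corresponding ball, forcing the small entries $3,4$ into row two and $n$ into row three at the latest possible times.

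The step I view as the principal obstacle is the first: the bounds on steady-state time currently in the literature (for example, $\lambdaBBS_1 - 1$) are not sharp enough to isolate the shape $(n-3, 2, 1)$ among partitions of $n$, let alone to distinguish $\Qmaxtime$ from other standard tableaux of that shape. Making the bound sharp will likely require tracking not only the soliton sizes but the precise configuration of the smaller solitons relative to the largest, by an explicit carrier-algorithm computation propagated through all $n-3$ BBS moves and then combined with Fukuda's invariance (Theorem~\ref{thm:preserved}) to transfer the constraints to $\P{w}$ and hence to $Q(w)$.
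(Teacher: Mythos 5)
There is a genuine gap --- in fact the statement you are addressing is an open conjecture in this paper (Conjecture~\ref{conj: n minus 3}); the paper offers no proof of it, and your proposal does not close it either. What you have written is a three-stage plan whose first stage you yourself identify as an unresolved ``principal obstacle,'' so there is no argument to check: the claimed sharp upper bound on steady-state time in terms of the soliton partition and ``initial separations'' is precisely the missing mathematics. Beyond being incomplete, that first stage has a conceptual problem: at time $0$ the permutation occupies $n$ consecutive boxes, so there are no separations between solitons to feed into your bound --- the solitons are only defined as the increasing runs that eventually stabilize, and identifying them from the time-$0$ data is essentially as hard as running the system. Moreover, steady-state time is not a function of the soliton partition together with any time-$0$ separation data of the kind you describe: in Figure~\ref{fig:knuth shape 2211}, the permutations $362541$ and $365214$ have the same insertion tableau and the same soliton shape $(2,2,1,1)$, yet steady-state times $2$ and $1$ respectively. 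The paper's Conjecture~\ref{conj: same Q implies same t} (proved only in the sequel~\cite{sumry21}) asserts that the correct invariant controlling steady-state time is the recording tableau $\Q{w}$, which is invisible to Greene-type statistics.

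Your stages $2$ and $3$ also assert more than they justify. In stage $2$, the claim that steady-state time $n-3$ forces $\SD{w}$ to be standard is stated without an argument (``has to force a specific interleaving'' is not a proof), and even granting it, Theorem~\ref{thm:tfae} only yields $\sh\Q{w}=(n-3,2,1)$; there are many standard tableaux of that shape besides $\Qmaxtime$, and the conjecture requires every one of them to give time strictly less than $n-3$. In stage $3$ you propose to use the localized Greene's theorem to ``relate the placement of each entry of $\Q{w}$ to the position at time $0$ of the corresponding ball,'' but Lemma~\ref{lem:local Greene's theorem} computes only the shape $\sh\SD{w}$; it carries no information about the recording tableau, which is exactly the object the conjecture is about. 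To make progress you would more plausibly mimic the paper's proof of Theorem~\ref{thm:n minus 3}: use the inverse RS algorithm to extract structural constraints on $w$ from an arbitrary recording tableau $Q\neq\Qmaxtime$ of shape $(n-3,2,1)$ (as Lemma~\ref{lem:inverse RS} does for $\Qmaxtime$), and then track the carrier algorithm explicitly --- or else first establish Conjecture~\ref{conj: same Q implies same t} and reduce to one representative per recording tableau.
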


Furthermore, we conjecture that Theorem~\ref{thmintro:n minus 3} is a special case of the following general phenomenon. 

\begin{conjecture}
\label{conj: same Q implies same t}
If two permutations $\pi$ and $w$ are such that $\Q{\pi}=\Q{w}$, then $\pi$ and $w$ have the same steady-state time.
\end{conjecture}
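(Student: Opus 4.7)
The plan is to reduce the conjecture to showing that one BBS step is compatible with dual Knuth equivalence. Recall that two permutations $v$ and $w$ satisfy $\Q{v}=\Q{w}$ if and only if they are dual Knuth equivalent, equivalently, $v^{-1}$ and $w^{-1}$ are Knuth equivalent. Dual Knuth equivalence is generated by elementary dual Knuth moves, each of which swaps two of the three consecutive values $\{i,i+1,i+2\}$ in the one-line notation depending on the position of the third. Hence it suffices to prove the conjecture under the hypothesis that $v$ is obtained from $w$ by a single such elementary move, and then iterate.

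The main step would be to compare one BBS step applied to $w$ and to $v$, aiming to show that the outputs $w_1$ and $v_1$ remain dual Knuth equivalent. Once established, iteration yields $\Q{w_t}=\Q{v_t}$ for all $t\ge 0$, so the two trajectories of recording tableaux coincide. Together with Fukuda's invariance of the insertion tableau (Theorem~\ref{thm:preserved}) and Proposition~\ref{prop:t=0} (which characterizes steady state as being the row reading word of a standard tableau), we deduce that $w_t$ is in steady state iff $\Q{w_t}$ equals the distinguished recording tableau $Q_\lambda$ of the row reading word of $\P{w}$. A direct calculation with RS insertion on a row reading word shows that $Q_\lambda$ depends only on the shape $\lambda$ of $\P{w}$. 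Since $\Q{w}$ and $\Q{v}$ have the same shape and the trajectories coincide, $v$ and $w$ reach steady state simultaneously.

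The main obstacle is verifying the compatibility of BBS with a single elementary dual Knuth move. The carrier algorithm (reviewed in Section~\ref{sec:carrier}) processes the configuration from left to right, so a dual Knuth swap is a localized change affecting the carrier only at a few consecutive positions; one would perform a case analysis over the types of dual Knuth moves and the local configuration around the swapped values, aiming to show that the post-BBS configurations differ by a short sequence of elementary dual Knuth moves. The localized Greene's theorem of Lewis--Lyu--Pylyavskyy--Sen (used elsewhere in this paper) may streamline this by providing local invariants of subsequence-length type preserved under BBS.

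A complementary strategy is to seek an explicit closed-form formula for the steady-state time as a statistic of $Q$. Theorem~\ref{thmintro:n minus 3} identifies the unique $Q$ achieving the conjectured maximum value $n-3$, and its proof may suggest a candidate statistic. Either route hinges on the same nontrivial interaction between the BBS dynamics on ball positions and the RS procedure that defines $Q$, and this interaction is the principal difficulty.
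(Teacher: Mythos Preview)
The paper does not prove this statement: it is explicitly labeled a \emph{conjecture} and the text says it ``is proven in a sequel to this paper~\cite{sumry21}.'' So there is no proof here to compare your proposal against.

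That said, your proposed strategy has a genuine structural gap. The BBS step is not a map from permutations to permutations; it is a map from configurations (with empty boxes) to configurations. After one move the state has gaps, and the next move depends on those gaps, not just on the underlying permutation obtained by deleting the $e$'s. Example~\ref{ex:thm:preserved:452361} already shows this: starting from $452361$, the underlying permutations at times $t=1$ and $t=2$ are both $452136$, yet the configurations differ and further evolution depends on which one you are at. So even if you verify that one BBS move sends dual-Knuth-equivalent permutations to configurations whose underlying permutations are again dual Knuth equivalent, you cannot simply ``iterate'': you would need to track and compare the full gap structures as well.

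Your proposed detection criterion for steady state is also not correct as stated. You write that $w_t$ is in steady state iff $\Q{w_t}$ equals the recording tableau of the row reading word of $\P{w}$. But Proposition~\ref{prop:t=0} characterizes steady state only for gap-free configurations; for configurations with gaps the relevant statement is Proposition~\ref{prop:t=0 generalization}, and the steady-state underlying permutation is the row reading word of $\SD{w}$, not of $\P{w}$. When $\SD{w}\neq\P{w}$ (as in Example~\ref{ex:cor:P(w)=P(r):5623714}), $\SD{w}$ is nonstandard and its row reading word does not have recording tableau $Q_\lambda$ in your sense. Any approach along your lines would need to control both the gap data and the possibility that the soliton decomposition is nonstandard; neither is addressed in the sketch.
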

Conjecture~\ref{conj: same Q implies same t} is proven in a sequel to this paper~\cite{sumry21}. 

\begin{remark}
Conjecture~\ref{conj: same Q implies same t} would simplify the proof of Theorem~\ref{thmintro:n minus 3}; it would simply require demonstrating that one single permutation whose recording tableau is $\Qmaxtime$ has steady-state \linebreak time~${n-3}$. 
\end{remark}

\subsection{Types of Knuth moves}

The RS insertion tableau is preserved under any Knuth move~\cite{Knuth70}. 
In contrast, the soliton decomposition is only preserved under certain types of Knuth moves. 

\begin{definition}[Knuth Moves]\label{def: Knuth moves}
Suppose $\pi$, $w \in S_n$ and $x<y<z$.

\begin{enumerate}
\item We say that 
$\pi$ and $w$ differ by a Knuth relation of the \emph{first kind} ($K_1$) if 
\[
\pi=\pi_1 \dots \textcolor{black}{y}\textcolor{black}{x}\textcolor{black}{z} 
\dots \pi_n
~ \text{and} ~
w=\pi_1 \dots \textcolor{black}{y}\textcolor{black}{z}\textcolor{black}{x} \dots \pi_n
\text{ or 
vice versa.}
\]
        
\item We say that 
$\pi$ and $w$ differ by a Knuth relation of the \emph{second kind} ($K_2$) if 
\[
\pi=\pi_1 \dots \textcolor{black}{x}\textcolor{black}{z}\textcolor{black}{y} \dots \pi_n
~
\text{and}
~
w=\pi_1 \dots \textcolor{black}{z}\textcolor{black}{x}\textcolor{black}{y} \dots \pi_n
\text{ or 
vice versa.}
\]
\end{enumerate}
In addition, 
We say that 
$\pi$ and $w$ differ by a Knuth relation of \emph{both kinds} ($K_B$) if 
they differ by a Knuth relation of the first kind ($K_1$) and of the second kind ($K_2$), that is, 
\[
\pi=\pi_1 \dots \textcolor{black}{y_1}\textcolor{black}{x}\textcolor{black}{z}\textcolor{black}{y_2} \dots \pi_n ~ \text{and} ~ w=\pi_1 \dots \textcolor{black}{y_1}\textcolor{black}{z}\textcolor{black}{x}\textcolor{black}{y_2} \dots \pi_n
\text{ or vice versa}
\]
where 
 $x<y_1<z$ and $x<y_2<z$.
 
Note that, when we apply a $K_1$ move (respectively, a $K_2$ move), the move may or may not be a $K_B$ move. 
If we apply a $K_B$ move, then it is both a $K_1$ move and a $K_2$ move.

A \emph{proper $K_1$ move} is a $K_1$ move which is not $K_B$, and a \emph{proper $K_2$ move} is a $K_2$ move which is not $K_B$.

When performing a Knuth move, if we replace an ``$xz$'' pattern with a ``$zx$'' pattern, we denote this with a superscript ``$+$.'' 
Otherwise, if we replace a ``$zx$'' pattern with an ``$xz$'' pattern, we denote this with a superscript ``$-$.'' For example, 
if $x< y_1 <z$ and $x< y_2 <z$, 
the move~$y_1 x z y_2\mapsto y_1 z x y_2$ is denoted $K_B^+.$ 

We say that $\pi$ and $w$ are \emph{Knuth equivalent} if 
they differ by a finite sequence of Knuth relations. 
\end{definition}

Using the localized version of Greene's Theorem 
given in Section~\ref{sec:local Greene's theorem}, 
we prove a partial characterization of 
the BBS soliton partition
in terms of types of Knuth moves.

\begin{thmIntro}
[Theorem~\ref{thm:knuth paths}]
\label{thmintro: knuth paths}
If $\pi$ and $w$ are related by a sequence of Knuth moves containing an odd number of $K_B$ moves, then $ \SD{\pi}
\neq
\SD{w}$. 
If $\pi$ and $w$ are related by a sequence of non-$K_B$ Knuth moves, then $\sh\SD{\pi}=
\sh\SD{w}$. 
\end{thmIntro}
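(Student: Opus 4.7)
The strategy is to combine the localized Greene's theorem from Section~\ref{sec:local Greene's theorem} with a case analysis on the three flavors of Knuth moves (proper $K_1$, proper $K_2$, and $K_B$). Classical Knuth theory preserves $\P{w}$, but this is not enough here: the object we are tracking is $\SD{w}$, which need not coincide with $\P{w}$, so the analogous statements for $\SD{w}$ must be recovered directly from the local Greene statistics.

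For the second assertion, that a sequence of non-$K_B$ Knuth moves preserves $\sh\SD{w}$, I would adapt the standard Greene-style argument to the localized setting. A proper $K_1$ move $yxz \mapsto yzx$ has a unique middle letter $y$ (that is, $x<y<z$ and no other letter of the triple lies strictly between $x$ and $z$), so any family of local increasing subsequences optimal for the local Greene statistic on the first word can be rerouted through $y$ to give an equally good family on the second word. The analogous argument handles proper $K_2$. Hence every part of $\sh\SD{w}$ is preserved by a single proper Knuth move and, by composition, by any sequence of them.

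For the first assertion, that an odd number of $K_B$ moves forces $\SD{\pi}\neq\SD{w}$, I would exhibit a $\mathbb{Z}/2$-valued invariant $\phi$ of permutations such that (i) $\phi(w)$ depends only on $\SD{w}$, (ii) $\phi$ is constant under proper $K_1$ and $K_2$ moves, and (iii) $\phi$ flips under each $K_B$ move. The motivation for $\phi$ is exactly the step where the rerouting argument above breaks down: in the $K_B$ pattern $y_1 x z y_2 \leftrightarrow y_1 z x y_2$ there are \emph{two} candidate middle letters $y_1$ and $y_2$, and the two reroutings differ by a $\mathbb{Z}/2$ ambiguity which I would translate into an invariant of $\SD{w}$, most likely the parity of the row index of a distinguished entry.

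The main obstacle will be pinning down $\phi$ and verifying property (iii). Properties (i) and (ii) should follow from the same local-exchange analysis used for the shape argument, but for (iii) I expect to need an explicit run of the carrier algorithm (Section~\ref{sec:carrier}) on a small window around the $K_B$ swap, tracing how a single ball is shifted from one row of $\SD{w}$ to another and confirming that this shift produces the desired parity change.
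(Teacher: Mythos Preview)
Your plan for the second assertion is in the right spirit—localized Greene plus a rerouting case analysis—but you have the wrong statistic. The rerouting argument you sketch (``reroute through $y$'') is natural for families of \emph{disjoint} subsequences, which is what governs $\localdecr_k$, not $\localincr_k$: recall that $\localincr_k$ is a maximum over ways to cut $w$ into $k$ \emph{consecutive} blocks, so there is nothing to reroute. The paper accordingly proves $\localdecr_k(x)=\localdecr_k(y)$ for a single proper $K_1$ or proper $K_2$ move by exactly the kind of subsequence surgery you describe, and then invokes Lemma~\ref{lem:local Greene's theorem}. Also, your description of ``proper'' is off: every $K_1$ triple $yxz$ already has $y$ as its unique value-middle; what makes the move proper is that the letter immediately \emph{adjacent} to the triple (to the right of $z$ for $K_1$, to the left for $K_2$) does not lie strictly between $x$ and $z$. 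That adjacency condition is precisely what the case analysis uses.

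For the first assertion you are working much too hard. The invariant $\phi$ you are searching for is simply the parity of the number of descents of $w$, which by Lemma~\ref{lem:local Greene's theorem} equals $\localdecr_1(w)-1$, the height of $\sh\SD{w}$ minus one; so property~(i) is immediate. A two-line check shows that a $K_B^{\pm}$ move changes the descent count by $\mp 1$ (in $y_1xz\,y_2$ both $(y_1,x)$ and $(z,y_2)$ are descents, while in $y_1zx\,y_2$ only $(z,x)$ is), and that a proper $K_1$ or proper $K_2$ move preserves the descent count—this is exactly where the adjacency condition above enters, since it forces the boundary comparison to come out the same on both sides. No carrier algorithm, no distinguished entry, no row-index parity is needed; the paper's proof of part~\eqref{thm:knuth paths:KB} is one sentence.
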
 

We also use non-$K_B$ Knuth moves to give a family of permutations which have steady-state time~$1$.

\begin{thmIntro}[Theorem~\ref{thm:t=1}]
\label{thmintro: t=1}
Let $r$ be the row reading word of a standard tableau. 
If $w$ is a permutation 
which is related to $r$ by 
one proper $K_1$ move or one proper $K_2$ move, 
then the steady-state time of $w$ is~$1$.
\end{thmIntro}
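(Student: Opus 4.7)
The plan is to establish two bounds on the steady-state time of $w$: at least $1$ (i.e., $w$ is not in steady state) and at most $1$ (i.e., $\mathrm{BBS}(w)$ is in steady state). For the lower bound, I would argue by contradiction using Proposition~\ref{propintro: forwards t = 0} together with the invariance of the RS insertion tableau under Knuth moves. Suppose $w$ is in steady state. Then $w$ is the row reading word of some standard tableau $T'$, and equation~(\ref{eq:if r is reading word of T then P(r)=T}) gives $\P{w} = T'$. Since $w$ and $r$ are Knuth equivalent, $\P{w} = \P{r} = T$, forcing $T = T'$. Because a standard tableau uniquely determines its row reading word, $w = r$, contradicting the fact that a Knuth move alters the permutation.

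For the upper bound, I would first pin down the structure of $w$. Write $T$ with rows $R_1, \ldots, R_k$ from top to bottom, and denote the first entry of $R_j$ by $a_j$, the last by $b_j$, and (when $|R_j| \geq 2$) the second entry by $a_j'$. Descents of $r$ occur only at the row boundaries $(b_j, a_{j-1})$. A short case check rules out $K_2$ patterns in $r$: any such pattern $(x,z,y)$ must have $z=b_j$, $y=a_{j-1}$, and $x$ either equal to the second-to-last entry of $R_j$ (so $x \geq a_j > a_{j-1}$) or, when $|R_j|=1$, equal to the last entry of $R_{j+1}$ (which, since the partition shape forces $|R_{j+1}|=1$, equals $a_{j+1}>a_{j-1}$). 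In either sub-case $x > y$, so no $K_2$ pattern exists in $r$. A similar check shows that every $K_1$ pattern in $r$ is automatically proper. Hence $w$ is obtained from $r$ by swapping $a_{i-1}$ and $a_{i-1}'$ in some row $R_{i-1}$, with the $K_1$ condition $a_{i-1}' > b_i$.

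The maximal increasing runs of $w$ are then
\[R_k,\ \ldots,\ R_{i+1},\ R_i \cup \{a_{i-1}'\},\ R_{i-1} \setminus \{a_{i-1}'\},\ R_{i-2},\ \ldots,\ R_1.\]
I would simulate one BBS step on $w$ via the carrier algorithm of Section~\ref{sec:carrier}. The key claim is that as the carrier traverses the extended block $R_i \cup \{a_{i-1}'\}$, it ends up holding the stray ball $a_{i-1}'$, ferries it past the shortened block $R_{i-1} \setminus \{a_{i-1}'\}$, and reinserts it into its original row; consequently the maximal increasing runs of $\mathrm{BBS}(w)$ are set-wise $R_k, \ldots, R_1$. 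A check on the gap sizes then confirms that $\mathrm{BBS}(w)$ is in steady state. Alternatively, one can combine Theorems~\ref{thmintro:tfae} and~\ref{thmintro: knuth paths} to conclude $\SD{w} = T$, so the eventual solitons must be the rows of $T$, and only the realization of this decomposition already at time $1$ remains to be verified.

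The main obstacle is the bookkeeping: verifying precisely which ball is ferried by the carrier to its original row, and confirming that the gaps between the reconstructed rows of $\mathrm{BBS}(w)$ are wide enough to preclude subsequent soliton interactions. Once this is checked, the steady-state time of $w$ equals $1$.
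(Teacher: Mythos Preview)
Your lower-bound argument (that $w$ cannot already be in steady state) is clean and correct.

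The upper bound, however, has a genuine gap: your structural analysis covers only half of the theorem. You verify that $r$ contains no consecutive $xzy$ pattern, which rules out $K_2^+$. But a proper $K_2$ move may equally well be a $K_2^-$, applied to a $zxy$ pattern of $r$, and such patterns do occur: whenever row $i$ of $T$ has length $1$ and the row above has length at least $2$, the triple $b_i\,a_{i-1}\,a_{i-1}'$ with $a_{i-1}'<b_i$ is a $zxy$ pattern on which $K_2^-$ is proper. Your concluding sentence---``$w$ is obtained from $r$ by swapping $a_{i-1}$ and $a_{i-1}'$ with the $K_1$ condition $a_{i-1}'>b_i$''---therefore describes only the $K_1^+$ case (the swap sits inside row $i-1$). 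In the proper $K_2^-$ case the swap is between $b_i$ and $a_{i-1}$, and the increasing-run decomposition of $w$ is different from the one you wrote down. The paper treats the two cases with separate lemmas: Lemmas~\ref{lem: K_1^+ Config} and~\ref{lem: K_2^- Config} pin down the shape of $T$ in each case, and Lemmas~\ref{lem: K_1^+ ss time 1} and~\ref{lem: K_2^- ss time 1} run the carrier algorithm for each. You also never check that $K_1^-$ moves on $r$ are always $K_B$ (Lemma~\ref{lem: K_1^+ Config}, part~\eqref{lem: K_1^+ Config:K_1^-}), so the reduction of ``proper $K_1$'' to ``$K_1^+$'' is not yet justified.

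A secondary remark on your carrier sketch in the $K_1^+$ case: the carrier does not ``hold the stray ball $a_{i-1}'$'' in isolation after the extended block. After inserting $a_i$ and then $a_{i-1}'$ the carrier contains both; inserting $a_{i-1}$ then ejects $a_i$, leaving the carrier holding $\{a_{i-1},a_{i-1}'\}$---exactly the state it would reach when processing $r$ at the same point. Your conclusion (that the runs of the time-$1$ configuration are set-wise $R_1,\dots,R_k$) is correct, but the mechanism differs from ``ferrying one ball past a block,'' and the gap bookkeeping you flag as an obstacle still has to be carried out.
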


The paper is organized as follows. 
In the next two sections, we review materials in the literature that we will use to prove our results. 
First, we review Greene's theorem in Section~\ref{sec:Greene's theorem}
 and Lewis, Lyu, Pylyavskyy, and Sen's localized Greene's theorem in Section~\ref{sec:local Greene's theorem}. 
Next, we review Fukuda's carrier algorithm and its connection to the RS insertion tableaux in Section~\ref{sec:carrier}. 
In Section~\ref{sec:proof of tfae}, we prove Theorem~\ref{thmintro:tfae}. 
In Section~\ref{sec:reading words and steady states}, 
we define the configuration array and 
 use the carrier algorithm to prove 
Proposition~\ref{propintro: t=0 generalization}. 
Section~\ref{sec5} is devoted to the proof of Theorem~\ref{thmintro:n minus 3}. 
We prove the two results involving types of Knuth moves 
(Theorem~\ref{thmintro: knuth paths} and Theorem~\ref{thmintro: t=1}) 
in Section~\ref{sec:knuth}.

\section{Greene's theorem and a localized version of Greene's theorem}
\label{sec:Greene's theorem sec:local Greene's theorem}

In the 1970s, Greene showed that the RS partition of a permutation and its conjugate record the numbers of disjoint unions of increasing and decreasing sequences of the permutation, which we explain in Section~\ref{sec:Greene's theorem}. 
Lewis, Lyu, Pylyavskyy, and Sen recently showed that the BBS soliton partition of a permutation and its conjugate record a localized version of Greene's theorem statistics. 
They studied an alternate version of the box-ball system, so in Section~\ref{sec:local Greene's theorem} we reframe their result to match our box-ball convention.

\subsection{Greene's theorem and RS partition}
\label{sec:Greene's theorem}

In this section, we review 
Greene's theorem~\cite[Theorem~3.1]{Gre74}, which states that the RS partition of a permutation and its conjugate record the numbers of disjoint unions of increasing and decreasing sequences of the permutation. 
For more details, see for example Chapter 3 of the textbook~\cite{Sag01}. 

\begin{definition}[longest $k$-increasing and $k$-decreasing subsequences]
A subsequence $\sigma$ of $w$ is called \emph{k-increasing} if, as a set, it can be written as a disjoint union 
\[
\sigma = \sigma_1 \sqcup \sigma_2 \sqcup \cdots \sqcup \sigma_k
\]
where each $\sigma_i$ is an increasing subsequence of $w$. 
If each $\sigma_i$ is a decreasing subsequence of $w$, we say that $\sigma$ is \emph{k-decreasing}. 
Let 
\begin{align*}
& \incr_k(w) \text{ denote the length of a longest $k$-increasing subsequence of $w$} \\
\text{and} & \decr_k(w) \text{ denote the length of a longest $k$-decreasing subsequence of $w$.}
\end{align*}
\end{definition}

\begin{theorem}[{\cite[Theorem~3.1]{Gre74}}]
\label{thm:Greene's theorem}
Suppose $w \in S_n$. 
Let 
$\lambdaRS=(\lambdaRS_1, \lambdaRS_2, \lambdaRS_3, \dotsc)$ denote the RS partition of $w$, that is, let 
$\lambdaRS=\sh \P{w}$. 
Let $\muRS=(\muRS_1, \muRS_2, \muRS_3, \dotsc)$ denote the conjugate of $\lambdaRS$. 
Then, for any $k$,
\begin{align*}
\incr_k(w) &= \lambdaRS_1 +\lambdaRS_2 +\dotsc + \lambdaRS_k, 
\\
\decr_k(w) &= \muRS_1 +\muRS_2 +\dotsc + \muRS_k. 
\end{align*}
\end{theorem}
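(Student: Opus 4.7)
My plan is to reduce via Knuth invariance to the case of a row reading word of a standard tableau, and then bound $\incr_k$ and $\decr_k$ by a direct combinatorial argument on that canonical representative.

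First, I would show that both $\incr_k$ and $\decr_k$ are invariants of Knuth equivalence. Given $\pi$ and $w$ that differ by a single elementary Knuth move of either kind (Definition~\ref{def: Knuth moves}), I would check that a longest $k$-increasing subsequence of $\pi$ can be transformed into one of the same length in $w$ by at most replacing one of the three swapped letters within a single chain, using the strict ordering $x < y < z$; the analogous argument handles $k$-decreasing subsequences. It is then standard that every permutation $w$ is Knuth equivalent to the row reading word $r$ of $\P{w}$ (indeed, by~\eqref{eq:if r is reading word of T then P(r)=T}, $r$ has the same RS insertion tableau as $w$). By Knuth invariance, it suffices to prove the two identities for the case $w = r(T)$ with $T$ a standard tableau of shape $\lambdaRS$.

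For this case, the lower bounds are immediate: each row of $T$ contributes an increasing subsequence of $r(T)$, so the first $k$ rows form a $k$-increasing subsequence of length $\lambdaRS_1 + \cdots + \lambdaRS_k$; each column of $T$ appears in $r(T)$ in decreasing order (since bottom rows are read first), so the first $k$ columns form a $k$-decreasing subsequence of length $\muRS_1 + \cdots + \muRS_k$. For the matching upper bound on $\incr_k$, any $k$-increasing subsequence $\sigma = \sigma_1 \sqcup \cdots \sqcup \sigma_k$ of $r(T)$ can contain at most one entry per column inside each chain $\sigma_j$, since the entries of a single column of $T$ form a decreasing sequence in $r(T)$. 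Hence $\sigma$ contains at most $k$ entries from each column, giving $|\sigma| \leq \sum_c \min(k, \muRS_c) = \lambdaRS_1 + \cdots + \lambdaRS_k$, which by counting cells in the first $k$ rows of $T$ matches the lower bound. The symmetric argument, using rows in place of columns, yields $\decr_k(r(T)) \leq \muRS_1 + \cdots + \muRS_k$.

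The main obstacle will be verifying Knuth invariance of $\incr_k$ and $\decr_k$ in the first step. While invariance of $\sh \P{w}$ under Knuth moves is classical, showing directly that each individual statistic $\incr_k$ (and not merely $\incr_1$) survives a single $K_1$ or $K_2$ move requires a careful case analysis based on which of the three repositioned letters lie in the chosen subsequence and in which chain. This is where I would expect to spend most of the detailed bookkeeping.
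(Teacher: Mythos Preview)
The paper does not give its own proof of this statement: Theorem~\ref{thm:Greene's theorem} is quoted from \cite[Theorem~3.1]{Gre74} as background, followed only by Example~\ref{ex:thm:Greene's theorem:5623714}. So there is nothing in the paper to compare your argument against.

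That said, your outline is a legitimate and well-known route to Greene's theorem: reduce via Knuth invariance of $\incr_k$ and $\decr_k$ to the row reading word of a standard tableau, then bound both statistics combinatorially using the row/column structure. Your upper-bound argument for $\incr_k(r(T))$ (at most one entry per column in each increasing chain, hence at most $\min(k,\muRS_c)$ entries from column $c$) is correct, and the dual argument for $\decr_k$ works the same way with rows. You are right that the only real work is the Knuth-invariance step; the case analysis there is standard but does need care when more than one of $x,y,z$ lies in the chosen $k$-increasing family, possibly across different chains. None of this is carried out in the present paper, which simply cites the result.
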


\begin{example}
\label{ex:thm:Greene's theorem:5623714}
Let $w=5623714$. 
For short, we write $\incr_k \coloneqq \incr_k (w)$
and $\decr_k \coloneqq \decr_k (w)$. 
The longest $1$-increasing subsequences are 
\[ 
5 6 7, 
\quad 
2 3 7,
\quad 
\text{ and } \quad 
2 3 4. 
\]
The longest $2$-increasing subsequence is given by
\[ 562374 = 567 \sqcup 234 .\]
A longest $3$-increasing subsequence (among others) 
is given by
\[
5623714 = 56 \sqcup 237 \sqcup 14.
\]
Thus,
\[
\incr_1 =3, \qquad
\incr_2 =6, \qquad \text{ and } \qquad
\incr_k =7 ~ \text{ if } ~ k\geq 3. 
\]

Similarly, the longest $1$-decreasing subsequences are 
\[
521, \quad 
621, \quad 
531, \quad \text{ and } \quad 
631. 
\]
A longest $2$-decreasing subsequence 
(among others) 
is given by
\[
52714 = 
521 
\sqcup 
74. 
\]
A longest $3$-decreasing subsequence 
(among others) 
is given by
\[
5623714 =
52
\sqcup 
631
\sqcup 
74.
\]
Thus, 
\[
\decr_1 =3, \qquad \decr_2 =5, \qquad
\text{ and } 
\qquad
\decr_k = 7 ~\text{ if }~ k \geq 3. 
\]

By Theorem~\ref{thm:Greene's theorem}, 
the RS partition is equal to 
$\lambdaRS=(\incr_1,\incr_2-\incr_1, \incr_3-\incr_2)=(3,3,1)$ and the conjugate of the RS partition is $\muRS=(\decr_1,\decr_2-\decr_1, \decr_3-\decr_2)=(3,2,2)$. 
We can verify this by computing the RS tableaux 
\[
\P{w}=\young(134,267,5), 
\qquad
\Q{w}=
\young(125,347,6). 
\]
\end{example}

\subsection{Localized Greene's theorem and BBS soliton partition}
\label{sec:local Greene's theorem}
 
In~\cite[Lemma 2.1]{LLPS19} and the blog post~\cite{Lewis20Blog},  Lewis, Lyu, Pylyavskyy, and Sen presented a localized version of  Greene's theorem. 
They studied an alternate version of the box-ball system, and in this section we reframe their result to match our box-ball convention. 

\begin{definition}[A localized version of longest $k$-increasing subsequences]
\label{defn: lambda and mu}
If $u$ is a sequence, 
let $\incr(u)$ denote the length of a longest increasing subsequence of $u$. 

For $w \in S_n$ and $k\geq 1,$ we define \[
\localincr_k(w)= \max_{w=u_1\mid\cdots\mid u_k}\sum_{j=1}^k \incr(u_j), \]
where the maximum is taken over ways of writing $w$ as a concatenation $u_1 \mid \dots \mid u_k$ of consecutive subsequences. 
That is, we consider all ways to break $w$ into $k$ consecutive subsequences, sum the $\incr(u_j)$ values for each way, and let $\localincr_k(w)$ be the maximum sum. 
\end{definition}

If $u$ is a sequence of $\ell$ elements, 
an integer $m\in [\ell-1]$ is called a \emph{descent} of $u$ if $u_m > u_{m+1}$. 

\begin{definition}[A localized version of longest $k$-decreasing subsequences]
Let \[\localdecr(u) 
\coloneqq 
\begin{cases}
0 &\text{if $u$ is empty}
\\
1+|\{$descents of $u\}| & \text{otherwise}.
\end{cases}\] 
For $w \in S_n$ and $k\geq 1,$ we define 
\[ \localdecr_k(w) = \max_{w= u_1 \sqcup \cdots \sqcup u_k} \sum_{j=1}^k \localdecr(u_j),\] 
where the maximum is taken over
ways to write $w$ as the union of 
disjoint subsequences $u_j$ of $w$. 
Notice that we only require $u_1,\dots, u_k$ to be disjoint, \emph{not} consecutive, in contrast to the procedure for calculating $\localincr_k(w)$.
\end{definition}

The following lemma is a corollary of {\cite[Lemma 2.1]{LLPS19}}.

\begin{lemma}[A localized version of Greene's theorem]
\label{lem:local Greene's theorem}
Suppose that $w \in S_n$.
Let  \linebreak 
$\lambdaBBS=(\lambdaBBS_1, \lambdaBBS_2, \lambdaBBS_3, \dotsc)$ denote the BBS soliton partition of $w$, that is, let 
$\lambdaBBS=\sh \SD{w}$. 
Let $\muBBS=(\muBBS_1, \muBBS_2, \muBBS_3, \dotsc)$ denote the conjugate of $\lambdaBBS$. 
Then, for any $k$,
\begin{align*}
\localincr_k(w) &= \lambdaBBS_1 +\lambdaBBS_2 +\dotsc + \lambdaBBS_k, 
\\
\localdecr_k(w) &= \muBBS_1 +\muBBS_2 +\dotsc + \muBBS_k.
\end{align*}
\end{lemma}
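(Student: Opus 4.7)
The plan is to deduce this lemma directly from \cite[Lemma~2.1]{LLPS19} by establishing a precise dictionary between our BBS convention and theirs, rather than redoing the combinatorics from scratch. First I would pin down the involution $\iota \colon S_n \to S_n$ — most likely reversal $\iota(w_1 \cdots w_n) = w_n \cdots w_1$, possibly composed with complementation — that intertwines the two box-ball dynamics: each move in our system applied to $w$ should correspond step-by-step to a move in their system applied to $\iota(w)$. Verifying this intertwining is a direct check: one translates ``jump right to the nearest empty box'' in our setup into the mirror-image rule used by LLPS, and notes that a soliton in our sense pulls back to a soliton in theirs.

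Once the dynamical dictionary is in place, the proof reduces to checking two compatibility statements. The first is that the shape $\sh \SD{w}$ (or its conjugate, depending on the convention used for stacking rows) equals the corresponding shape in LLPS, so that $\lambdaBBS$ and $\muBBS$ are identified with the LLPS partitions. The second is that the statistics $\localincr_k(w)$ and $\localdecr_k(w)$ translate, under $\iota$, into the analogous local statistics of LLPS. For $\localincr_k$, I would use that reversal sends a consecutive decomposition $w = u_1 \mid \cdots \mid u_k$ to $\iota(w) = \iota(u_k) \mid \cdots \mid \iota(u_1)$ and turns an increasing subsequence into a decreasing one, so our longest-$k$-increasing-in-consecutive-blocks statistic pairs with their corresponding statistic. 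For $\localdecr_k$, I would use that disjoint (non-consecutive) decompositions are preserved by $\iota$ as a map of sets, and that $\localdecr(u) = 1 + |\{\text{descents of }u\}|$ counts the number of maximal ascending runs of $u$; this quantity is the invariant appearing on the conjugate side of \cite[Lemma~2.1]{LLPS19}.

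The hard part of this program is the bookkeeping in the second paragraph: one must track carefully whether $\iota$ turns our $\localincr_k$ into LLPS's $I$-type or $D$-type statistic, and correspondingly whether $\lambdaBBS$ or $\muBBS$ gets picked up on the right-hand side. The asymmetric-looking definition of $\localdecr$ (a descent count plus one) is the most error-prone ingredient, since under the wrong involution it can masquerade as a local increasing statistic of $\iota(w)$ rather than a local decreasing one. Once the correct involution is fixed and the two compatibilities above are verified, the two equalities in Lemma~\ref{lem:local Greene's theorem} follow by substituting $\iota(w)$ into \cite[Lemma~2.1]{LLPS19}, with no additional combinatorial input required.
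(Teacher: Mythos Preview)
Your proposal is correct and matches the paper's approach exactly: the paper does not give an independent proof of this lemma at all, but simply records it as a corollary of \cite[Lemma~2.1]{LLPS19}, after noting earlier that ``they studied an alternate version of the box-ball system, and in this section we reframe their result to match our box-ball convention.'' Your plan---fix the involution intertwining the two BBS conventions, then check that the soliton shapes and the statistics $\localincr_k$, $\localdecr_k$ correspond---is precisely the dictionary work the paper alludes to but leaves implicit.
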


\begin{example}
\label{ex:lem:local Greene's theorem:5623714}
Let $w=5623714$, the permutation used in Example~\ref{ex:thm:Greene's theorem:5623714}. 
For short, we write $\localincr_k \coloneqq \localincr_k (w)$
and $\localdecr_k \coloneqq \localdecr_k (w)$. 
Then 
\begin{align*}
\localincr_1 &= \incr(w) = 3 \text{ (since the longest increasing subsequences are $5 6 7$, $2 3 7$, and $2 3 4$)},  \\
\localincr_2 &= 5 \text{ (witnessed by $56|23714$ or $56237|14$)}, \\
\localincr_3 &= 7 \text{ (witnessed uniquely by $56|237|14$), and } \\
\localincr_k &= 7 \text{ for all $k\geq 3$}.
\end{align*}

We have 
\begin{align*}
\localdecr_1 &= \localdecr (w) = 1 + |\text{descents of } 5623714| = 1 + |\{ 2, 5 \}| = 3, \\
\localdecr_2 &= 6 \text{ (one can take subsequences $531$ and $6274$, among other partitions),} \\
\localdecr_3 &= 7 \text{ (one can take subsequences 
$52$, $631$, and $74$, among other partitions), and}
\\
\localdecr_k &= 7 \text{ for all $k \geq 3$.}
\end{align*}

By Lemma~\ref{lem:local Greene's theorem}, we have that
$\sh \SD{w}=(\localincr_1,\localincr_2-\localincr_1, \localincr_3-\localincr_2)=(3,2,2)$ and its conjugate is $(\localdecr_1,$ $\localdecr_2-\localdecr_1$, $\localdecr_3-\localdecr_2)=(3,3,1)$. 
We can verify this by computing the soliton decomposition~$\SD{w}$, which turns out to be the nonstandard tableau
\[\young(134,27,56).\]
Note that, in this example, $\SD{w} \neq \P{w}$, demonstrating Theorem~\ref{thmintro:tfae}. Also, in this example, 
$\sh \SD{w}=(3,2,2)$ is smaller than 
$\sh \P{w}=(3,3,1)$ in the dominance partial order.
\end{example}

 \begin{corollary}
 If $w \in S_n$, then 
 the BBS soliton partition of $w$
 is smaller or equal to the RS partition of $w$ in the dominance partial order. 
\end{corollary}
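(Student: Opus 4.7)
The plan is to reduce the corollary to a simple inequality between the statistics $\localincr_k(w)$ and $\incr_k(w)$, via the two versions of Greene's theorem stated earlier. Recall that dominance order on partitions $\lambdaBBS, \lambdaRS \vdash n$ is defined by requiring $\lambdaBBS_1 + \cdots + \lambdaBBS_k \leq \lambdaRS_1 + \cdots + \lambdaRS_k$ for all $k \geq 1$. Note that both $\SD{w}$ and $\P{w}$ contain the $n$ entries of $w$, so both partitions indeed sum to $n$, so we may sensibly compare them in dominance order.

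By Theorem~\ref{thm:Greene's theorem}, the right-hand partial sums equal $\incr_k(w)$, and by Lemma~\ref{lem:local Greene's theorem}, the left-hand partial sums equal $\localincr_k(w)$. Hence the entire statement reduces to the assertion
\[
\localincr_k(w) \leq \incr_k(w) \qquad \text{for every } k \geq 1.
\]

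To prove this inequality, I would fix $k$ and an optimal decomposition $w = u_1 \mid u_2 \mid \cdots \mid u_k$ into consecutive subsequences achieving the maximum in the definition of $\localincr_k(w)$. For each block $u_j$, let $\sigma_j$ be a longest increasing subsequence of $u_j$, so $|\sigma_j| = \incr(u_j)$. Because the blocks $u_j$ occupy pairwise disjoint positions in $w$, the subsequences $\sigma_1, \sigma_2, \ldots, \sigma_k$ are pairwise disjoint as subsequences of $w$, and each $\sigma_j$ is increasing. Thus $\sigma_1 \sqcup \cdots \sqcup \sigma_k$ is a $k$-increasing subsequence of $w$ of total length $\sum_{j=1}^k \incr(u_j) = \localincr_k(w)$. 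By the definition of $\incr_k(w)$ as the length of a \emph{longest} $k$-increasing subsequence, this yields $\localincr_k(w) \leq \incr_k(w)$, completing the proof.

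The argument is short and the main (really, only) observation is that consecutive disjoint windows automatically produce disjoint increasing subsequences, so the ``localized'' maximum can never exceed the ``global'' one — meaning there is no real obstacle here. The content of the corollary is entirely carried by the two Greene-type theorems that have already been recorded; this corollary just packages their comparison.
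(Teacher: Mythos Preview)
Your proof is correct and matches the paper's approach exactly: both reduce the dominance inequality to $\localincr_k(w) \leq \incr_k(w)$ via the two Greene-type results, and both justify this inequality by observing that an optimal consecutive decomposition yields a $k$-increasing subsequence of $w$. You spell out the disjointness argument in slightly more detail than the paper does, but the content is identical.
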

\begin{proof}
 Let $\lambdaBBS = (\lambdaBBS_1, \lambdaBBS_2, \lambdaBBS_3, \dots)$ denote $\sh \SD{w}$ and 
 let $\lambdaRS = (\lambdaRS_1, \lambdaRS_2, \lambdaRS_3, \dots)$ denote
 $\sh \P{w}$. 
Then, for all $k=1,2,\dots$, we have 
\begin{align*}
\lambdaBBS_1 + \lambdaBBS_2 + \dots + \lambdaBBS_k &= \localincr_k (w) ~\text{ by localized Greene's theorem (Lemma~\ref{lem:local Greene's theorem})}\\
& \leq \incr_k (w)~ \text{ since $\localincr_k(w)$ gives the length of a $k$-increasing subsequence of $w$}\\
&= \lambdaRS_1 + \lambdaRS_2 + \dots + \lambdaRS_k ~\text{ by Greene's theorem (Theorem~\ref{thm:Greene's theorem})}.
\qedhere
\end{align*}
 \end{proof}

\section{Fukuda's carrier algorithm}
\label{sec:carrier}

In this section, 
we review the carrier algorithm and the fact that the RS insertion tableau is an invariant of a box-ball system (BBS). 

\subsection{Carrier algorithm}
The carrier algorithm is a way to describe a BBS move as a sequence of local operations of inserting and bumping numbers in and out of a carrier filled with a weakly increasing string. 
A version of the carrier algorithm was first introduced in~\cite{TM97carrier}, and
the version of the carrier algorithm we use in this paper comes from~\cite[Section~3.3]{fukuda04}. 
Given a BBS state at time~$t$, 
the carrier algorithm is used to calculate the state at time $t+1$. 
We describe the process in Algorithm~\ref{alg:carrier}. 
Note that, after each insertion and ejection step, the sequence in the carrier is weakly increasing.

\begin{algorithm}[p]
\begin{algorithmic}[1]
\Begin carrier algorithm
\State Set $e\coloneqq n+1$, so that $e$ is considered to be larger than any ball 
\State\parbox[t]{\dimexpr\linewidth-\algorithmicindent}{Set $B\coloneqq$ the configuration of the BBS at time $t$, 
where each empty box is replaced with an $e$ and the first (leftmost) element of $B$ is the integer in the first (leftmost) nonempty box in the configuration and the last (rightmost) element of $B$ is the integer in the last (rightmost) nonempty box of the configuration}
\State Let $\ell$ denote the number of elements (including the $e$'s) of $B$ 
\State Fill the ``carrier'' $\mathcal{C}$ ---depicted \raisebox{.5em}{$\underbracket{\phantom{ \dots \dots }}$}---with $n$ copies of $e$
\State Write $B$ to the right of $\mathcal{C}$
\Begin Process 1: insertion process
\ForAll {$i$ in $\{1,2,\dots,\ell\}$}
\State Set $p$ to be the $i$\th leftmost element of $B$
\Begin element ejection process 
\If{an element in $\mathcal{C}$ is larger than $p$} 
\State Set $s\coloneqq$ smallest element in $\mathcal{C}$ larger than $p$. If $s=e$, pick the leftmost $e$
\State Eject $s$ from $\mathcal{C}$ and put it immediately to the left of $\mathcal{C}$ 
\State insert $p$ in the place of $s$ 
\Else
\State Set $s\coloneqq$ the leftmost (smallest) element in $\mathcal{C}$ \label{alg:carrier:begin:element ejection process} 
\State Eject $s$ from $\mathcal{C}$ and put it immediately to the left of $\mathcal{C}$
\LineComment{Note: There are now $n-1$ elements in $\mathcal{C}$}
\State 
Shift each element of $\mathcal{C}$ to the left by one
\State 
Place $p$ in the rightmost location in $\mathcal{C}$

\LineComment{Note: There are now $n$ elements in $\mathcal{C}$}

\EndIf
\End element ejection process
\EndFor

\End Process 1: insertion process

\Begin Process 2: flushing process
\While{there are non-$e$ elements in $\mathcal{C}$}
\State Set $p\coloneqq e$ 
\State Perform the element ejection process (see line~\ref{alg:carrier:begin:element ejection process})
\EndWhile
\End Process 2: flushing process
\LineComment{\parbox[t]{\dimexpr\linewidth-\algorithmicindent-23pt}{
Note: The current elements to the left of $\mathcal{C}$ correspond to the $t+1$ state of the BBS}}
\End carrier algorithm 
\end{algorithmic}
\caption{The carrier algorithm~\cite{fukuda04}.}\label{alg:carrier}
\end{algorithm}

\begin{example}
\label{ex:carrier:452361:t=2 to t=3}
We compute the configuration at time $t=3$
of the box-ball system from Figure~\ref{fig:intro:452361:t=0 to t=4} by applying the carrier algorithm to the configuration at time $t=2$. 
Following Algorithm~\ref{alg:carrier}, we set $B\coloneqq 452ee136$. 
The carrier algorithm then proceeds as follows.
\begin{align*}
\text{\textbf{begin}} &\text{ Process 1:} \text{ insertion process}\\
&\underbracket{eeeeee}452ee136\\
e&\underbracket{4eeeee}52ee136\\
ee&\underbracket{45eeee}2ee136\\
ee4&\underbracket{25eeee}ee136\\
ee42&\underbracket{5eeeee}e136\\
ee425&\underbracket{eeeeee}136\\
ee425e&\underbracket{1eeeee}36\\
ee425ee&\underbracket{13eeee}6\\
ee425eee&\underbracket{136eee}\\
\text{\textbf{end}} &\text{ insertion process}
\\[1mm]
 \text{\textbf{begin}} \text{ Process 2:} &\text{ flushing process}\\
ee425eee&\underbracket{136eee}\leftarrow e\\
ee425eee1&\underbracket{36eeee}\leftarrow e\\
ee425eee13&\underbracket{6eeeee}\leftarrow e\\
ee425eee136&\underbracket{eeeeee}\\
\text{\textbf{end}} \text{ flushing} & \text{ process}
\end{align*}
The elements $ee425eee136$ to the left of $\mathcal{C}$ correspond to the configuration at time $t=3$ given in Figure~\ref{fig:intro:452361:t=0 to t=4}.
\end{example}

\subsection{The RS insertion tableau is an invariant of a box-ball system}
  
\begin{remark}[{\cite[Remark 4]{fukuda04}}]\label{rmk: carrier is knuth}
The carrier algorithm can be viewed as a sequence of Knuth moves. 
Consider the insertion of $\mathbf{p}$ into the carrier. 
Note that, since our carrier can carry $n$ elements, if $\mathbf{p} \neq e$, then the carrier must contain a number (possibly $e$) greater than $\mathbf{p}$. If $\mathbf{p}=e$, then no number in the carrier is greater than~$\mathbf{p}$. 

First, suppose $\mathbf{p} \neq e$, and let $C_p$ denote the smallest element in the carrier which is greater than~$\mathbf{p}$. 

\begin{enumerate}[(i)]
\item 
    If $C_p$ is the smallest element in the carrier, then the insertion process is equivalent to applying a sequence of $K_1^-$ moves \begin{center}
      $\underbracket{C_p z_1 z_2 \cdots z_{\ell-1}z_\ell} \, \mathbf{p}$ \\[2mm]
      ${C_p z_1 z_2\cdots z_{\ell-1} \mathbf{p} z_\ell}$ \\
      $\vdots$ \\
      ${C_p z_1 \mathbf{p} z_2\cdots z_{\ell - 1} z_\ell}$\\
      $C_p\underbracket{\mathbf{p} z_1 z_2 \dots \dots z_\ell}.$\\[2mm]
    \end{center} 
\item 
    If $C_p$ is the largest element in the carrier, then the insertion process is equivalent to applying a sequence of $K_2^+$ moves 
    \begin{center}
     $\underbracket{x_1 x_2\cdots x_{m-1}x_mC_p} \, \mathbf{p}$ \\[2mm]
      ${x_1 x_2\cdots x_{m-1}C_p x_m \mathbf{p}}$ \\
      $\vdots$ \\
      $x_1 C_p x_2\cdots x_{m-1}x_m \mathbf{p}$
      \\
      $C_p\underbracket{x_1 x_2\cdots x_{m-1}x_m \mathbf{p}}.$\\[2mm]
\end{center} 
\item 
    If $C_p$ is neither the smallest nor the largest element in the carrier, 
    then the insertion process is equivalent to applying a sequence of $K_1^-$ moves
\begin{center}
  $\underbracket{x_1 x_2 \cdots x_{m-1} x_m C_p z_1 z_2 \cdots z_{\ell-1}z_\ell} \, \mathbf{p}$ \\[2mm]
      ${x_1 x_2 \cdots x_{m-1} x_m C_p z_1 z_2 \cdots z_{\ell-1} \mathbf{p} z_\ell}$ \\
      $\vdots$ \\
      ${x_1 x_2 \cdots x_{m-1} x_m C_p z_1 \mathbf{p} z_2 \cdots z_{\ell-1}z_\ell}$\\
      ${x_1 x_2 \cdots x_{m-1} x_m C_p \mathbf{p} z_1 z_2 \cdots z_{\ell-1}z_\ell}$
    \end{center}
    followed by a sequence of $K_2^+$ moves 
    \begin{center}
     ${x_1 x_2 \cdots x_{m-1}x_m C_p \mathbf{p} z_1 z_2 \cdots z_{\ell-1} z_\ell}$ \\[2mm]
      ${x_1 x_2 \cdots x_{m-1}C_p x_m \mathbf{p} z_1 z_2 \cdots z_{\ell-1} z_\ell}$ \\
      $\vdots$ \\
      $x_1 C_p x_2 \cdots x_{m-1}x_m \mathbf{p} z_1 z_2 \cdots z_{\ell-1} z_\ell$\\
      $C_p\underbracket{x_1 x_2 \cdots x_{m-1}x_m \mathbf{p} z_1 z_2 \cdots z_{\ell-1} z_\ell}.$\\[2mm]
    \end{center}
\end{enumerate}    
Next, suppose $\mathbf{p}=e$. 
Then $\mathbf{p}$ is greater than or equal to every element in the carrier, and 
the insertion process is equivalent to applying the trivial transformation
\begin{align*} &\underbracket{x_1 x_2 \cdots x_n}\: \mathbf{p} \\
x_1\: &\underbracket{x_2 \cdots x_n ~ \mathbf{p}}.
\end{align*}
\end{remark}

\begin{theorem}
[{\cite[Theorem 3.1]{fukuda04}}]
\label{thm:preserved}
The RS insertion tableau is a conserved quantity under the time evolution of the BBS, i.e., the RS insertion tableau is preserved under each BBS move. 
More precisely, let $B_t$ be the state of a box-ball system at time $t$. 
Let $B_t'$ be the permutation created from $B_t$ by removing all $e$'s.
Then $\P{B_t'}$ is identical for all $t$.
\end{theorem}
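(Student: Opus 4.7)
Our approach is to use Remark~\ref{rmk: carrier is knuth}, which expresses the carrier algorithm as a sequence of Knuth moves, and then invoke Knuth's classical theorem that Knuth-equivalent words share the same RS insertion tableau. To keep the word lengths well-defined, we first pre-pad $B_t$ with a block of $n$ trailing $e$'s, so that the flushing phase consumes input letters rather than synthesizing new ones. After padding, the full carrier algorithm operates on a single word of fixed length $|B_t|+2n$, beginning with $e^n\cdot B_t\cdot e^n$ (the carrier's $e^n$ on the left, the padded input on the right) and ending with $B_{t+1}$ followed by a block of trailing $e$'s. Both extremes have the same multiset of non-$e$ entries, namely $\{1,\dots,n\}$.

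Next we apply Remark~\ref{rmk: carrier is knuth} step by step along the algorithm. Each insertion of an actual ball $\mathbf{p}\neq e$ rewrites three consecutive letters of the combined word through a concatenation of $K_1^-$ and/or $K_2^+$ moves, depending on whether the carrier threshold $C_p$ is the minimum, the maximum, or an interior element of the carrier. Each step of the flushing phase, and likewise every insertion of $\mathbf{p}=e$, is a trivial transformation that merely shifts the carrier boundary and does not modify the underlying word. Composing these moves shows that the initial and final words are Knuth-equivalent, and hence by Knuth's theorem they have identical $P$-tableaux.

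To deduce the desired equality $\P{B_t'}=\P{B_{t+1}'}$, we use the standard RSK fact that restriction to an initial segment of the alphabet commutes with $P$: for any word $w$ on $\{1,\dots,n+1\}$, the sub-tableau of $\P{w}$ consisting of entries at most $n$ equals $\P{w|_{\leq n}}$, where $w|_{\leq n}$ is obtained from $w$ by deleting every occurrence of $e=n+1$. Applied to the two Knuth-equivalent extreme words produced in the previous step, this immediately yields $\P{B_t'}=\P{B_{t+1}'}$, which is the invariance claimed.

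The main obstacle is the case in which the carrier contains repeated $e$'s, because Definition~\ref{def: Knuth moves} requires three strictly increasing letters $x<y<z$, while a weakly-increasing carrier can produce triples with repetitions. The cleanest remedy is to temporarily relabel the $n$ initial carrier $e$'s (and every empty box of the padded input) as distinct ``super-large'' symbols $n+1<n+2<\cdots$ in a consistent order, promoting every intermediate word to a genuine permutation so that every rewrite of Remark~\ref{rmk: carrier is knuth} becomes a strict Knuth move; the final restriction to $\{1,\dots,n\}$ is insensitive to this relabeling. A secondary, routine point is checking that $n$ trailing padding $e$'s suffice to cover the flushing phase, which follows from the fact that the number of non-$e$ entries of the carrier at the end of the insertion phase is at most $n$.
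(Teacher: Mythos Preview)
Your proposal is correct and follows precisely the route the paper (and Fukuda) intends: the paper does not give its own proof of this theorem but cites \cite[Theorem~3.1]{fukuda04}, and Remark~\ref{rmk: carrier is knuth}---which you invoke---is exactly Fukuda's key observation that each carrier insertion is a composite of Knuth moves, from which invariance of the insertion tableau is immediate.

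Two minor comments on execution. First, the phrase ``in a consistent order'' for the relabeling should be made explicit: label the $e$'s by $n+1,n+2,\dots$ in their left-to-right order in the initial string $e^n\, B_t\, e^n$. With this choice one checks inductively that at every step the $e$'s remaining in the carrier carry increasing labels from left to right and that each incoming $e$ has a label larger than every carrier entry; hence the relabeled algorithm makes the same ejection choices as the original (in particular, ``smallest element larger than $p$'' agrees with ``leftmost $e$'' when that element is an $e$). Second, the restriction fact $\P{w}\!\restriction_{\le n}=\P{w|_{\le n}}$ is most transparent before relabeling: since $e=n+1$ is maximal, each insertion of $e$ merely appends to the end of the first row and never bumps, so deleting all $e$'s from $w$ leaves the sub-tableau on $\{1,\dots,n\}$ unchanged. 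Either version suffices.
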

  
\begin{example}
\label{ex:thm:preserved:452361}
As shown in Figure~\ref{fig:intro:452361:t=0 to t=4}, the configurations $452361$, $ee45e2136$, $eeee452ee136$, and $eeeeee425eee136$ are in the same box-ball system. 
As Theorem~\ref{thm:preserved} tells us, the permutations~$452361$, $452136$, and $425136$ have the same RS insertion tableau
\[
\P{452361}=\P{452136}=\P{425136} = 
\young(136,25,4).
\]
\end{example}  
  
\begin{corollary}  
\label{cor:P(w)=P(r)} 
Let $w$ be a permutation. 
If $r$ is the row reading word of $\SD{w}$, then ${\P{w} \! = \!\P{r}}$. 
\end{corollary}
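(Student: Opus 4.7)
The plan is to reduce this to a direct application of Fukuda's invariance theorem (Theorem~\ref{thm:preserved}) by identifying the row reading word of $\SD{w}$ with the non-empty-box content of any steady-state configuration of the box-ball system containing $w$.

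First I would unpack the definition of the soliton decomposition. By definition, after enough BBS moves, the system containing $w$ reaches a steady state $B_T$, and in this state the non-empty boxes (ignoring the $e$'s) partition, from left to right, into consecutive increasing blocks (the solitons) whose sizes weakly increase as we move left to right (equivalently, weakly decrease right to left). The tableau $\SD{w}$ is built by placing the rightmost (largest) soliton into row $1$, the next soliton to its left into row $2$, and so on down to the leftmost (smallest) soliton in the bottom row. Hence reading $\SD{w}$ from bottom row to top row, each row left to right, recovers the solitons in the order leftmost, next-to-leftmost, $\dots$, rightmost. This is exactly the same order in which the solitons appear when we read $B_T$ left to right and discard the $e$'s, so $r = B_T'$ in the notation of Theorem~\ref{thm:preserved}.

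Once this identification is made, the corollary is immediate: Theorem~\ref{thm:preserved} asserts that the tableau $\P{B_t'}$ is independent of $t$ along the evolution of the BBS. Taking $t=0$ gives $\P{B_0'}=\P{w}$, and taking $t=T$ (any time at or after steady state is reached) gives $\P{B_T'}=\P{r}$. Therefore $\P{w}=\P{r}$.

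The only step with any real content is the bookkeeping in the first paragraph, matching the left-to-right order of solitons in the steady-state configuration against the bottom-to-top row reading of $\SD{w}$; this is essentially a translation between the two conventions used to build $\SD{w}$ and is where I would be most careful when writing the full argument. After that, Fukuda's theorem does all of the remaining work.
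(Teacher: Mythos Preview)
Your proposal is correct and follows exactly the paper's own argument: you identify the row reading word of $\SD{w}$ with the sequence of balls in a steady-state configuration (this is the bookkeeping step you flag), and then invoke Theorem~\ref{thm:preserved} at times $t=0$ and $t=T$. There is no meaningful difference from the paper's proof.
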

\begin{proof}
Let $r$ be the row reading word of $\SD{w}$. 
By definition of the soliton decomposition tableau, 
we know that $r$ is the order in which the balls of $w$ are configured once we reach a steady state. Therefore, $r$ is a state in the box-ball system containing $w$. Theorem~\ref{thm:preserved} 
tells us that the RS insertion tableau is preserved under a sequence of box-ball moves, so
$\P{w}=\P{r}$.   
\end{proof}

\begin{example}
\label{ex:cor:P(w)=P(r):5623714}
Let $w=5623714$, the permutation from Section~\ref{sec:Greene's theorem sec:local Greene's theorem}, and let $r$ be the row reading word of $\SD{w}$. 
We have \[\SD{w}=\young(134,27,56), \quad r=5627134, \quad \text{ and } \quad \P{w}=\young(134,267,5)=\P{r}. \]
\end{example}

In Example~\ref{ex:thm:preserved:452361}, 
 the soliton decomposition coincides with the RS insertion tableau 
of the box-ball system, but 
in Example~\ref{ex:cor:P(w)=P(r):5623714}
 these two tableaux do not coincide. 
In the next section we discuss when~$\SD{w}=\P{w}$.

\section{
When the soliton decomposition and the RS insertion tableau coincide
}
\label{sec:proof of tfae}

In this section, we will prove Theorem~\ref{thm:tfae}. 
One direction of our proof uses 
the following lemma, which was communicated to us by Darij Grinberg. 
\begin{lemma}
\label{lem: Grinberg}
Suppose $S$ is a row-strict tableau, that is, every row is increasing (with no restrictions on the columns). 
Let $r$ be the row reading word of $S$. 
If $\sh S= \sh \P{r}$, 
then $S$ is standard, that is, every column of $S$ is increasing.
\end{lemma}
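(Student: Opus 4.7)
The plan is to prove the contrapositive: if $S$ is a row-strict tableau which is not standard, then $\sh\P{r}\neq\sh S$. The key idea is that a single column violation in $S$ lets us build an $i$-increasing subsequence of $r$ that exceeds what Greene's theorem predicts from $\sh\P{r}=\sh S$.

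Concretely, let $\lambdaRS=(\lambdaRS_1,\dots,\lambdaRS_m)$ denote $\sh S$, and let $R_k$ denote the $k$th row of $S$ read from the top, so that the row reading word is $r=R_m R_{m-1}\cdots R_1$ and each $R_k$ is an increasing block within $r$. Supposing $S$ is not standard, I choose indices $1\leq i\leq m-1$ and $j$ with $b\coloneqq S_{i+1,j}<S_{i,j}\eqqcolon a$. I then exhibit $i$ pairwise disjoint increasing subsequences of $r$ as follows: take $R_1,R_2,\dots,R_{i-1}$ as $i-1$ of them, contributing total length $\lambdaRS_1+\cdots+\lambdaRS_{i-1}$, and for the $i$th take the merged sequence
\[
R_{i+1}[1],\,R_{i+1}[2],\,\dots,\,R_{i+1}[j]=b,\;a=R_i[j],\,R_i[j+1],\,\dots,\,R_i[\lambdaRS_i].
\]
This merged sequence appears in $r$ in left-to-right order because the whole block $R_{i+1}$ precedes the whole block $R_i$, it is strictly increasing because $R_{i+1}$ and $R_i$ are each increasing and $b<a$ at the junction, and its length is $j+(\lambdaRS_i-j+1)=\lambdaRS_i+1$.

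Summing, $\incr_i(r)\geq\lambdaRS_1+\cdots+\lambdaRS_i+1$. On the other hand, Greene's theorem (Theorem~\ref{thm:Greene's theorem}) applied to $r$, together with the hypothesis $\sh\P{r}=\sh S=\lambdaRS$, forces $\incr_i(r)=\lambdaRS_1+\cdots+\lambdaRS_i$, a contradiction. I expect the main obstacle to be finding the right construction: the naive choice of taking all $m$ rows of $S$ as an $m$-increasing family already saturates Greene for $k=m$ and yields no information, and decreasing-subsequence bounds fail because $\decr_1(r)\leq m$ holds automatically for any row-strict filling with $m$ rows. The insight is to use only $i$ of the rows as increasing subsequences, sacrificing the leftover entries of $R_i$ and $R_{i+1}$ in order to exploit the single column violation for a net gain of exactly one entry. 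The edge cases $i=1$, $j=1$, and $j=\lambdaRS_{i+1}$ pose no difficulty and are absorbed into the same computation.
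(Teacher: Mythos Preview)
Your proof is correct and follows essentially the same argument as the paper: both prove the contrapositive by locating a column violation at rows $i,i+1$ and column $j$, taking the first $i-1$ rows together with the merged increasing sequence $R_{i+1}[1],\dots,R_{i+1}[j],R_i[j],\dots,R_i[\lambda_i]$ to obtain an $i$-increasing subsequence of length $\lambda_1+\cdots+\lambda_i+1$, which contradicts Greene's theorem under the hypothesis $\sh\P{r}=\sh S$. Your exposition is slightly more explicit about edge cases and the reason naive alternatives fail, but the core construction is identical.
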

\begin{proof}
Suppose $S$ is not standard. 
Then $S$ has two adjacent entries in a column which are out of order. 
Indexing our rows from top to bottom and our columns from left to right, this means there is a column (say, column~$c$) for which the entry in some 
row~$k$ is bigger than the entry immediately below it. 
Let $y$ be the entry in the $k$-th row, $c$-th column of $S$, and let $x$ be the entry immediately below it (in the $k+1$-th row, $c$-th column of $S$).

Since $r$ is the row reading word of $S$ and
since each row of $S$ is increasing, 
we can construct a list of $k$ disjoint increasing subsequences of $r$:
The first $k-1$ increasing subsequences of $r$ are the first $k-1$ rows of $S$. 
The $k$-th increasing subsequence starts in row~$k+1$, column~$1$ of~$S$, moving along the same row until we get to column $c$ (with entry $x$), 
then going up to row~$k$ above (which has entry $y$), then continuing to the end of row $k$. 

The length of the $k$-th increasing subsequence is larger (by $1$) than the length of the $k$-th row of $S$. 
So the total number of letters in our list of $k$ disjoint increasing subsequences of $r$ is larger by $1$ than the total length of the first $k$ rows of $S$.
Thus, Greene's theorem (Theorem~\ref{thm:Greene's theorem}) says that the total length of the first $k$ rows of the RS insertion tableau $\P{r}$ of $r$ 
is larger (at least by $1$) than the total length of the first $k$ rows of $S$. 
Therefore, the shape of $S$ is not equal to the shape of $\P{r}$.
\end{proof}

The following theorem 
gives a characterization of permutations whose soliton decompositions are equal to their RS insertion tableaux.

\begin{theorem}\label{thm:tfae} 
Let $w$ be a permutation. Then 
the following are equivalent:
\begin{enumerate}
\item 
\label{tfae:itm:one}
$\SD{w} = \P{w}.$

\item 
\label{tfae:itm:two}
$\SD{w}$ is a standard tableau.

\item 
\label{tfae:itm:three}
The shape of $\SD{w}$ equals the shape of $\P{w}$.
\end{enumerate}
\end{theorem}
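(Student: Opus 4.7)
The plan is to prove the three equivalences by establishing a cycle, with the real content living in the implications $(2) \Rightarrow (1)$ and $(3) \Rightarrow (2)$. The directions $(1) \Rightarrow (2)$ and $(1) \Rightarrow (3)$ are immediate: any RS insertion tableau is standard, and equal tableaux have equal shapes.

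For $(2) \Rightarrow (1)$, I would argue as follows. Let $r$ be the row reading word of $\SD{w}$. Since $\SD{w}$ is assumed to be a standard tableau, the property recalled in~(\ref{eq:if r is reading word of T then P(r)=T}) gives $\P{r} = \SD{w}$. On the other hand, Corollary~\ref{cor:P(w)=P(r)} says that whenever $r$ is the row reading word of $\SD{w}$, we have $\P{w} = \P{r}$. Combining the two equalities yields $\P{w} = \SD{w}$, which is $(1)$.

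For $(3) \Rightarrow (2)$, the idea is to use Lemma~\ref{lem: Grinberg} (Grinberg's lemma), which exactly takes a row-strict tableau whose shape matches that of the insertion tableau of its row reading word and upgrades it to a standard tableau. Concretely, $\SD{w}$ is always row-strict by construction. Let $r$ again be its row reading word. By Corollary~\ref{cor:P(w)=P(r)}, $\P{r} = \P{w}$, so $\sh \P{r} = \sh \P{w}$. The hypothesis of $(3)$ then reads $\sh \SD{w} = \sh \P{r}$, so Lemma~\ref{lem: Grinberg} applied to $S = \SD{w}$ forces $\SD{w}$ to be standard, establishing $(2)$.

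The only nontrivial ingredient beyond the two results cited above is Lemma~\ref{lem: Grinberg} itself, which is stated immediately before Theorem~\ref{thm:tfae} and whose proof uses Greene's theorem (Theorem~\ref{thm:Greene's theorem}) to produce an extra-long disjoint union of $k$ increasing subsequences in $r$ whenever $\SD{w}$ has a column descent. Since Lemma~\ref{lem: Grinberg} is already in hand, the main theorem is just a short assembly of these pieces; no further obstacle should arise.
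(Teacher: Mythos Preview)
Your proof is correct and matches the paper's own argument essentially step for step: the paper also notes that (1) trivially gives (2) and (3), proves $(2)\Rightarrow(1)$ via Corollary~\ref{cor:P(w)=P(r)} together with~\eqref{eq:if r is reading word of T then P(r)=T}, and proves $(3)\Rightarrow(2)$ by invoking Lemma~\ref{lem: Grinberg} after using Corollary~\ref{cor:P(w)=P(r)} to replace $\sh\P{w}$ with $\sh\P{r}$.
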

\begin{proof}
Certainly \eqref{tfae:itm:one} implies \eqref{tfae:itm:two} and \eqref{tfae:itm:three}. 
We will show that (\ref{tfae:itm:two}) implies (\ref{tfae:itm:one}) and 
(\ref{tfae:itm:three}) implies (\ref{tfae:itm:two}). 

Let $r$ be the row reading word of $\SD{w}$. 
By Corollary~\ref{cor:P(w)=P(r)}, we have 
\begin{equation}
\label{eq:P(w)=P(r)}
\P{w}=\P{r}.  
\end{equation}

First, we show that (\ref{tfae:itm:two}) implies (\ref{tfae:itm:one}). 
Suppose that $\SD{w}$ is a standard tableau $T$.
Since $r$ is the row reading word of $T$, we have $\P{r}=T$ by~\eqref{eq:if r is reading word of T then P(r)=T}.
Combining this equality with~ 
\eqref{eq:P(w)=P(r)}, we get $\P{w}=\P{r} =T=\SD{w}$.

Next, we show that \eqref{tfae:itm:three} implies \eqref{tfae:itm:two}. 
Let $S$ denote $\SD{w}$, and note that $\SD{w}$ is a row-strict tableau by construction. 
Suppose $\sh S =\sh \P{w}$. 
Since $\P{w}=\P{r}$ by~\eqref{eq:P(w)=P(r)}, we have 
$\sh\P{w}=\sh\P{r}$, so $\sh S = \sh\P{w} = \sh \P{r}$.
Because $S$ is a row-strict tableau, the permutation $r$ is the row reading word of $S$, and $\sh S = \sh \P{r}$, 
Lemma~\ref{lem: Grinberg} tells us that $S$ is standard. 
\end{proof}
  
\begin{corollary}
\label{cor:thm:tfae} 
Let $w$ be a permutation. Then 
the following five statements are equivalent:
\begin{enumerate}
\item 
$\SD{w} = \P{w}$.

\item 
$\SD{w}$ is a standard tableau.

\item 
The shape of $\SD{w}$ equals the shape of $\P{w}$.

\item 
For all $k \geq 1$, we have
\[
\localincr_k(w) = \incr_k(w). 
\]

\item 
For all $k \geq 1$, we have
\[
\localdecr_k(w) = \decr_k(w).
\]
The symbols $\localincr_k$ and $\localdecr_k$ are the statistics from localized Greene's theorem (Section~\ref{sec:local Greene's theorem})
and 
$\incr_k$ and $\decr_k$ are the statistics from Greene's theorem (Section~\ref{sec:Greene's theorem}).
\end{enumerate}
\end{corollary}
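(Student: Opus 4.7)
The plan is to show that conditions~(4) and~(5) of Corollary~\ref{cor:thm:tfae} are each equivalent to condition~(3) of Theorem~\ref{thm:tfae} (namely $\sh\SD{w} = \sh\P{w}$), and then invoke Theorem~\ref{thm:tfae} to close the loop.

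First I would establish (3)$\iff$(4). Write $\lambdaBBS = \sh\SD{w}$ and $\lambdaRS = \sh\P{w}$. By localized Greene's theorem (Lemma~\ref{lem:local Greene's theorem}), the partial sums of $\lambdaBBS$ are exactly the values $\localincr_k(w)$, and by Greene's theorem (Theorem~\ref{thm:Greene's theorem}), the partial sums of $\lambdaRS$ are exactly the values $\incr_k(w)$. Since a partition is uniquely determined by its sequence of partial sums, the equality $\localincr_k(w) = \incr_k(w)$ for all $k \geq 1$ holds if and only if $\lambdaBBS = \lambdaRS$, which is condition~(3). This is really just the statement that the differences
\[
\lambdaBBS_k = \localincr_k(w) - \localincr_{k-1}(w), \qquad \lambdaRS_k = \incr_k(w) - \incr_k(w)
\]
recover the parts of each partition (with $\localincr_0 = \incr_0 = 0$).

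Next I would establish (3)$\iff$(5). Let $\muBBS$ and $\muRS$ denote the conjugates of $\lambdaBBS$ and $\lambdaRS$ respectively. By the second parts of Lemma~\ref{lem:local Greene's theorem} and Theorem~\ref{thm:Greene's theorem}, the partial sums of $\muBBS$ are the $\localdecr_k(w)$ and the partial sums of $\muRS$ are the $\decr_k(w)$. Exactly as above, condition~(5) is equivalent to $\muBBS = \muRS$. Since conjugation of partitions is an involution, $\muBBS = \muRS$ if and only if $\lambdaBBS = \lambdaRS$, i.e.\ condition~(3).

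Combining these two equivalences with Theorem~\ref{thm:tfae}, which already gives (1)$\iff$(2)$\iff$(3), we obtain the full equivalence of all five statements. There is no real obstacle here; the only subtlety is noting that recovering a partition from its partial sums requires checking the equality for \emph{every} $k \geq 1$, which is exactly the quantifier used in the statement of the corollary.
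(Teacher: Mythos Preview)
Your argument is correct and essentially identical to the paper's: both use Greene's theorem and its localized version to identify $\sh\P{w}$ and $\sh\SD{w}$ (and their conjugates) via partial sums, reducing (4) and (5) to (3), and then appeal to Theorem~\ref{thm:tfae}. One trivial typo: in your displayed formula you wrote $\lambdaRS_k = \incr_k(w) - \incr_k(w)$ where you meant $\incr_k(w) - \incr_{k-1}(w)$.
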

\begin{proof}
For short, we write $\incr_k \coloneqq \incr_k (w)$, 
$\localincr_k \coloneqq \localincr_k (w)$, 
$\decr_k \coloneqq \decr_k (w)$, and 
$\localdecr_k \coloneqq \localdecr_k (w)$. 
By localized Greene's theorem (Lemma~\ref{lem:local Greene's theorem}), 
\begin{center}
the shape of $\SD{w}$ is 
$(\localincr_1,\localincr_2-\localincr_1, \localincr_3 - \localincr_2, \dots)$ and \\
the shape of the conjugate of 
$\SD{w}$ is 
$(\localdecr_1, \localdecr_2-\localdecr_1, \localdecr_3-\localdecr_2, \dots)$. 
\end{center}
By Greene's theorem (Theorem~\ref{thm:Greene's theorem}), 
\begin{center}
the shape of $\P{w}$ is $(\incr_1, \incr_2-\incr_1, \incr_3-\incr_2, \dots)$ and \\
the shape of the conjugate of 
$\P{w}$ is 
$(\decr_1, \decr_2-\decr_1, \decr_3-\decr_2, \dots)$. 
\end{center}
Combining these facts, 
we conclude that $\sh\SD{w} = \sh \P{w}$ if and only if 
$\localincr_k = \incr_k$ for all $k \geq 1$
if and only if 
$\localdecr_k = \decr_k$ for all $k \geq 1$.
\end{proof}

\begin{example}
Let $w=5623714$. 
From
Examples~\ref{ex:thm:Greene's theorem:5623714} and~\ref{ex:lem:local Greene's theorem:5623714}, we have ${\localincr_2(w) = 5 < 6 = \incr_2(w)}$. 
So all the other items of Corollary~\ref{cor:thm:tfae} 
must also be false.
\end{example}

\section{
Reading words and steady states}
\label{sec:reading words and steady states}

We study the steady-state configurations of a box-ball system. 
The main result of this section 
(Proposition~\ref{prop:t=0 generalization}) 
is a corollary of~\cite[Proof of Lemma 2.1 and 2.3]{LLPS19}.

\subsection{Reading words of standard tableaux}

The permutations which reach their steady state at time $0$ are precisely the row reading words of standard tableaux.

\begin{proposition}
\label{prop:t=0}
A permutation $r$ has steady-state time $0$ if and only if 
$r$ is the row reading word of a standard tableau. 
\end{proposition}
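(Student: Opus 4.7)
The plan is to prove each direction separately, using Theorem~\ref{thm:tfae} and localized Greene's theorem for the $(\Leftarrow)$ direction and a direct analysis of a single BBS move for the $(\Rightarrow)$ direction.

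For $(\Leftarrow)$, let $r$ be the row reading word of a standard tableau $T$ of shape $\lambda = (\lambda_1, \ldots, \lambda_k)$ with rows $R_1, \ldots, R_k$ from top to bottom, so that $r = R_k \mid R_{k-1} \mid \cdots \mid R_1$. First I would observe that the maximal consecutive increasing runs of $r$ are exactly these rows: each row is increasing, and at each boundary between $R_{i+1}$ and $R_i$ in $r$, the last entry of $R_{i+1}$ is at least $T_{i+1,1}$, which exceeds $T_{i,1}$ by column-strictness, so there is a descent. By \eqref{eq:if r is reading word of T then P(r)=T}, $\P{r} = T$, so $\sh \P{r} = \lambda$. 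To establish $\sh \SD{r} = \lambda$ I would apply Lemma~\ref{lem:local Greene's theorem}: the bound $\localincr_j(r) \leq \incr_j(r) = \lambda_1 + \cdots + \lambda_j$ is automatic, and the matching lower bound is witnessed by the consecutive-block decomposition of $r$ whose first block is $R_k \mid R_{k-1} \mid \cdots \mid R_j$ (containing $R_j$ as an increasing subsequence of length $\lambda_j$) and whose remaining $j-1$ blocks are the single rows $R_{j-1}, R_{j-2}, \ldots, R_1$, for a total of $\lambda_j + \lambda_{j-1} + \cdots + \lambda_1$. Theorem~\ref{thm:tfae} then forces $\SD{r} = T$, so the solitons of the BBS containing $r$ coincide with the initial runs of $r$, and $r$ is in steady state at $t = 0$.

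For $(\Rightarrow)$, suppose $r$ has steady-state time $0$, and let $\tau_1, \ldots, \tau_k$ be the maximal consecutive increasing runs of $r$ from left to right. By assumption these runs are the solitons, so $|\tau_1| \leq \cdots \leq |\tau_k|$, and the tableau $\SD{r}$ whose $i$-th row (from the top) is $\tau_{k-i+1}$ is row-strict with row reading word exactly $r$. It remains to show that $\SD{r}$ is standard, i.e.~that each column is strictly increasing. My plan is to trace through one BBS move using soliton preservation: since $r$ has no internal empty boxes, ball $1$ must land at position $n+1$, and for this to be consistent with the preserved soliton structure it must be the leftmost ball of the rightmost soliton $\tau_k$. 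Inductively on ball label, the ball $\tau_i[c]$, sitting at position $p = \sum_{j<i}|\tau_j| + c$, must land at $p + |\tau_i|$, the position originally occupied by $\tau_{i+1}[c]$ (well-defined since $c \leq |\tau_i| \leq |\tau_{i+1}|$); for that target to be vacated in time, $\tau_{i+1}[c]$ must have moved earlier, forcing $\tau_{i+1}[c] < \tau_i[c]$. Reindexed through $\SD{r}$, this is exactly the column-strict condition, so $\SD{r}$ is standard.

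The main obstacle is the induction in $(\Rightarrow)$: justifying rigorously that each ball lands at its intended ``shifted'' target rather than at some intermediate empty box left behind by a previously moved (smaller) ball. I expect to handle this by a careful bookkeeping argument tracking, at the moment each ball is about to move, which positions between its current location and its intended target are still occupied --- with the key observation that the moved copies of $\tau_i[1], \ldots, \tau_i[c-1]$ precisely fill the slots vacated by $\tau_{i+1}[1], \ldots, \tau_{i+1}[c-1]$, using the column conditions established at earlier inductive steps.
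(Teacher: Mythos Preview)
Your $(\Leftarrow)$ argument has a genuine gap. You correctly establish $\SD{r} = T$ via Theorem~\ref{thm:tfae} and Lemma~\ref{lem:local Greene's theorem}, but the final inference --- ``so the solitons of the BBS containing $r$ coincide with the initial runs of $r$, and $r$ is in steady state at $t = 0$'' --- does not follow. Knowing $\SD{r} = T$ tells you only that \emph{at steady state} the solitons are the rows of $T$; it says nothing about whether the initial runs of $r$ are preserved by the very first BBS move. A priori the runs at $t=0$ could break apart, mix, and later reassemble into the same rows of $T$; you have not ruled this out, and ruling it out is essentially the statement you are trying to prove. So the detour through $\SD{r}=T$, while elegant, does not shortcut the one-step preservation argument that is actually needed.

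Your $(\Rightarrow)$ plan is reasonable and genuinely different from the paper's route. The paper obtains both directions of Proposition~\ref{prop:t=0} as the non-skew special case of Proposition~\ref{prop:t=0 generalization}, which it proves by tracing the carrier algorithm (Lemmas~\ref{lem:separation condition suppose steady state} and~\ref{lem:separation condition suppose two conditions}); you instead work directly with the ball-moving rules. Be aware that you cannot take for granted that each soliton shifts by its own length after one move from the gapless configuration --- that is a consequence, not an input --- so your induction on ball label must simultaneously establish where each ball lands and the column inequalities, exactly as you flag in your ``main obstacle.'' If you organize that induction cleanly (carrying as hypothesis the exact set of occupied positions at the moment each ball moves), the same argument run in reverse gives $(\Leftarrow)$ directly, and the $\SD{r}=T$ step becomes unnecessary.
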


In particular, if $r$ is the row reading word of a standard tableau $T$, then $T = \SD{r}.$ 
In the next section, 
the standard tableau in
Proposition~\ref{prop:t=0} is generalized to 
standard skew tableaux whose rows are weakly decreasing in length.

\subsection{Reading words of standard skew tableaux}

A BBS state can be represented as a \emph{configuration array} containing the integers from 1 to $n$ as follows: scanning the boxes from right to left, each \emph{increasing run} (maximal consecutive increasing string of balls) becomes a row in the array. 
A string of $g$ empty boxes 
 indicates that the next row below should be shifted $g$ spaces to the left.
Note that this array has increasing rows but not necessarily increasing columns; it may be disconnected and it may not have a valid skew shape. 

\begin{proposition}\label{prop:t=0 generalization} 
A BBS configuration 
is in steady state 
if and only if 
its configuration array 
is a standard (possibly disconnected) skew tableau whose 
 rows are weakly decreasing in length.
\end{proposition}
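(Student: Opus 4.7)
The plan is to prove both implications via the carrier algorithm (Algorithm~\ref{alg:carrier}), which describes a single BBS move as a sequence of local insertions and ejections through a carrier of capacity $n$. Let the maximal increasing runs of $w$, read right to left, be $\sigma_1, \sigma_2, \ldots, \sigma_k$ with lengths $\ell_1, \ldots, \ell_k$; these runs are precisely the rows (from top to bottom) of the configuration array of $w$, and the number of empty boxes between $\sigma_i$ and $\sigma_{i+1}$ in $w$ determines the horizontal offset between rows $i$ and $i+1$ in the skew tableau.

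For the backward direction, I would suppose that the configuration array is a standard skew tableau with weakly decreasing row lengths and trace the carrier algorithm on $w$. The goal is to show that after one BBS move each soliton $\sigma_i$ translates to the right by $\ell_i$ positions, producing a configuration $w'$ whose configuration array is again a standard skew tableau with the same row-length profile. The crucial step is to observe that, when the carrier has just absorbed the balls of $\sigma_k, \sigma_{k-1}, \ldots, \sigma_{i+1}$ and begins processing $\sigma_i$, the standardness of the columns guarantees that each ball of $\sigma_i$ bumps out exactly the ball of $\sigma_{i+1}$ sitting in the same column of the array (or an $e$ if no such ball exists). Thus the previously absorbed solitons are ejected in order, intact but shifted to the right. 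Iterating this through all the runs and then carrying out the flushing process yields the translation description. By induction on BBS moves, the soliton partition is preserved forever, so $w$ is in steady state.

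For the forward direction, if $w$ is in steady state then the preservation of the BBS soliton partition forces the row lengths $\ell_1 \geq \ell_2 \geq \cdots \geq \ell_k$ to be weakly decreasing; otherwise two solitons whose sizes strictly increase reading right to left would eventually collide and merge under subsequent BBS moves. To show that the array is standard, I plan a contrapositive argument: suppose there is a column descent, namely entries $a$ in row $i$ and $b<a$ directly below it in row $i+1$. Tracing the carrier through $\sigma_{i+1}$, the ball $b$ would bump out a value strictly smaller than $a$ from the carrier (some value originally from $\sigma_i$ sitting to the left of the slot occupied by $a$), producing an output in which the ejected ball and the subsequent ball $a$ form a longer increasing run. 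This would strictly increase $\localincr_j(w')$ for some $j$, contradicting the preservation of the BBS soliton partition that follows from Lemma~\ref{lem:local Greene's theorem}.

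The principal obstacle is the careful bookkeeping in the carrier analysis: one must verify precisely that the standardness of columns yields the desired bumping behaviour, and that the horizontal offsets in the output tableau are consistent with the new soliton positions. Edge cases --- such as two adjacent rows of equal length, zero-gap junctions where the skew tableau is locally connected, and the flushing phase at the right end --- require particular care. A further subtlety is that the configuration array may be disconnected, so the standardness condition is nontrivial only within each connected component; one needs a separate argument that the carrier does not mix balls from different components in a way that would invalidate the argument.
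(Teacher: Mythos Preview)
Your backward direction is essentially the paper's argument (Lemma~\ref{lem:separation condition suppose two conditions}): run the carrier through the blocks left to right and check that standardness of the columns forces each ball of $\sigma_i$ to eject the ball of $\sigma_{i+1}$ in the same column, so every block survives intact and translates. That part is fine, modulo the bookkeeping you already flag.

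The forward direction, however, has a genuine error in orientation. In your indexing, $\sigma_{i+1}$ is the \emph{lower} row of the array, hence the block further to the \emph{left} in the one-line configuration, and the carrier processes it \emph{before} $\sigma_i$. So when the ball $b\in\sigma_{i+1}$ enters the carrier, no ball of $\sigma_i$ is present to be bumped; your sentence ``the ball $b$ would bump out a value \ldots originally from $\sigma_i$'' cannot happen. The contrapositive you sketch therefore does not go through, and the appeal to $\localincr_j$ and Lemma~\ref{lem:local Greene's theorem} is left without a mechanism.

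The paper's argument (Lemma~\ref{lem:separation condition suppose steady state}) runs in the correct direction and is more direct than your proposed detour through localized Greene: after loading $L=\sigma_{i+1}$ and ejecting its first $g$ balls through the gap, the carrier holds $L_{g+1},\dots,L_\ell$. One then inserts $R_1,R_2,\dots$ from $\sigma_i$. Steady state means $L$ must emerge intact, so inserting $R_j$ must eject exactly $L_{g+j}$; by the carrier rule this forces $R_j<L_{g+j}$, which is precisely the column-increasing condition. No invariant of the soliton partition is needed --- the constraint comes straight from ``the soliton $L$ stays together.'' (The weakly-decreasing-row-length condition is immediate from the definition of steady state, so your collision heuristic is unnecessary, and in any case two blocks that meet do not simply ``merge''.)
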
 

We will give a proof in Section~\ref{sec:separation condition}. 

\begin{example}
\label{ex:prop:t=0 generalization:5623714}
Let $w= 5623714$, the example we use in Section~\ref{sec:Greene's theorem sec:local Greene's theorem}. 
The following are the box-ball system states from time $t=0$ to $t=4$ and their configuration arrays.
\begin{align*}
t=0 \qquad & 5 \, 6 \, 2 \, 3 \, 7 \, 1 \, 4 \, e \dots 
& {\young(14,237,56)} 
\\[3mm]
t=1 \qquad & e \, e \, 5 \, 6 \, e \, 2 \, 7 \, 1 \, 3 \, 4 \, e \dots 
& 
{\young(:134,:27,56)}
\\[3mm]
t=2 \qquad & e \, e \, e \, e \, 5 \, 6 \, e \, 2 \, 7 \, e \, 1 \, 3 \, 4 \, e \dots
&
{\young(::134,:27,56)}
\\[3mm]
t=3 \qquad & e \, e \, e \, e \, e \, e \, 5 \, 6 \, e \, 2 \, 7 \, e \, e \, 1 \, 3 \, 4 \, e \dots
&
{\young(:::134,:27,56)}
\\[3mm]
t=4 \qquad & e \, e \, e \, e \, e \, e \, e \, e \, 5 \, 6 \, e \, 2 \, 7 \, e \, e \, e \, 1 \, 3 \, 4 \, e \dots
&
{\young(::::134,:27,56)}
\end{align*}
In this box-ball system, all configurations at time $t \geq 1$ are in steady state.
\end{example}

\begin{example}
The following is an example of 
a non-steady-state BBS configuration and 
 its configuration array. 
Note that the configuration array is a standard skew tableau but its rows are not weakly decreasing in length. 
 \begin{align*}
 \qquad & \dots e \, 1 \, 3 \, 7 \, e \, 2 \, 4 \, 6 \, 9 \, e \, e \, 5 \, 8 \, e\dots & {\young(:::58,:2469,137)}
 \end{align*}
\end{example}

\subsection{Separation condition}
\label{sec:separation condition}

A `separation condition' for steady state is given in statement (43) in~\cite{LLPS19}. 
In Lemmas~\ref{lem:separation condition suppose steady state} and~\ref{lem:separation condition suppose two conditions}, 
we reframe this characterization for steady state in terms of our version of the box-ball system. 
Proposition~\ref{prop:t=0 generalization} follows directly from these two lemmas.

\begin{lemma}[Separation condition]
\label{lem:separation condition suppose steady state}
Let a BBS configuration be in steady state. 
Suppose two adjacent solitons $L$ (the left soliton with length $\ell$) and $R$ (the right soliton) are separated 
by $g$ empty boxes, where $g<\ell$.
Then, 
for $i=1,2,\dots,\ell-g$, 
\begin{center}
the $i$-th smallest ball of the right soliton $R$ is smaller than \\
the $(i+g)$-th smallest ball of the left soliton $L$.
\end{center}
\end{lemma}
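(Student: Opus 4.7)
My plan is to trace Fukuda's carrier algorithm (Algorithm~\ref{alg:carrier}) through the steady-state configuration. In steady state the carrier's output must reproduce the time-$(t+1)$ configuration, which has the same solitons as time $t$ with each soliton of size $s$ translated $s$ boxes to the right; reading off what the carrier is forced to eject at each step will pin down the separation condition. An auxiliary fact I would establish first is that Fukuda's algorithm always keeps the carrier weakly increasing left-to-right, which is immediate from examining both branches of the insertion step.

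The main argument is an induction on the index $j$ of the soliton in the steady-state decomposition $S_1, S_2, \ldots, S_m$ (listed left to right, with sizes $\sigma_1 \leq \sigma_2 \leq \cdots \leq \sigma_m$ and gap $g_j$ between $S_j$ and $S_{j+1}$). I would carry the invariant that after the carrier has processed the input through the end of $S_j$, its state is $(s^{(j)}_1, s^{(j)}_2, \ldots, s^{(j)}_{\sigma_j}, e, \ldots, e)$, where $s^{(j)}_1 < \cdots < s^{(j)}_{\sigma_j}$ are the balls of $S_j$. The base case $j = 1$ is immediate: the carrier starts as $n$ copies of $e$, and inserting the increasing balls of $S_1$ merely ejects $e$'s and fills the leftmost slots. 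For the inductive step I set $L = S_j$, $R = S_{j+1}$, $\ell = \sigma_j$, $k = \sigma_{j+1}$, $g = g_j$, and write $l_i := s^{(j)}_i$ and $r_i := s^{(j+1)}_i$; processing the $g$ empty boxes between $L$ and $R$ takes the else-branch $g$ times, ejecting $l_1, \ldots, l_g$ and, when $g < \ell$, leaving the carrier as $(l_{g+1}, \ldots, l_\ell, e, \ldots, e)$.

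I would then compare the $k$ ejections produced while inserting $R$ against what the shifted configuration requires at those output positions. Since $L$ shifts right by $\ell$ and $R$ by $k$, the target output at the input positions of $R$ is
\[
l_{g+1},\, l_{g+2},\, \ldots,\, l_\ell,\, \underbrace{e,\, e,\, \ldots,\, e}_{k - (\ell - g)}.
\]
A short sub-induction on $i$ shows that just before the insertion of $r_i$ the carrier is $(r_1, \ldots, r_{i-1}, l_{g+i}, l_{g+i+1}, \ldots, l_\ell, e, \ldots, e)$, which is weakly increasing; so the smallest carrier entry exceeding $r_i$ equals $l_{g+i}$ exactly when $r_i < l_{g+i}$. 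The forced ejection of $l_{g+i}$ for $i = 1, \ldots, \ell - g$ therefore yields precisely the separation condition of the lemma; the remaining insertions ($i > \ell - g$) eject $e$'s and leave the carrier as $(r_1, \ldots, r_k, e, \ldots, e)$, closing the induction on $j$. The main obstacle will be the bookkeeping: verifying at every step that the carrier stays weakly increasing and that the ``smallest carrier entry greater than $r_i$'' is indeed $l_{g+i}$ rather than some earlier $r_{i'}$ or later $l$. Both checks reduce to the sorted orderings within $L$ and $R$ together with the separation conditions already established for smaller $i$, but the details require care.
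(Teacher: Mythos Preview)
Your proposal is correct and follows essentially the same route as the paper: both arguments trace Fukuda's carrier algorithm through the configuration, observe that after processing $L$ and the $g$ blanks the carrier holds $L_{g+1},\ldots,L_\ell,e,\ldots,e$, and then use the steady-state hypothesis to force the ejection of $L_{g+i}$ upon inserting $R_i$, which is exactly the inequality $R_i<L_{g+i}$. The only cosmetic differences are that the paper treats $L,R$ as the two leftmost solitons and then says ``repeat for the rest,'' whereas you set up an explicit induction on the soliton index and phrase the steady-state constraint as ``each soliton translates by its own length'' rather than simply ``$L$ stays intact''; the underlying computation is identical.
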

\begin{proof}
We apply one BBS move to the configuration via the carrier algorithm.
Suppose \linebreak $L=L_1 L_2 \dots L_\ell$ and $R=R_1 R_2 \dots R_r$ are the two leftmost solitons. 

Our initial setup with $n$ copies of $e$ in the carrier is 
\[
\underbracket{\, ee \cdots e \,}_{\text{carrier}} 
{\, L_1 \, \dots \, L_\ell \,}
\overbrace{e \dots e}^{\mathclap{\text{$g$ copies}}}
{\, R_1 \, \dots \, R_r \,}
  \dots 
\]
First, 
we simply insert 
 $L_1,\dots,L_\ell$ into the carrier. 
Since $L$ is increasing, 
 each time we insert a ball of $L$, we 
 eject a copy of $e$. 
We get 
\begin{equation}
\label{eq:separation_condition1}
\overbrace{e \dots e}^{\mathclap{\text{$\ell$ copies}}}
\underbracket{\, L_1 \, \cdots \, L_\ell ~ ee \cdots e \,}
~
\overbrace{e \dots e}^{\mathclap{\text{$g$ copies}}}
{\, R_1 \, \dots \, R_r \,} 
\dots 
\end{equation}
Next, we insert the $g$ copies of $e$ into the carrier and eject $L_1,\dots,L_g$:
\[
e \dots e
\, 
\underbrace{L_1 \, \dots \, L_g}_{\text{first $g$ balls}}
\,
\underbracket{
\,
\overbrace{L_{g+1} \, \cdots L_\ell}^{\mathclap{\text{$\ell-g$ balls}}} ~ ee \cdots e \,}
{\, R_1 \, \dots \, R_r \,}
\dots 
\]
Since we started with a steady-state configuration, 
the left soliton $L$ must stay intact at the end of the carrier algorithm. 
So, for each $i=1,\dots,\ell-g$,
as we insert
$R_{i}$, 
we must eject 
$L_{g+i}$,
and get
\[
e \dots e \, 
L_1 \, \dots \, L_g
 \, 
 \underbrace{L_{g+1} \, \dots \, L_\ell}_{\text{$\ell-g$ balls}} 
\underbracket{R_{1} \, \cdots \, R_{\ell-g} ~ ee \cdots e \, }
{R_{\ell-g+1}\dots R_r}
 \dots 
\]
So we must have 
$R_i<L_{g+i}$ for $i=1,2,\dots,\ell-g$, 
as needed. 

After we insert the rest of the elements of $R$ into the carrier, 
we have 
\[
e \dots e \, 
{L_1 \, \dots \, L_\ell}
~
\overbrace{e e \dots e}^{\substack{r-\ell+g \\ \text{copies}}} 
~
\underbracket{\, R_{1} \, \cdots \, R_r ~ ee \cdots e \,} \dots
\]
If we have a third soliton located to the right of $R$, we would be in the same situation as \eqref{eq:separation_condition1}. 
We then repeat the same process for the rest of the solitons and arrive at the same conclusion.
\end{proof}

\begin{lemma}[Sufficient condition for steady state]
\label{lem:separation condition suppose two conditions}
Suppose a BBS configuration $w$ satisfies the following.
\begin{enumerate}
\item \label{lem:separation condition suppose two conditions:1}
The configuration array of $w$ has rows of weakly decreasing length.
\item \label{lem:separation condition suppose two conditions:2}
The configuration array of $w$ is standard; 
that is, 
if two adjacent maximal consecutive increasing blocks $L$ (the left block with length $\ell$) and $R$ (the right block) of $w$ 
are separated by $g$ empty boxes such that $g<\ell$, 
then, 
for $i=1,2,\dots,\ell-g$, 
\begin{center}
the $i$-th ball of the right block $R$ is smaller than
\\
the $(i+g)$-th ball of the left block $L$.
\end{center}
\end{enumerate}
Then $w$ is in steady state.
\end{lemma}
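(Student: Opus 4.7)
The plan is to apply the carrier algorithm (Algorithm~\ref{alg:carrier}) to $w$ and verify that after one BBS move the resulting configuration $w'$ has exactly the same maximal increasing blocks as $w$, with each block of length $\ell$ shifted right by $\ell$ positions. Since the new gaps satisfy $g_i' = g_i + \ell_{i+1} - \ell_i \geq g_i$, hypothesis~(\ref{lem:separation condition suppose two conditions:1}) is preserved, and hypothesis~(\ref{lem:separation condition suppose two conditions:2}) is inherited because the blocks are unchanged and the gaps only grow (so $L_{g_i'+j} \geq L_{g_i+j} > R_j$). Iterating this argument shows the blocks persist under every BBS move, so each block is a soliton and $w$ is in steady state.

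Label the maximal consecutive increasing blocks of $w$ from left to right as $B_1, B_2, \ldots, B_m$; by hypothesis~(\ref{lem:separation condition suppose two conditions:1}) their lengths satisfy $\ell_1 \leq \ell_2 \leq \cdots \leq \ell_m$ (since the rightmost block is the top row of the configuration array). Let $g_i$ denote the number of empty boxes between $B_i$ and $B_{i+1}$. I would prove by induction on $i$ that once the carrier algorithm has finished reading the prefix of the input ending with $B_i$, the carrier contains exactly $B_i$ followed by $n-\ell_i$ copies of $e$, and the output accumulated to the left of the carrier is the corresponding prefix of $w'$ ending with a shifted copy of $B_{i-1}$.

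The base case is immediate: inserting the strictly increasing block $B_1$ into a carrier of $e$'s bumps one $e$ per insertion, so the carrier ends as $B_1\,e^{n-\ell_1}$ with $e^{\ell_1}$ ejected. For the inductive step, split on whether $g_i \geq \ell_i$ or $g_i < \ell_i$. In the first sub-case the gap-insertions eject all of $B_i$ followed by more $e$'s and leave the carrier as $e^n$, after which $B_{i+1}$ is inserted exactly as in the base case. In the more delicate second sub-case, writing $B_i = L_1 L_2 \cdots L_{\ell_i}$, the $g_i$ gap-insertions eject $L_1, \ldots, L_{g_i}$ and leave the carrier as $L_{g_i+1} \cdots L_{\ell_i}\,e^{n-\ell_i+g_i}$. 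Then when we insert the $j$-th entry $R_j$ of $B_{i+1}$ for $j \leq \ell_i - g_i$, the element $L_{g_i+j}$ is the smallest carrier-element exceeding $R_j$: the previously inserted $R_1, \ldots, R_{j-1}$ are all below $R_j$ (since $B_{i+1}$ is increasing), and hypothesis~(\ref{lem:separation condition suppose two conditions:2}) gives $R_j < L_{g_i+j} < L_{g_i+j+1} < \cdots$. So $R_j$ cleanly replaces $L_{g_i+j}$. After these $\ell_i - g_i$ steps all of $B_i$ has been ejected, and since $\ell_{i+1} \geq \ell_i > \ell_i - g_i$ the remaining entries of $B_{i+1}$ exist and each bumps out an $e$, leaving the carrier as $B_{i+1}\,e^{n-\ell_{i+1}}$ and closing the induction.

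After processing $B_m$ the flushing phase ejects $B_m$ on the right, and reading off the full output confirms the claimed structure of $w'$. The main obstacle is the bookkeeping in the second sub-case of the inductive step; specifically, verifying that the carrier stays weakly increasing throughout the insertions of $B_{i+1}$, which relies on the chain $R_{j-1} < L_{g_i+j-1} < L_{g_i+j}$ coming from the standardness hypothesis and the monotonicity of the block $L$.
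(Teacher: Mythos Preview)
Your proposal is correct and follows essentially the same approach as the paper's proof: both run the carrier algorithm block by block, split into the cases $g_i \ge \ell_i$ and $g_i < \ell_i$, and in the latter case use hypothesis~(\ref{lem:separation condition suppose two conditions:2}) to verify that inserting $R_j$ ejects exactly $L_{g_i+j}$, so each block emerges intact with the new gap $g_i' = g_i + \ell_{i+1} - \ell_i \ge g_i$. One tiny slip: you write ``$\ell_{i+1} \ge \ell_i > \ell_i - g_i$'', but $g_i$ can be $0$; the inequality you actually need is $\ell_{i+1} \ge \ell_i \ge \ell_i - g_i$, which still guarantees $B_{i+1}$ has enough entries and the carrier ends as $B_{i+1}\,e^{n-\ell_{i+1}}$.
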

\begin{proof}
Suppose $w$ is the configuration at time $t$. 
We apply the carrier algorithm to get the configuration at time $t+1$. 
Suppose $L=L_1 L_2 \dots L_\ell$ and $R=R_1 R_2 \dots R_r$ are the two leftmost increasing runs (maximal consecutive increasing blocks of balls).

Prior to applying the carrier algorithm, we have 
\[
  \underbracket{\, ee \cdots e \,}_{\text{carrier}} 
  \underbrace{L_1\dots L_\ell}_{\text{first run}}
  \overbrace{e \dots e}^{\mathclap{\text{$g$ copies}}}
  \underbrace{R_1\dots R_r}_{\text{second run}} \dots 
\]
First, we insert each of $L_1,\dots,L_\ell$ into the carrier and eject an $e$ each time. 
We get 
\begin{equation}
\label{eq:separation_condition2}
\overbrace{e \dots e}^{\mathclap{\text{$\ell$ copies}}}
\underbracket{\, L_1\cdots L_\ell ~ ee \cdots e \,}
~
\overbrace{e \dots e}^{\mathclap{\text{$g$ copies}}}
{R_1\dots R_r}
\end{equation}
Next, we insert the $g$ copies of $e$ into the carrier and eject $L_1,\dots,L_g$. 
There are two cases: either (a) $g \geq \ell$ or (b) $g < \ell$. 

\begin{enumerate}[(a)]
\item First, suppose that $g \geq \ell$. 
Then all of $L_1,\dots,L_\ell$ are ejected and the carrier is now empty:
\[
e \dots e \, 
\underbrace{L_1\dots L_\ell}_{\text{first run}}
\overbrace{e\dots e}^{g-\ell}
\underbracket{\, ee \cdots e \,}
\underbrace{R_1\dots R_r}_{\text{second run}} \dots 
\]
We proceed by inserting $R_1,\dots,R_r$ into the carrier.
Since $R$ is increasing, we eject $r$ copies of $e$'s:
\[
e \dots e \, 
{L_1\dots L_\ell}
\overbrace{e\dots e}^{g-\ell}
\overbrace{e\dots e}^{r}
\underbracket{\, R_1\cdots R_r ~ee \cdots e \,} \dots 
\]

\item Second, suppose $g < \ell$. After $L_1,\dots,L_g$ are ejected, we have 
\[
e \dots e
\underbrace{L_1\dots L_g}_{\text{first $g$ balls}}
\underbracket{\, 
\overbrace{L_{g+1} \cdots L_\ell}^{\mathclap{\ell-g \text{ balls}}} ee \cdots e \, }
\underbrace{R_1\dots R_r}_{\text{second run}} \dots 
\]
We proceed by inserting $R_1,\cdots,R_r$ into the carrier. 
We have
$\ell \leq r$ 
by assumption part~\eqref{lem:separation condition suppose two conditions:1}
and 
$R_i<L_{g+i}$ for $i=1,2,\dots,l-g$
by assumption part~\eqref{lem:separation condition suppose two conditions:2}. 
Therefore, as we insert~$R_1,\dots,R_{\ell-g}$,
we must eject $L_{g+1},\dots, L_\ell$, and we get 
\[
e \dots e \, 
{L_1\dots L_g}
 \underbrace{L_{g+1}\dots L_\ell}_{\ell-g \text{ balls}} 
\underbracket{\, R_{1} \cdots R_{\ell-g} ~ ee \cdots e \,}
{R_{\ell-g+1}\dots R_r}
 \dots 
\]
After we insert the rest of the elements of $R$ into the carrier, 
we have 
\[
e \dots e \,
{L_1\dots L_\ell}
~
\overbrace{e e \dots e}^{\mathclap{r-\ell+g}}
~
\underbracket{ \, R_{1} \cdots R_r ~ ee \cdots e \, } \dots
\]
\end{enumerate}

In both cases, at time $t+1$ there are at least $r-\ell + g$ empty boxes to the right of $L$. 
Since~$\ell \leq r$, we have $g \leq r - \ell +g$, 
so there are at least as many empty boxes to the right of $L$ as at time $t$.
Furthermore, the increasing run $L$ stays together.

If we have a third increasing run $S=S_1\dots S_s$ to the right of $R$ (with a gap of $g'$ empty boxes), we would be in the same situation as \eqref{eq:separation_condition2}. 
After inserting the elements of $S$ into the carrier, we would have 
\[
e \dots e \,
{L_1\dots L_\ell}
~
\overbrace{e e \dots e}^{\mathclap{r-\ell+g}}
~
{R_1\dots R_r}
~
\overbrace{e e \dots e}^{\mathclap{s-r+g'}}
~
\underbracket{S_{1} \cdots S_s ~ ee \cdots e} \dots
\]
Again, there are at least as many empty boxes to the right of $R$ at time $t+1$
than at time $t$, and~$R$ stays together.

At the end of the carrier algorithm, the increasing runs stay together, their order stays the same, and the gap of empty boxes between each pair of adjacent sequences is at least as large as at time $t$. 
The new configuration satisfies both part~\eqref{lem:separation condition suppose two conditions:1} and \eqref{lem:separation condition suppose two conditions:2} of the assumption. 
By induction, subsequent carrier algorithm applications leave the order of the increasing runs unchanged, so these increasing runs are in fact solitons.
\end{proof}

By the two lemmas above, we have 
Proposition~\ref{prop:t=0 generalization}: 
a box-ball configuration is in steady state if and only if
(1) its configuration array has rows of weakly decreasing length and 
(2) each column of the configuration array is increasing.

\section{A recording tableau giving 
\textit{n--}3
steady-state time} 
\label{sec5}

In this section, 
we prove 
Theorem~\ref{thm:n minus 3}, which states 
that all permutations in $S_n$ with a certain recording tableau have box-ball steady-state time $n-3$. 
We conjecture that all other permutations in $S_n$ have steady-state time smaller than $n-3$ (Conjecture~\ref{conj: n minus 3}). 

Theorem~\ref{thm:n minus 3} turns out to be a special case of a general phenomenon, which is proven in a sequel to this paper~\cite{sumry21}: 
if two permutations have the same recording tableau, then they have the same BBS steady-state time (Conjecture~\ref{conj: same Q implies same t}).

\subsection{A recording tableau giving \textit{n--}3 steady-state time}

\begin{definition}\label{def: Q_0}
If $n \geq 5$, let $\Qmaxtime$ denote the tableau $$\begin{ytableau}
 1 & 2 &\none[\hdots]&\scalebox{.6}{$n-2$}&\scalebox{.6}{$n-1$}\\
 3 & 4\\
 n
 \end{ytableau}.$$
Let $\SnQmaxtime$ be the set of permutations $w \in S_n$ such that its recording tableau $\Q{w}$ is equal to $\Qmaxtime$.
\end{definition}

\pagebreak 
\begin{example}
\label{example:tableau n-3}
For $n=5$, the five permutations of $\SnQmaxtime$ are the following.
\begin{multicols}{5}
 45132
 
 25143
 
 35142
 
 45231
 
 35241
\end{multicols}

For $n=6$, the sixteen permutations of $\SnQmaxtime$ are as follows.

\begin{center}
\begin{tabularx}{\textwidth}{  
  >{\centering\arraybackslash}X 
  >{\centering\arraybackslash}X 
  >{\centering\arraybackslash}X
  >{\centering\arraybackslash}X 
  >{\centering\arraybackslash}X 
  >{\centering\arraybackslash}X 
  >{\centering\arraybackslash}X 
  >{\centering\arraybackslash}X}
451362&
251463&
351462&
452361&
352461&
561243&
261354&
361254\\
461253&
561342&
261453&
361452&
461352&
562341&
362451&
462351
\end{tabularx}
\end{center}
Note that one of our running examples, $452361$, 
is in $S_6 (\widehat{Q})$.
As illustrated in Figure~\ref{fig:intro:452361:t=0 to t=4}, its 
steady-state time is $3=6-3$. 
\end{example}

 \begin{remark}
It follows from Definition~\ref{def: Q_0} that the RS algorithm induces a bijection from $\SnQmaxtime$ 
onto the set of standard tableaux of shape $(n-3,2,1)$, so $\SnQmaxtime$ is counted by the sequence~\cite[A077415]{OEIS}.
\end{remark}

The rest of this section is devoted to proving Theorem~\ref{thm:n minus 3}, which states that every permutation in $\SnQmaxtime$ has steady-state time $n-3$.

\subsection{Lemmas for Theorem~\ref{thm:n minus 3}}

\begin{lemma}
\label{lem:w is not the union of increasing subsequences}
Let $n \geq 5$, and 
suppose $w \in \SnQmaxtime$. 
Then $w$ is \emph{not} the union of two increasing subsequences.
\end{lemma}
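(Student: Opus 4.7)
The plan is to deduce this directly from Greene's theorem (Theorem~\ref{thm:Greene's theorem}) applied to the shape of $P(w)$.

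First, I would observe that since $Q(w) = \widehat{Q}$, the shape of $P(w)$ equals the shape of $\widehat{Q}$, namely the partition $\lambdaRS = (n-3, 2, 1)$. Greene's theorem then gives
\[
\incr_2(w) = \lambdaRS_1 + \lambdaRS_2 = (n-3) + 2 = n-1.
\]

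Next, I would contrapose. If $w$ could be written as the disjoint union of two increasing subsequences, then by definition of $\incr_2$ we would have $\incr_2(w) \geq n$ (since the two subsequences together would contain all $n$ letters of $w$). But we have just shown $\incr_2(w) = n - 1 < n$, a contradiction. Hence $w$ is not the union of two increasing subsequences.

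There is essentially no obstacle here: the only thing to check is that $Q(w) = \widehat{Q}$ forces $\sh P(w) = (n-3, 2, 1)$, which is immediate from the RS bijection producing a pair of tableaux of the same shape, together with the fact that $n \geq 5$ ensures the shape $(n-3,2,1)$ is a valid partition (so $\widehat{Q}$ is well-defined).
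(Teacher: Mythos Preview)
Your proof is correct and follows essentially the same approach as the paper: both use Greene's theorem applied to the RS partition shape $(n-3,2,1)$ determined by $\widehat{Q}$. The paper phrases it slightly differently (noting that the partition has three parts, hence $w$ cannot be a union of two increasing subsequences), but your explicit computation $\incr_2(w)=n-1<n$ is the same argument spelled out in more detail.
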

\begin{proof}
The recording tableau of $w$ is equal to $\Qmaxtime$, which has height $3$. 
Therefore, the RS partition of $w$ has three parts.
By Greene's theorem 
(Theorem~\ref{thm:Greene's theorem}),
 $w$ is not the union of two increasing subsequences.
\end{proof}

\begin{lemma}
\label{lem:inverse RS}
Let $n \geq 5$, and 
suppose $w=w_1 w_2 \dots w_n \in \SnQmaxtime$. Then $w$ satisfies the following.

\begin{enumerate}
\item
\label{lem:inverse RS:increasing subsequence}
$w_3 < w_4 < \dots < w_{n-1}$

\item 
\label{lem:inverse RS:wn<w2}
$w_n <w_2$

\item 
\label{lem:inverse RS:w1<w2}
$w_1<w_2$

\item
\label{lem:inverse RS:w3<w1}
$w_3<w_1$ 

\item 
\label{Q-0.2}
\label{lem:inverse RS:w3<w2}
$w_3 < w_2$

\item \label{Q-0.3}
\label{lem:inverse RS:w4<w2}
$w_4 < w_2$

\end{enumerate}
\end{lemma}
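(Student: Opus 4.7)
The plan is to run the RS insertion algorithm on the word $w_1, w_2, \ldots, w_n$ one step at a time, using the shape of $\widehat{Q}$ to pin down exactly which cell must be created when $w_i$ is inserted. This forces strong constraints on the relative order of the $w_i$'s, and each of the six claimed inequalities will fall out of one of these constraints.

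I begin with the first four steps. Step~$1$ places $w_1$ at cell $(1,1)$. At step~$2$ the created cell is $(1,2)$, so $w_2$ is appended to row~$1$, giving $w_1 < w_2$ and establishing item~(\ref{lem:inverse RS:w1<w2}). At step~$3$ the created cell is $(2,1)$, so $w_3$ must bump an entry out of row~$1$; for $w_3$ to bump rather than append, $w_3 < w_2$, which is item~(\ref{lem:inverse RS:w3<w2}). At this point there are two possible configurations: either $w_1 < w_3$, in which case row~$1$ becomes $w_1, w_3$ and row~$2$ is $w_2$, or $w_3 < w_1$, in which case row~$1$ becomes $w_3, w_2$ and row~$2$ is $w_1$. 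The main obstacle is a short case analysis at step~$4$ ruling out the first configuration: running through the three subcases $w_4 > w_3$, $w_1 < w_4 < w_3$, and $w_4 < w_1$, one sees that $w_4$ either lands in $(1,3)$ or pushes a bump chain all the way to $(3,1)$, and neither produces the required cell $(2,2)$. This rules out $w_1 < w_3$, yielding $w_3 < w_1$, which is item~(\ref{lem:inverse RS:w3<w1}). In the remaining configuration, a parallel three-way subcase analysis on $w_4$ shows that $w_3 < w_4 < w_2$ is the only possibility that creates cell $(2,2)$; this gives item~(\ref{lem:inverse RS:w4<w2}) and also the extra inequality $w_3 < w_4$, which will feed into item~(\ref{lem:inverse RS:increasing subsequence}).

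After step~$4$, row~$1$ of $P$ is $w_3, w_4$ and row~$2$ is $w_1, w_2$. For each $i \in \{5, 6, \ldots, n-1\}$, the cell created by $w_i$ is $(1, i-2)$, which means $w_i$ is appended to row~$1$; hence $w_{i-1} < w_i$. Chaining these inequalities with $w_3 < w_4$ produced at step~$4$ yields $w_3 < w_4 < w_5 < \cdots < w_{n-1}$, which is item~(\ref{lem:inverse RS:increasing subsequence}). Throughout these steps row~$2$ is undisturbed and remains $w_1, w_2$.

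Finally, at step~$n$, the cell created is $(3,1)$. Inserting $w_n$ into $P$ must therefore start a bump chain of length two: $w_n$ bumps some entry $w_k$ out of row~$1$ (so $w_n < w_k$), and $w_k$ in turn must bump (rather than be appended to) row~$2$, which forces $w_k < w_2$. Combining these gives $w_n < w_k < w_2$, which is item~(\ref{lem:inverse RS:wn<w2}). Once the configuration at step~$4$ is pinned down, the remaining items follow mechanically by reading off what each subsequent insertion is forced to do.
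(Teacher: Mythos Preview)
Your proof is correct. Both you and the paper exploit the RS correspondence together with the fixed recording tableau $\widehat{Q}$, but you run the insertion \emph{forward} while the paper runs it \emph{backward} via the inverse RS algorithm. Concretely, the paper names the entries of $\P{w}$, strips them off one at a time following the positions dictated by $\widehat{Q}$, and ends with an explicit description $w=\beta_1\beta_2\,\alpha_1\alpha_2\cdots\alpha_{n-3}\,a_x$ in terms of the entries of an intermediate tableau $P_{n-1}$; the six inequalities are then read off from the standardness of $P_{n-1}$ and one observation about the bump that produces the third row. Your argument never names the entries of $\P{w}$ at all: you simply track, step by step, which cell the recording tableau forces to be created, and deduce the inequalities directly from the bump-versus-append dichotomy. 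The paper's route yields more (an explicit form for $w$), which it does not actually need here; your route is leaner for this particular lemma, at the cost of a small case analysis at step~$4$ that the inverse-RS argument avoids by handling that step after the shape is already pinned down to a $2\times 2$ box.
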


\begin{proof}
\par Since $w\in \SnQmaxtime$, 
the recording tableau of $w$ is equal to $\Qmaxtime.$ 
We will use the inverse RS algorithm\footnote{For definition of the inverse RS algorithm, see, for example, the textbook~\cite[Section 3.1]{Sag01}.} to construct $w.$ 
Let $P = \P{w}$ and $Q = \Q{w}$. 
Denote the entries in the top row of~$P$ by $a_1,a_2,\dots, a_q$ (where $q=n-3$), the second row of $P$ by $b_1$ and $b_2$, and the entry in the third row of $P$ by $c_1$.
Hence, the starting pair $P$ and $Q$ is
$$
P = \begin{ytableau}
 a_1 & a_2 & a_3 & a_4 &\none[\hdots]&a_{\scalebox{.44}{${q-1}$}}&a_q\\
 b_1 & b_2\\
 c_1\end{ytableau} \hspace{5em} Q = \begin{ytableau}
 1 & 2 & 5 & 6 &\none[\hdots]&\scalebox{.55}{$n-2$}&\scalebox{.55}{$n-1$}\\
 3 & 4\\
 n\end{ytableau}
$$
Since $P$ is standard, we know that $b_1<c_1$. 
The other entry $b_2$ in the second row is larger than~$b_1$. 
If
$b_2<c_1$, 
let $b_y$ equal $b_2$. Otherwise, let $b_y$ be $b_1$. 
In other words, we let $b_y$ denote the largest element in the second row which is smaller than $c_1$. 
Similarly, let $a_x$ denote the largest element in the first row which is smaller than $b_y$.
The first step of the inverse RS algorithm tells us that~$w_n=a_x$.

After the first step in the inverse RS algorithm, we get the pair of tableaux
\[
P_{n-1} = \begin{ytableau}
 \alpha_1 & \alpha_2 & \alpha_3 & \alpha_4 &\none[\hdots]&\alpha_{\scalebox{.44}{${q-1}$}}&\alpha_q\\
 \beta_1 & \beta_2\end{ytableau} \hspace{5em} Q_{n-1} = \begin{ytableau}
 1 & 2 & 5 & 6 &\none[\hdots]&\scalebox{.55}{$n-2$}&\scalebox{.55}{$n-1$}\\
 3 & 4\end{ytableau}.
\]

We now pause to observe two facts that will be referenced at the end of this proof. 
First, note that $P_{n-1}$ is standard 
by definition of the inverse RS algorithm.  
Thus,
\begin{equation}
\label{eq:alpha1 alpha2 dots alphaq is increasing}
\alpha_1, \alpha_2, \dots, \alpha_q \text{ is increasing}.
\end{equation}

Second, we note that 
\begin{equation}
\label{eq:ax<beta2}
a_x < \beta_2,
\end{equation} as we now explain. 
Recall that $w_n=a_x$, so, using the original RS algorithm, we insert $a_x$ into~$P_{n-1}$ to get $P$. 
Since row $1$ of $P_{n-1}$ and row $1$ of $P$ have the same size, 
we know that $a_x$ bumps a number in row $1$ of $P_{n-1}$ to row $2$. 
Let 
\begin{center}
$a_i$ denote the smallest entry in row $1$ of $P_{n-1}$ which is greater than $a_x$. 
\end{center}
The RS algorithm replaces $a_i$ with $a_x$ and bumps $a_i$ to row $2$.  
Since row $2$ of $P_{n-1}$ and row $2$ of~$P$ have the same size, 
we know that $a_i$ bumps a number in row $2$ of $P_{n-1}$. 
So $a_i$ must be smaller than $\beta_2$. Since $a_x<a_i$, we have $a_x < \beta_2$. This concludes our explanation for~\eqref{eq:ax<beta2}.

We also note that
\begin{align}
\label{eq:beta1<beta2}\beta_1 &<\beta_2,\\
\label{eq:alpha1<beta1}\alpha_1 &<\beta_1, ~\text{ and } \\
\label{eq:alpha2<beta2}\alpha_2 &<\beta_2, 
\end{align}
since $P_{n-1}$ is standard. We will reference these inequalities at the end of this proof.

If $n > 5$, the numbers $n-1, n-2, \dots, 6,5$ 
are in the first row of $Q$, so the next steps in the inverse RS algorithm are 
to remove elements $\alpha_q,\alpha_{q-1},\dots,\alpha_4,\alpha_3$ from $P_{n-1}$, in that order. 
Hence, the last $n-4$ letters of $w$ are $\alpha_3,\alpha_4,\dots,\alpha_{q-1},\alpha_{q},a_x$.

The new pair of tableaux is 
\[
P_4 = \begin{ytableau}
 \alpha_1 & \alpha_2\\
 \beta_1 & \beta_2\end{ytableau} \hspace{5em} Q_4 = \begin{ytableau}
 1 & 2 \\
 3 & 4\end{ytableau}.
\]
Note that $4$ is the bottom right corner of $Q_4$. 
Since $\alpha_2 < \beta_2$ by~\eqref{eq:alpha2<beta2}, we know that $\alpha_2$ is the largest element in row~1 of $P$ which is smaller than $\beta_2$. 
So $w_4=\alpha_2$, 
and  
the last $n-3$ letters of~$w$ are $\alpha_2,\alpha_3,\alpha_4,\dots,\alpha_q,\alpha_{q-1},a_x$.

The new pair of tableaux is 
\[
P_3 = \begin{ytableau}
 \alpha_1 & \beta_2\\
 \beta_1 \end{ytableau} \hspace{5em} Q_3 = \begin{ytableau}
 1 & 2 \\
 3 \end{ytableau}.
\]
Note that $3$ is in the second row of $Q_3$. 
We know from~\eqref{eq:beta1<beta2} that $\beta_2$ is larger than $\beta_1$, so $\alpha_1$ is the largest element in row~1 smaller than $\beta_1$. Thus, $w_3=\alpha_1$. 
So the last $n-2$ letters of $w$ are~$\alpha_1,\alpha_2,\alpha_3,\alpha_4,\dots,\alpha_q,\alpha_{q-1},a_x$. 
The new pair of tableaux is 
\[
P_2 = \begin{ytableau}
 \beta_1 & \beta_2\end{ytableau} \hspace{5em} Q_2 = \begin{ytableau}
 1 & 2 \end{ytableau}.
\]
We then remove $\beta_2$ and $\beta_1$ from $P_2$, in that order. 

Therefore, 
\[w =
\underbrace{\beta_1\beta_2}_{\text{increasing}}\underbrace{\alpha_1 \alpha_2 \alpha_3 \alpha_4 \dots \alpha_{q-1}\alpha_q}_{\text{increasing}} a_x. 
\] 
We now have all the necessary information to prove all parts of the lemma.
\begin{enumerate}
\item
The subsequence 
$w_3$, $w_4$, $\dots$, $w_{n-1}$ is 
increasing
because it is equal to the sequence 
$\alpha_1$, 
$\alpha_2$, 
$\dots$, 
$\alpha_q$, 
which is increasing due to~
\eqref{eq:alpha1 alpha2 dots alphaq is increasing}. This proves part~\eqref{lem:inverse RS:increasing subsequence}.

\item 
We have $w_n < w_2$ from~\eqref{eq:ax<beta2}, since $w_n=a_x$ and $w_2=\beta_2$. 
This proves part~\eqref{lem:inverse RS:wn<w2}.

\item 
We have $w_1<w_2$ from~\eqref{eq:beta1<beta2}, since $w_1 = \beta_1$ and $w_2 = \beta_2$. This proves part~\eqref{lem:inverse RS:w1<w2}.

\item 
We have $w_3<w_1$ 
from~\eqref{eq:alpha1<beta1}, since $w_3=\alpha_1$ and $w_1=\beta_1$. 
This proves part~\eqref{lem:inverse RS:w3<w1}. 

\item 
We have $w_3<w_2$ since $w_1<w_2$ and $w_3<w_1$. 
This proves part~\eqref{lem:inverse RS:w3<w2}. 

\item 
We have $w_4<w_2$ 
from~\eqref{eq:alpha2<beta2}, since $w_4=\alpha_2$ and $w_2=\beta_2$.
This proves part~\eqref{lem:inverse RS:w4<w2}. 
\qedhere
\end{enumerate}
\end{proof}

\begin{lemma}
\label{lem: these five are the only cases}
Suppose $w=w_1\dots w_n \in S_n(\Qmaxtime)$. 
\begin{enumerate}
\item 
Either $w_n=1$ or $w_3=1$.
 
\item 
If $w_3=1$, then $w_1=2$, $w_4=2$, or $w_n=2$.

\item 
If $w_3=1$ and $w_1=2$, then $w_4=3$ or $w_n=3$.
\end{enumerate}
\end{lemma}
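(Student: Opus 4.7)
The plan is to use Lemma~\ref{lem:inverse RS} throughout: that result already pins down almost the entire ``shape'' of any permutation $w \in \SnQmaxtime$, namely that $w_3 < w_4 < \dots < w_{n-1}$ is an increasing run, and that each of $w_1, w_3, w_4, w_n$ is strictly less than $w_2$, while $w_3 < w_1$. So the strategy for all three parts is the same: pick the smallest value we want to locate, and use these inequalities to eliminate every position where it cannot sit.

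For part (1), I would argue that $1$ cannot lie in position $2$ (since $w_2$ strictly dominates $w_1, w_3, w_4, w_n$, forcing $n \leq 4$ if $w_2 = 1$), cannot lie in position $1$ (since $w_3 < w_1$ would force $w_3 < 1$), and cannot lie in any position $i$ with $4 \leq i \leq n-1$ (since then the increasing chain $w_3 < w_4 < \dots < w_{n-1}$ would require $w_3 < 1$). The only positions left are $3$ and $n$.

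For part (2), assume $w_3 = 1$ and repeat the same elimination for the value $2$. The inequality $w_2 > w_1, w_3, w_4, w_n$ now forces $w_2 \neq 2$, because otherwise one of $w_1, w_4, w_n$ would also equal $1$, colliding with $w_3$. For any $i$ with $5 \leq i \leq n-1$ we have $w_i > w_4 > w_3 = 1$, hence $w_i \geq 3$; and $w_3 = 1 \neq 2$. That leaves only positions $1$, $4$, and $n$ for the value~$2$.

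For part (3), assume additionally $w_1 = 2$ and look for the value $3$. From $w_4 > w_3 = 1$ and $w_4 \neq w_1 = 2$ we get $w_4 \geq 3$, and then $w_i > w_4 \geq 3$ for all $5 \leq i \leq n-1$. If $w_2$ were $3$, the four strict inequalities $w_1, w_3, w_4, w_n < w_2$ would force $\{w_1, w_3, w_4, w_n\} \subseteq \{1,2\}$, impossible since these are four distinct entries. So $w_2 \neq 3$, and the only remaining positions are $4$ and $n$. The main ``obstacle'' is really just bookkeeping: making sure each elimination is justified by an inequality already proven in Lemma~\ref{lem:inverse RS} rather than by a hidden extra assumption; everything else is a clean case-by-case argument.
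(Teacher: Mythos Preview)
Your proof is correct and follows essentially the same approach as the paper: both arguments use the inequalities from Lemma~\ref{lem:inverse RS} to eliminate all positions where the target value ($1$, $2$, or $3$) cannot appear. The only cosmetic differences are that you frame each part as a position-by-position elimination, and in part~(3) you rule out $w_2 = 3$ via a pigeonhole count on $\{w_1,w_3,w_4,w_n\}$ whereas the paper chains $2 < w_n < w_2$; both are equally valid.
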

\begin{proof}
\begin{enumerate}
\item 
Suppose $w_n \neq 1$. Since both $w_1,w_2$ and 
$w_3,\dots,w_{n-1}$ are increasing subsequences by Lemma~\ref{lem:inverse RS}\eqref{lem:inverse RS:w1<w2},\eqref{lem:inverse RS:increasing subsequence}, 
either $w_1=1$ or $w_3=1$. 
Since $w_3 < w_1$ by
Lemma~\ref{lem:inverse RS}\eqref{lem:inverse RS:w3<w1}, 
we must have $w_3=1$.

\item 
Assume $w_3 = 1$. 
We will show that $w_2\neq 2$ and that none of $w_5,\dots,w_{n-1}$ is equal to $2$ (hence $w_1=2$, $w_4=2$, or $w_n=2$). 
Since 
$w_3<w_2$ and $w_4<w_2$ 
by Lemma~\ref{lem:inverse RS}\eqref{lem:inverse RS:w3<w2},\eqref{lem:inverse RS:w4<w2} 
and since $w$ is a permutation, 
we must have $2 < w_2$. 
Similarly, since $w_3<w_4<w_5<\dots<w_{n-1}$ 
by Lemma~\ref{lem:inverse RS}\eqref{lem:inverse RS:increasing subsequence}
and since $w$ is a permutation, each of $w_5,\dots,w_{n-1}$ must be larger than $2$. 

\item 
Suppose $w_3=1$ and $w_1=2$. 
We will prove that $w_2\neq 3$ and none of $w_5,\dots,w_{n-1}$ 
is equal to $3$ (hence $w_4=3$ or $w_n=3$). 
Since $w_n \notin \{ 1,2\}$, we have $2< w_n$. 
By Lemma~\ref{lem:inverse RS}\eqref{lem:inverse RS:wn<w2}, we have $w_n < w_2$. 
So $2< w_n < w_2$, which implies that $w_2$ is larger than~$3$ (since $w$ is a permutation). 
Similarly, 
since $w_3 < w_4$ 
by Lemma~\ref{lem:inverse RS}\eqref{lem:inverse RS:increasing subsequence} 
and $w_1=2$, we must have~$2<w_4<w_5<\dots<w_{n-1}$ 
by Lemma~\ref{lem:inverse RS}\eqref{lem:inverse RS:increasing subsequence}. 
So each of $w_5,\dots,w_{n-1}$ is larger than $3$ (since $w$ is a permutation).
\qedhere
\end{enumerate}
\end{proof}

\subsection{Proof of Theorem~\ref{thm:n minus 3}}

\begin{theorem}
\label{thm:n minus 3}
If $n \geq 5$, 
 every permutation in $\SnQmaxtime$ has steady-state time $n-3.$
\end{theorem}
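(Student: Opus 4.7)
The plan is to prove Theorem~\ref{thm:n minus 3} by a case analysis based on Lemma~\ref{lem: these five are the only cases}, which partitions $\SnQmaxtime$ into (at most) five families determined by the positions of the small balls $1$, $2$, and $3$ in $w$: (A) $w_n = 1$; (B) $w_3 = 1$ and $w_4 = 2$; (C) $w_3 = 1$ and $w_n = 2$; (D) $w_3 = 1$, $w_1 = 2$, and $w_4 = 3$; and (E) $w_3 = 1$, $w_1 = 2$, and $w_n = 3$. In every case, Lemma~\ref{lem:inverse RS} guarantees that $w_3 w_4 \ldots w_{n-1}$ is an increasing subsequence of length $n-3$, together with the inequalities $w_1 < w_2$, $w_n < w_2$, $w_3 < w_1$, $w_3 < w_2$, and $w_4 < w_2$.

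For each of the five cases, I would trace the BBS dynamics explicitly, either by applying the BBS move rule directly or via Fukuda's carrier algorithm (Algorithm~\ref{alg:carrier}), in order to establish two things. First (upper bound): after exactly $n-3$ BBS moves, the configuration array has three rows of lengths $n-3$, $2$, and $1$, with weakly decreasing row lengths and strictly increasing columns; by Proposition~\ref{prop:t=0 generalization}, the configuration is then in steady state. Second (lower bound): at each intermediate time $t \in \{0, 1, \ldots, n-4\}$, the configuration array violates at least one of the two conditions of Proposition~\ref{prop:t=0 generalization}, so steady state has not yet been reached. As a preliminary step, I would also verify that the soliton decomposition has shape $(n-3, 2, 1)$, which follows from combining $\lambda^{BBS} \le \lambda^{RS} = (n-3, 2, 1)$ (the corollary of localized Greene's theorem) with the fact that $w_3 \ldots w_{n-1}$ realizes $I^{loc}_1(w) = n - 3$ and a suitable two-piece concatenation (for example $w_1 w_2 \mid w_3 \ldots w_n$ in Case~A) realizes $I^{loc}_2(w) = n - 1$.

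A uniform mechanism underlying the claim is that, in each case, there is a "small" ball (typically $1$, $2$, or $3$) that initially obstructs the separation of the length-$(n-3)$ soliton from the other balls and that drifts one additional empty box away from the large run per BBS step. Steady state occurs precisely when the gap between this obstructing ball and the length-$(n-3)$ soliton meets the separation condition of Lemma~\ref{lem:separation condition suppose steady state}, which is reached on the $(n-3)$-th step. I expect the main obstacle to be the bookkeeping: tracking the relative positions of the balls $1$, $2$, $3$ and the entries $w_1, w_2, w_n$ against the large increasing run through $n-3$ BBS moves, in a way that is uniform enough to avoid five separate tedious computations. One potential simplification is to focus, in each case, on a single statistic — such as the number of empty boxes separating ball~$1$ (or ball~$2$) from the first element of the large run — and show that this statistic grows by exactly one per BBS step until it matches the threshold dictated by Lemma~\ref{lem:separation condition suppose steady state}.
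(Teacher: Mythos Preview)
Your overall plan---split into the five cases of Lemma~\ref{lem: these five are the only cases}, trace the BBS dynamics, and invoke Proposition~\ref{prop:t=0 generalization} for both bounds---is exactly the paper's approach. The paper's execution is slicker than what you outline, however: rather than carrying all five cases through $n-3$ steps separately, it computes only the $t=1$ configuration in each case and observes that they all take the \emph{same} form
\[
e\,e\,w_1\,w_2\,\underbrace{e\cdots e}_{n-5}\,x\,\underbrace{1\,y_1\cdots y_{n-4}}_{\text{increasing}},
\]
where $x=w_j$ for a case-dependent index $j$; from $t=1$ onward the analysis is uniform. The preliminary verification that $\sh\SD{w}=(n-3,2,1)$ is not used.

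Your ``uniform mechanism'' paragraph misidentifies what drives the timing, and following it would send you chasing the wrong statistic. The length-$(n-3)$ run already begins with the ball~$1$, so ``the number of empty boxes separating ball~$1$ from the first element of the large run'' is identically zero. The actual bottleneck is between the \emph{pair} $w_1w_2$ (speed~$2$) and the \emph{singleton} $x$ (speed~$1$): the gap between them \emph{decreases} by one per step, from $n-5$ at $t=1$ down to $0$ at $t=n-4$, and they merge into the final $(2,1)$-shape at $t=n-3$. The separation between $x$ and the long run is never the issue, since it widens by $n-4$ per step and the column condition $1<x$ already holds. This reframing also handles the lower bound: you need only note that the $t=n-4$ configuration differs from the $t=n-3$ one, rather than checking every intermediate time. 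One genuinely nontrivial ingredient you omit is the inequality $x<w_2$, without which the pair and singleton need not interact at $t=n-3$; the paper deduces it by contraposition from Lemma~\ref{lem:w is not the union of increasing subsequences}.
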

\begin{proof}
Suppose $w=w_1\dots w_n \in S_n(\Qmaxtime)$ is the box-ball configuration at time $0$. 
We will show that $w$ first reaches steady state at time $t=n-3$. 

Let $j$ be the smallest number in $\{3,4,\dots,n-1 \}$ such that $w_n < w_j$. 
We claim that the box-ball configuration at time $t=1$ is 
\begin{equation}
\label{thm:n minus 3 copy:configuration at t=1}
e \, e \, \underbrace{w_1 \, w_2}_{\substack{\text{increasing}\\ \text{block}}} \, 
\overbrace{e \, e \, e \, \dots \, e}^{\substack{n-5\\ \text{ copies}}} 
\, x \, \underbrace{1 \, y_1 \, y_2 \, \dots \, y_{n-4}}_{\text{increasing block}},
\end{equation}
where $x=w_j$, there are $(n-5)$ copies of $e$ between $w_2$ and $x$, 
and $y_1 < y_2 < \dots < y_{n-4}$.

To prove this claim, 
 consider the following cases.  
Due to 
 Lemma~\ref{lem: these five are the only cases}, these five cases cover all possibilities. 
\begin{enumerate}
\item   
$w_n=1$
\item $w_3=1$ and $w_n=2$
\item 
$w_3=1$, $w_1=2$, and $w_n = 3$ 

\item 
$w_3=1$, $w_1 =2$, and $w_4=3$

\item 
$w_3=1$ and $w_4 =2$
\end{enumerate}

First, suppose $w_n=1$. 
Lemma~\ref{lem:inverse RS} tells us that 
 $w_3$ is smaller than each $w_i$ except for $w_n=1$, so we must have $w_3=2$ and $j=3$: 
\[
w_1 \, w_2 
\underbrace{w_3}_{2} w_4 \, w_5 \, \dots \, w_{n-1} \underbrace{\, w_n \,}_{1}. \qquad \qquad\qquad 
\] 
Since $w_1<w_2$ and $w_4 < w_5 < \dots < w_{n-1}$ and since $w_4 < w_2$, 
applying one box-ball move to~$w$ results in the configuration 
\[
\qquad \qquad e \, e \, w_1 \, w_2 \, \overbrace{e \, e \, e \, \dots \, e}^{n-5} \underbrace{w_3}_{x} \, 1 \, w_4 \, w_5 \, \dots \, w_{n-1}
\]
where 
there are $(n-5)$ copies of $e$ between $w_2$ and $x=w_3=2$. 


Second, suppose $w_3=1$ and $w_n=2$: 
\[
w_1 \, w_2 
\underbrace{w_3}_{1} w_4 \, w_5 \, \dots \, w_{n-1} \underbrace{ w_n}_{2}. \qquad \qquad \quad
\] 
Since $w_1<w_2$ and $w_4 < w_5 < \dots < w_{n-1}$ and since $w_4 < w_2$, 
applying one box-ball move to~$w$ results in the configuration 
\[
\qquad \qquad \qquad
e \, e \, w_1 \, w_2 \, \overbrace{e \, e \, e \, \dots \, e}^{n-5\text{ copies}} \underbrace{\, w_4 \,}_{x} 1 \, 2 \, w_5 \, w_6 \, \dots \, w_{n-1}
\]
where there are $(n-5)$ copies of $e$ between $w_2$ and $x=w_4$. 
In this case, $w_3=1$ is not bigger than $w_n=2$, 
but $w_4$ must be bigger than $w_n=2$ since $w_4 \notin \{ 1,2\}$, 
so $j=4$.


Third, suppose $w_3=1$ and $w_1=2$ and $w_n=3$. 
Lemma~\ref{lem:inverse RS} tells us that 
 $w_4$ is smaller than each of the $w_i$ (except for $w_3=1$, $w_1=2$, and $w_n=3$), so $w_4$ must be $4$:
\[
\underbrace{w_1}_{2} \, w_2 
\underbrace{w_3}_{1} \underbrace{w_4}_{4} \, w_5 \, \dots \, w_{n-1} \underbrace{ w_n}_{3}.
\qquad \qquad \qquad
\] 
Using the same reasoning as in the previous two cases, 
applying one box-ball move to $w$ results in the configuration 
\[
\qquad \qquad \qquad
e \, e \, \underbrace{w_1}_{2} \, w_2 \, \overbrace{e \, e \, e \, \dots \, e}^{n-5 \text{ copies}} \underbrace{\, w_4 \,}_{x} 1 \, w_n \, w_5 \, w_6 \, \dots \, w_{n-1}
\]
where there are $(n-5)$ copies of $e$ between $w_2$ and $x=w_4$. 
In this case, $j=4$ since $w_3=1$ is not larger than $w_n=3$ but $w_4=4$ is.


Finally, 
suppose
we have one of the last two cases, so $w_3=1$ 
and $w_4 < w_n$: 
\[
w_1 \, w_2 
\underbrace{w_3}_{1} w_4 \, w_5 \, \dots \, w_{n-1} \underbrace{\, w_n \,}_{\substack{\text{larger}\\ 
\text{than } w_4} } 
\qquad \qquad \qquad \qquad \qquad
\] 
Since $w_1<w_2$ and $w_4 < w_5 < \dots <
w_{j-1} < w_n < w_j < \dots
< w_{n-1}$ and since $w_4 < w_2$, 
applying one box-ball move to $w$ results in the configuration 
\[
\qquad \qquad \qquad \qquad \quad
e \, e \, w_1 \, w_2 \, \overbrace{e \, e \, e \, \dots \, e}^{n-5 \text{ copies}} \underbrace{\, w_j \,}_{x} 1 \, w_4 \, w_5 \, \dots \, w_{j-1} \, w_n \, w_{j+1} \, \dots \, w_{n-1}
\]
where there are $(n-5)$ $e$'s between $w_2$ and $x=w_j$. 
In this case, $j \geq 5$ since $w_4$ is smaller than $w_n$. 
This concludes the proof of our claim that the box-ball configuration at time $t=1$ is as given in~\eqref{thm:n minus 3 copy:configuration at t=1}. 


Now we perform another box-ball move to reach the configuration at $t=2$. 
If $n>5$, in the configuration at $t=2$, there are $(n-6)$ $e$'s between $w_2$ and $x$: 
\[
e \, e \, e \, e \, w_1 \, w_2 \,
\overbrace{e \, e \, \dots \, e}^{\substack{n-6\\\text{copies}}} 
\,\, 
x
\,\,
\overbrace{
e \, e \, \dots e}^{\substack{n-4\\\text{copies}}} \, 
\underbrace{ 1 \, y_1 \, y_2 \, \dots \, y_{n-4}}_{\text{increasing block}}.
\] 
In fact, at each BBS move, the increasing sequence $w_1,w_2$ moves together two spaces to the right, the singleton $x$ moves one space to the right, and the increasing sequence $1, y_1 , y_2 \dots , y_{n-4}$ moves $n-3$ spaces to the right.
So the number of $e$'s between $w_2$ and $x$ decreases by $1$ after each BBS move. 
The configuration at $t=n-4$ is 
\[
\dots \, e \, e \, e \, w_1 \, w_2 \, x \, e \, \, e \, e \, \dots \, e \, e \, e \, \underbrace{1 \, y_1 \, y_2 \, \dots \, y_{n-4}}_{\text{increasing block}}.
\]

We claim that \[x<w_2 ,\]
which we now prove. 
Recall that $x=w_j$, where $j$ is the smallest number in $\{3,4,\dots,n-1 \}$ such that $w_n<w_j$. 
If $w_2 < w_j$, then $w_1< w_2 < w_j < w_{j+1} < \dots < w_{n-1}$ 
and the remaining $w_i$'s form two increasing subsequences of $w$ whose union is $w$. 
This contradicts Lemma~\ref{lem:w is not the union of increasing subsequences}, so indeed $x < w_2$. 

Since $x<w_2$, we have either $x<w_1<w_2$ or $w_1 < x < w_2$. 
If $x<w_1<w_2$, then the configuration at $t=n-3$ is 
\[
\dots \, e \, e \, w_1 \, \underbrace{x \, w_2}_{\substack{\text{increasing}\\\text{block}}} 
\, e \, e \, \dots \, e \, e \, \, \underbrace{1 \, y_1 \, y_2 \, \dots \, y_{n-4}}_{\text{increasing block}}.
\]

If $w_1 < x < w_2$, 
then the configuration at $t=n-3$ is 
\[
\dots \, e \, e \, w_2 \, \underbrace{w_1\, x}_{\substack{\text{increasing}\\\text{block}}} \, e \, e \, \dots \, e \, e \, \, \underbrace{1 \, y_1 \, y_2 \, \dots \, y_{n-4}}_{\text{increasing block}}. 
\]

Either way, 
the configuration array at $t=n-3$ is a standard skew tableau whose rows have length $n-3$, $2$, and $1$. 
By Proposition~\ref{prop:t=0 generalization}, 
the configuration at $t=n-3$ is in steady state. 

The configuration at $t=n-4$ is not yet in steady-state, as the relative positions of $w_1, w_2$, and~$x$ in the configuration at $t=n-4$ differ from the configuration at $t=n-3$. 
Therefore,~${t=n-3}$ is the minimum steady-state time of $w$.
\end{proof}

\section{Knuth moves}
\label{sec:knuth}

We study how types of Knuth moves (Definition~\ref{def: Knuth moves}) play a role in a box-ball system. 
In Section~\ref{sec: knuth paths}, 
 we prove that
 a non-$K_B$ Knuth move 
preserves the shape of a soliton decomposition 
and that 
 a $K_B$ move 
 changes it 
 (Theorem~\ref{thm:knuth paths}). 
In Section~\ref{sec: t=1}, we prove that every permutation which is one non-$K_B$ Knuth move from a row reading word has steady-state time~$1$~(Theorem~\ref{thm:t=1}).

\subsection{Soliton decompositions are preserved by certain
Knuth moves}
\label{sec: knuth paths}

Using the localized version of Greene's Theorem 
given in Section~\ref{sec:local Greene's theorem}, 
we prove a partial characterization of 
 the shape of 
 $\SDself$ 
in terms of types of Knuth moves.

\begin{theorem}
\label{thm:knuth paths}
Suppose $\pi$ and $w$ are two permutations in the same Knuth equivalence class. 
\begin{enumerate}
\item 
\label{thm:knuth paths:KB}
If $\pi$ and $w$ are related by a sequence of Knuth moves containing an odd number of $K_B$ moves, then $ \SD{\pi}
\neq
\SD{w}$. 

\item \label{thm:knuth paths:non-KB}
If $\pi$ and $w$ are related by a sequence of non-$K_B$ Knuth moves, then $\sh\SD{\pi}=
\sh\SD{w}$. 
\end{enumerate}
\end{theorem}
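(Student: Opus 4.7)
Both parts rest on the localized Greene's theorem (Lemma~\ref{lem:local Greene's theorem}), which recovers $\sh\SD{v}$ from the statistics $\localincr_k(v)$ and $\localdecr_k(v)$.

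For part~\eqref{thm:knuth paths:KB}, I plan to track the descent count $d(v) \coloneqq \#\{i : v_i > v_{i+1}\} = \localdecr_1(v) - 1$. A direct inspection of the four-letter window shows that a single $K_B$ move changes $d$ by exactly $\pm 1$: the pattern $y_1 x z y_2$ has two internal descents, $(y_1,x)$ and $(z,y_2)$, whereas $y_1 z x y_2$ has just one, $(z,x)$, and the descent statuses at the boundaries with $\alpha$ and with $\beta$ beyond $y_2$ are unchanged because $y_1$ and $y_2$ are fixed. By contrast, a single proper $K_1$ or proper $K_2$ move preserves $d$: both triples $y_1 x z$ and $y_1 z x$ contribute one internal descent, and the non-$K_B$ hypothesis that the letter $b$ of $\beta$ adjacent to the triple satisfies $b < x$ or $b > z$ forces the two boundary comparisons $z > b$ and $x > b$ to have the same truth value. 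Hence the net change in $d$ along a Knuth sequence equals the signed sum of the $\pm 1$'s contributed by the $K_B$ moves; an odd number of $K_B$ moves produces an odd, hence nonzero, total change. Therefore $\localdecr_1(\pi) \neq \localdecr_1(w)$, so $\sh\SD{\pi} \neq \sh\SD{w}$ by Lemma~\ref{lem:local Greene's theorem}, and in particular $\SD{\pi} \neq \SD{w}$.

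For part~\eqref{thm:knuth paths:non-KB}, I would induct on the length of the Knuth sequence, reducing to the claim that a single proper $K_1$ or proper $K_2$ move preserves $\localincr_k$ and $\localdecr_k$ for every $k \geq 1$; Lemma~\ref{lem:local Greene's theorem} then yields $\sh\SD{\pi} = \sh\SD{w}$. In the $K_1$ case $\pi = \alpha y x z \beta$ and $w = \alpha y z x \beta$, with $x < y < z$ and $b \notin (x,z)$, the key observation is: for any consecutive substring $u$ of $\pi$ containing all three positions of the triple, and its corresponding substring $u'$ of $w$, one has $\incr(u) = \incr(u')$. The inequality $\incr(u) \geq \incr(u')$ is immediate, since any increasing subsequence of $u'$ uses at most one of $\{x, z\}$ (as $z$ precedes $x$ in $u'$) and therefore is also an increasing subsequence of $u$; the reverse inequality follows by taking any increasing subsequence of $u$ that uses both $x$ and $z$ and replacing the letter $x$ with $y$, which gives an increasing subsequence of $u'$ of the same length. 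This handles $k$-block partitions whose block boundaries avoid the triple. For partitions with a boundary strictly inside the triple, I will construct a matching partition on the other side by shifting that boundary by one position; the non-$K_B$ condition on $b$ is what makes this shift preserve $\sum \incr(u_j)$. A dual argument, using disjoint (not necessarily consecutive) subsequence partitions and descent counts in place of consecutive blocks and $\incr$, handles $\localdecr_k$.

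The main obstacle will be the boundary-shift step for $\localincr_k$: verifying that shifting a block boundary across the triple preserves the partition's $\sum \incr$ requires a subcase analysis on whether $b < x$ or $b > z$ and on where the $\incr$ value of each affected block is realized. The proper $K_2$ case then follows from the analogous argument with the roles of $\alpha$ and $\beta$ interchanged.
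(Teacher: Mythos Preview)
Your approach to part~\eqref{thm:knuth paths:KB} is correct and matches the paper's: both arguments track the descent count (equivalently $\localdecr_1 - 1$) and observe that each $K_B$ move changes it by exactly $\pm 1$. You are in fact slightly more careful than the paper in explicitly checking that proper $K_1$ and proper $K_2$ moves leave the descent count fixed, which is needed because the Knuth sequence in the hypothesis may contain non-$K_B$ moves as well. (One small point: your boundary check is phrased only for the right neighbour $b$, which handles proper $K_1$; for proper $K_2$ the analogous check is on the left neighbour.)

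For part~\eqref{thm:knuth paths:non-KB} the paper takes a more economical route than you propose: it works \emph{only} with the statistics $\localdecr_k$. Since $(\localdecr_1, \localdecr_2-\localdecr_1,\ldots)$ is the conjugate of $\sh\SD{\cdot}$ by Lemma~\ref{lem:local Greene's theorem}, invariance of all $\localdecr_k$ already forces invariance of the shape, so your separate treatment of $\localincr_k$ is redundant. The paper's $\localdecr_k$ argument is precisely what you call the ``dual argument'' and leave unsketched: given an optimal decomposition $u_1\sqcup\cdots\sqcup u_k$ of one permutation into disjoint subsequences, one builds a decomposition of the other with at least the same $\sum_j \localdecr(u_j)$ by a short case analysis on which $u_j$'s contain the swapped letters; the key trick, when $a$ and $c$ lie in the same $u_j$ but $b$ lies in a different $u_i$, is to swap the tails of $u_j$ and $u_i$ at the triple. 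This subsequence-rearrangement is clean and sidesteps the boundary-shift casework you anticipate for $\localincr_k$. Your $\localincr_k$ plan is plausible, but (as you yourself flag) the shift step is delicate --- two block boundaries can fall inside the triple, and the correct adjustment is not literally a one-position shift in every subcase --- so I would recommend dropping the $\localincr_k$ half and instead writing out the $\localdecr_k$ argument in full.
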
 
\begin{proof} 
To prove part~\eqref{thm:knuth paths:KB}, 
we observe that a $K_B^+$ move 
decreases the number of descents by $1$,
and a $K_B^-$ move 
 increases the number of descents by $1$. 
Since 
the height the partition $\sh \SD{w}$
 is equal to 
 \begin{equation*}
 \text{
 $\localdecr_1(w) =  1 + |\{$descents of $w\}|$ 
 }
 \end{equation*}
 by Lemma~\ref{lem:local Greene's theorem},
it follows that applying an odd number of $K_B$ moves to $w$ changes $\sh \SD{w}.$

To prove part~\eqref{thm:knuth paths:non-KB}, 
suppose $x,y\in S_n$ are related by a proper $K_1$ or proper $K_2$ move.
Due to Lemma~\ref{lem:local Greene's theorem}, 
it suffices to prove that $\localdecr_k(x) = \localdecr_k(y)$ for all $k$.
This breaks down into two main cases: 
case~\eqref{thm:knuth paths:non-KB:K1}, where $y= K_1^+(x)$,  
and case~\eqref{thm:knuth paths:non-KB:K2}, where $y=K_2^+(x)$. 
These further divide into the following subcases, where $a<b<c$ in all cases: 
 \begin{enumerate}[i.]
\item
\label{thm:knuth paths:non-KB:K1}
\begin{enumerate}
\item 
\label{thm:knuth paths:non-KB:K1:bacd}
$y=\cdots bca$ \qquad or \qquad $y=\cdots bcad\cdots$ with $c<d$ \\
$x=\cdots bac$ \qquad
or \qquad
$x=\cdots bacd\cdots$

\item 
$y=\cdots bca$
\qquad 
or
\qquad 
$y=\cdots bcaa' $ with $a' <a$\\
$x= \cdots bac$
\qquad 
or
\qquad 
$x= \cdots baca'$
\end{enumerate}

\item
\label{thm:knuth paths:non-KB:K2}
\begin{enumerate}
\item 
$y=cab\cdots$
\qquad 
or
\qquad 
$y=\cdots dcab\cdots$ with $c<d$
\\
$x=acb\cdots$
\qquad 
or
\qquad 
$x=\cdots dacb\cdots$
\item 
$y=cab\cdots$
\qquad 
or
\qquad 
$y=\cdots a'cab\cdots$ with $a'<a$\\
$x=acb\cdots$
\qquad 
or
\qquad 
$x=\cdots a'acb\cdots$
\end{enumerate}
\end{enumerate}
 
The proofs are similar for each case. 
We include a partial proof of case~\eqref{thm:knuth paths:non-KB:K1:bacd}. 
Suppose 
\begin{align*}
y&=\cdots bca\\
x&=\cdots bac
\end{align*}
or 
\begin{align*}
y &=\cdots bcad\cdots\\
x &=\cdots bacd\cdots
\end{align*}
where $a<b<c<d$. 
The idea is to show that $\localdecr_k(y)\leq \localdecr_k(x)$ and $\localdecr_k(x)\leq \localdecr_k(y)$ for all $k,$ from which the result follows. 

Let $k \geq 1$. 
To show $\localdecr_k(y)\leq \localdecr_k(x)$, suppose that $u_1,\dots,u_k$ are disjoint subsequences of $y$ such that 
$$
\localdecr_k(y) = \localdecr(u_1) + \cdots + \localdecr(u_k).
$$ 
We will produce disjoint subsequences $u_1',\dots,u_k'$ of $x$ where 
\[
\localdecr(u_1)+\cdots + \localdecr(u_k)\leq \localdecr(u_1')+\cdots + \localdecr(u_k').
\]
First, suppose that $c$ and $a$ are in different subsequences. 
Then set $u_i'\vcentcolon=u_i$ for each $1\leq i\leq k.$ 
Since 
$\localdecr(u_1)+\cdots + \localdecr(u_k) 
=
\localdecr(u_1')+\cdots + \localdecr(u_k')$,  
we have $\localdecr_k(y)\leq \localdecr_k(x)$.
    
Next, suppose that $b,c,$ and $a$ are in the same subsequence $u_j$ of $y$. 
Define $u_j'$ to be the subsequence of $x$ which is obtained from $u_j$ by swapping $c,a$ with $a,c$. Define $u_i'\vcentcolon= u_i$ for all~$i\neq j.$ 
Then,
since $a<b<c$, we have 
\[\localdecr(u_j) = \localdecr(\dots,b,c,a,\dots)\leq \localdecr(\dots,b,a,c,\dots)=\localdecr(u_j'),\]
so $\localdecr_k(y)\leq \localdecr_k(x).$
    
Lastly, suppose that $c$ and $a$ are in the same subsequence, say $u_1,$ and $b$ is in a different subsequence, say $u_2.$ 
Write $u_1$ as a concatenation 
\[u_1 = \underbrace{(\, \dots,c)}_{u_1^1} \sqcup \underbrace{(a,\dots \,)}_{u_1^2}\]
of two subsequences $u_1^1$ and $u_1^2,$ respectively. 
Write $u_2$ as a concatenation 
\[u_2 = \underbrace{(\, \dots,b)}_{u_2^1} \sqcup \underbrace{(\, \dots \,)}_{u_2^2} \]
of two subsequences $u_2^1$ and $u_2^2,$ respectively.
Define 
\begin{align*}
u_1'&\vcentcolon=u_2^1 \sqcup u_1^2 ={(\, \dots,b)} \sqcup {(a,\dots \,)},\\ 
u_2'&\vcentcolon= u_1^1 \sqcup u_2^2 = {(\, \dots,c)} \sqcup {(\, \dots \,)},
\end{align*}
and $u_i'\vcentcolon=u_i$ for all $i\notin \{ 1,2 \}$.  
Then, since $a<b<c$, 
\[\localdecr(u_1) + \localdecr(u_2)\leq \localdecr(u_1') + \localdecr(u_2'),\] 
so  $\localdecr_k(y)\leq \localdecr_k(x)$. 
The proof of the reverse inequality $\localdecr_k(x)\leq \localdecr_k(y)$ is similar. 
\end{proof}

Theorem~\ref{thm:knuth paths} allow us to use Knuth moves to find a subset of permutations whose soliton decomposition and RS insertion tableau coincide.

\begin{corollary}[Corollary of
Theorem~\ref{thm:tfae} and
Theorem~\ref{thm:knuth paths}]
\label{cor:Knuth moves preserve SD}
Let $w\in S_n$, let $T=\P{w}$, and let~$r$ be the row reading word of $T$.
\begin{enumerate}
\item
If $w$ is related to $r$ 
by a sequence of Knuth moves 
containing an odd number of $K_B$ moves,  
 then $\SD{w}\neq \P{w}=T$. 
\item 
If $w$ is related to $r$ 
by a sequence of 
non-$K_B$ moves, 
then $\SD{w}=\P{w}=T$. 
 \end{enumerate}
\end{corollary}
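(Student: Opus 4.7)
The plan is to reduce both parts of the corollary to the single identity $\SD{r}=T$, after which Theorem~\ref{thm:knuth paths} does essentially all of the work. Since $r$ is the row reading word of the standard tableau $T=\P{w}$, Proposition~\ref{prop:t=0} (and the remark immediately following it) tells us that $r$ already sits in steady state and that $\SD{r}=T$; concretely, the rows of $T$ read from bottom to top are precisely the solitons of $r$ read from left to right, and the stacking convention defining $\SDself$ therefore recovers $T$ itself. Moreover, by~\eqref{eq:if r is reading word of T then P(r)=T} we have $\P{r}=T=\P{w}$, so $r$ and $w$ are Knuth equivalent, which is what makes the hypothesis of each part of the corollary meaningful in the first place.

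For part (1), I would apply Theorem~\ref{thm:knuth paths}\eqref{thm:knuth paths:KB} with the two permutations taken to be $r$ and $w$: if $w$ and $r$ are connected by a sequence of Knuth moves containing an odd number of $K_B$ moves, the theorem yields $\SD{w}\neq \SD{r}$, and substituting the identity $\SD{r}=T=\P{w}$ gives $\SD{w}\neq \P{w}$ as required.

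For part (2), I would apply Theorem~\ref{thm:knuth paths}\eqref{thm:knuth paths:non-KB} with the same choice $\pi=r$: a sequence of non-$K_B$ Knuth moves connecting $w$ and $r$ forces
\[
\sh\SD{w}=\sh\SD{r}=\sh T=\sh\P{w}.
\]
This is precisely condition~\eqref{tfae:itm:three} of Theorem~\ref{thm:tfae}, so the implication \eqref{tfae:itm:three}$\Rightarrow$\eqref{tfae:itm:one} of that theorem delivers $\SD{w}=\P{w}=T$, finishing the corollary. The only non-citation step in the whole argument is unpacking Proposition~\ref{prop:t=0} to identify the solitons of $r$ with the rows of $T$, and even that is immediate from the definitions, so I do not expect any genuine obstacle beyond correctly invoking the two theorems with $\pi=r$.
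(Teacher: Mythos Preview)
Your argument is correct and is exactly the derivation the paper has in mind: the corollary is stated without proof as an immediate consequence of Theorem~\ref{thm:tfae} and Theorem~\ref{thm:knuth paths}, and you have supplied precisely those citations together with the identification $\SD{r}=T$ from Proposition~\ref{prop:t=0}. There is nothing to add or change.
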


\begin{example}
The permutations $362514$ and $632514$ are
the reading words of the tableaux 
\[\ytableausetup{centertableaux, boxsize = 1.3em}
\quad\quad
\begin{ytableau}
 1 & 4 \\
 2 & 5\\
 3 & 6
 \end{ytableau} ~~ \text{ and } ~~
 \begin{ytableau}
 1 & 4 \\
 2 & 5\\
 3 \\
 6
 \end{ytableau} ~~ \text{ respectively}.
\]
Figure~\ref{fig:362514 with SD} (respectively, Figure~\ref{fig:knuth shape 2211})
 shows all permutations in the Knuth equivalence class \linebreak of~$r=362514$ (respectively, $r=632514$). 
The corresponding soliton decomposition is drawn next to each permutation. 
An edge with label $K_1$ (respectively, $K_2$) indicates that 
the Knuth move is a proper $K_1$  (respectively, $K_2$) move.
An edge with label $K_B$ indicates that the Knuth move is 
both $K_1$ and $K_2$. 
The permutations are arranged such that they form a subdiagram of the Hasse diagram of the right weak order\footnote{For definition of the right weak order, see, for example, the textbook~\cite[Section~3.1]{BB05}.} on the symmetric group $S_6$. 
\end{example}

\begin{figure}[p]
\centering 
\begin{tikzpicture}[yscale=1.1]
\node (top) at (0,0) {$\mathbf{r=362514}$, $t=0$};

\node at (2.3,0.1)
{$\small \young(14,25,36)$};

\node (left) at (-1.5,-1.5) {$362154$, $t=2$};
\node at (-3.4,-1.5) {$\small \young(14,25,6,3)$};
\node (right) at (1.6,-1.5) {$326514$, $t=1$};
\node at (3.4,-1.5) {$\small \young(14,25,6,3)$};
\node (bottom) at (0,-3) {$326154$, $t=2$};
\node at (-1.8,-3.4) {$\small \young(14,25,6,3)$};
      
\node (mostbottom) at (0,-4.5) {$321654$, $t=1$};
\node at (1.8,-4.4) {$\small\young(14,5,6,2,3)$};  
\draw [red, thick, shorten <=1pt, shorten >=1pt] (top) -- (left) node[pos=0.5, left]{$K_B$};
\draw [red, thick, shorten <=1pt, shorten >=1pt] (top) -- (right) node[pos=0.5, right]{$K_B$};
  
\draw [blue, thick, shorten <=1pt, shorten >=1pt] (bottom) -- (left) node[pos=0.5, left]{$K_1$};
\draw [blue, thick, shorten <=1pt, shorten >=1pt] (bottom) -- (right) node[pos=0.5, right]{$K_2$};
  
\draw [red, thick, shorten <=1pt, shorten >=1pt] (bottom) -- (mostbottom) node[pos=0.5, right]{$K_B$};
\end{tikzpicture}
\caption{The Knuth equivalence class of $r=362514$, with their soliton decompositions and steady-state times.}
\label{fig:362514 with SD}
\label{fig:362514 with ss}

\end{figure}
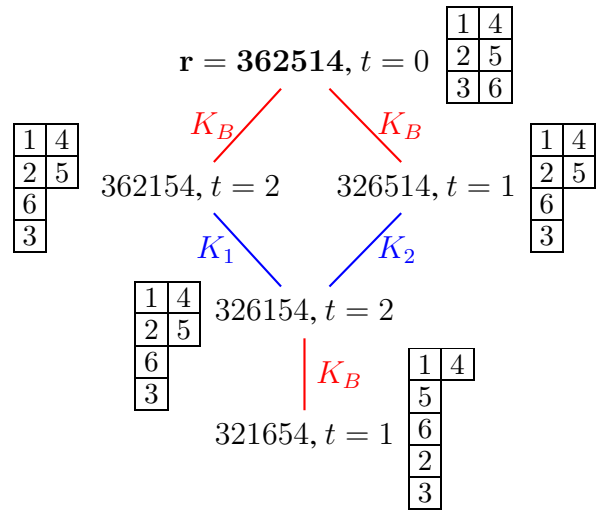
\begin{figure}

\begin{tikzpicture}[xscale=1.5,yscale=1.45]

\node (1) at (0,0) {$326541$, 
 $t=1$}; 
 \node at (-1.1,0) {\young(14,2,5,6,3)};

\node (2) at (0,2) {$362541$, $t=2$};
\node at (-1.3, 2) {\young(14,25,3,6)};

\node (4) at (2,4) {$365241$, $t=2$};
\node at (0.7, 4) {\young(14,25,3,6)};

\node (3) at (2,2) {$365214$, $t=1$};
\node at (2, 1.1) {\young(14,25,3,6)};

\node (8) at (4,4) {$635214$, $t=1$};
\node at (4, 2.9) {\young(14,25,3,6)};

\node (9) at (4,6) {$635241$, $t=2$};
\node at (2.7, 6) {\young(14,25,3,6)};

\node (7) at (6,4) {$632541$, $t=1$};
\node at (7.3, 4) {\young(14,2,5,3,6)};

\node (6) at (6,2) {$\mathbf{r=632514}$, $t=0$};
\node at (7.6, 2) {\young(14,25,3,6)};

\node (5) at (6,0) {$632154$, $t=1$};
\node at (7.3, 0) {\young(14,5,2,3,6)};

\draw [red, thick 
] (1) -- (2) node[pos=0.5, right]{$K_B$};

\draw [blue, thick] (2) -- (4) node[pos=0.4, right]{$K_2$};
\draw [blue, thick] (3) -- (4) node[pos=0.55, right]{$K_1$};
\draw [blue, thick] (3) -- (8) node[pos=0.4, right]{$K_2$};
\draw [blue, thick] (8) -- (9) node[pos=0.45, right]{$K_1$};
\draw [blue, thick] (4) -- (9) node[pos=0.4, right]{$K_2$};

\draw [blue, thick] (6) -- (8) node[pos=0.55, right]{$K_1$};

\draw [red, thick] (7) -- (9) node[pos=0.55, right]{$K_B$};

\draw [red, thick] (5) -- (6) node[pos=0.5, right]{$K_B$};

\end{tikzpicture}
\caption{The Knuth equivalence class of $r=632514$, with their soliton decompositions and steady-state times.}
\label{fig:knuth shape 2211}
\end{figure}
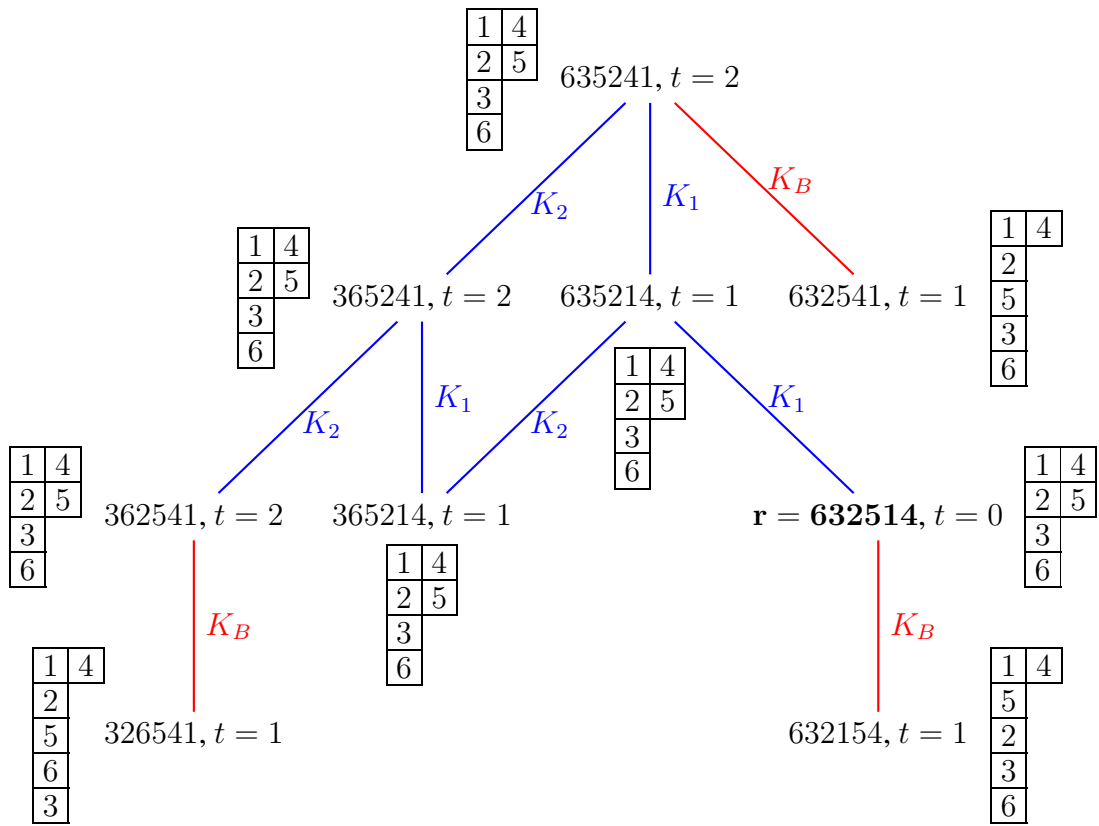

\subsection{Permutations one proper Knuth move from a row reading word}
\label{sec: t=1}

\begin{theorem}\label{thm:t=1}
Let $r$ be the row reading word of a standard tableau. 
Suppose $w$ is a permutation 
which is related to $r$ by 
one proper $K_1$ move or one proper $K_2$ move. 
Then $w$ has steady-state time~$1$.
\end{theorem}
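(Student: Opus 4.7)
My plan is to treat the proper $K_1$ case in detail; the proper $K_2$ case will be handled by the symmetric argument. I must show (a) $w$ is \emph{not} already in steady state, and (b) the configuration of $w$ at $t=1$ \emph{is} in steady state. Both will be checked against the characterization of Proposition~\ref{prop:t=0 generalization}: a configuration is in steady state iff its configuration array is a standard skew tableau with weakly decreasing row lengths.

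Structural analysis and part (a): I write $r$ as the row reading word of a standard tableau $T$ with rows $R_1,\dots,R_s$ (top to bottom). Since descents of $r$ lie only between consecutive rows, any $K_1$-pattern $yxz$ with $x<y<z$ at positions $p,p+1,p+2$ of $r$ must have $y$ equal to the last entry of some row $R_{k+1}$ and $(x,z)=(R_k[1],R_k[2])$. The hypothesis $y<z$ combined with column-strictness of $T$ forces $y$ into column~$1$; hence $R_{k+1}$ is a singleton, and by weak decrease of row lengths so are $R_{k+1},\dots,R_s$. (Properness of the $K_1$ is then automatic from column-strictness.) A straightforward inspection of the maximal increasing runs of $w$ at $t=0$ yields the rows of its configuration array as $R_1,\dots,R_{k-1},\,(x,R_k[3],\dots,R_k[|R_k|]),\,(y,z),\,(R_{k+2}[1]),\dots,(R_s[1])$. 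If $|R_k|=2$ the row-length sequence contains the subsequence $(1,2)$, violating weak decrease; if $|R_k|\geq 3$ then $R_k[3]>z$ sits directly above $z$ in the array, violating standardness. Either way $w$ fails Proposition~\ref{prop:t=0 generalization}, so $w$ is not in steady state.

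Part (b): I run the carrier algorithm of Algorithm~\ref{alg:carrier} on $r$ and $w$ in parallel. Up through the insertion of $y$ the two runs are identical, producing the common carrier state $(y,e,\dots,e)$. The key local calculation is that inserting $x$ and then $z$ (the $r$-order) and inserting $z$ and then $x$ (the $w$-order) both leave the carrier equal to $(x,z,e,\dots,e)$; the two ejected symbols are $y$ and one copy of $e$ in both cases, differing only in their order. From this point on the two runs coincide, so their remaining ejections agree. Consequently $w'$ (the configuration of $w$ at $t=1$) differs from $r'$ (that of $r$ at $t=1$) only by transposing the entries $y$ and $e$ at positions $p+1$ and $p+2$. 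Since $r$ is in steady state, $r'$ satisfies Proposition~\ref{prop:t=0 generalization}; the block decomposition of $w'$ agrees with that of $r'$ except that the singleton block $(y)$ is relocated by one position, so the row lengths of $w'$'s array are unchanged and weak decrease is inherited from $r'$. The main obstacle is showing that this local swap preserves standardness of the skew shape: it modifies the two gaps flanking the singleton block $(y)$ by $\pm 1$ each, so the shape does change. I expect to resolve this by a short case split on whether $k+1=s$ or $k+1<s$ and on whether $|R_k|=2$ or $|R_k|\geq 3$, using the explicit positions of entries in $r'$ extracted from the carrier computation to verify that the relocated $y$ lands in a column where no column-inequality is disturbed. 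Combining (a) and (b) gives that $w$ first reaches steady state at $t=1$.
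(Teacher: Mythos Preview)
Your approach matches the paper's: both rely on the structural lemma pinning down where a proper $K_1$ (resp.\ $K_2$) move can occur on a row reading word, then run the carrier algorithm and invoke Proposition~\ref{prop:t=0 generalization}. Your parallel-carrier idea---comparing the ejection streams of $r$ and $w$ rather than computing $w'$ outright---is a pleasant shortcut over the paper's direct identification of $w'$; it reduces part~(b) to checking that moving the singleton block $(y)$ one box to the right preserves the separation condition, and the case split you anticipate is indeed short. Increasing the gap to the left of $(y)$ from $0$ to $1$ makes that condition vacuous (since the left block has length~$1$); decreasing the gap to the right from $|R_k|-1$ to $|R_k|-2$ introduces a new check only when $|R_k|=2$, namely $x<y$, which holds. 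So this part just needs to be written out.

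There is one genuine omission in your case analysis. You treat only the pattern $yxz$ in $r$, i.e., the $K_1^+$ direction, but a proper $K_1$ move could a priori also be $K_1^-$ (pattern $yzx\mapsto yxz$ in $r$). The paper disposes of this with a separate short argument (its Lemma~\ref{lem: K_1^+ Config}\eqref{lem: K_1^+ Config:K_1^-}): any $K_1^-$ applied to a row reading word is automatically $K_B$, because the entry of $r$ immediately following $x$ is forced by row- and column-strictness of $T$ to lie strictly between $x$ and $z$. Without this, your structural analysis does not cover all proper $K_1$ moves. The ``symmetric'' $K_2$ case likewise needs its companion impossibility statement---the paper shows that $K_2^+$ is outright impossible on a row reading word---so the two surviving directions are $K_1^+$ and $K_2^-$, which are parallel but not literally symmetric; each requires its own (easy) elimination lemma.
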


If $w$ is one $K_B$ move from the row reading word 
of a standard tableau, then the steady-state time of $w$ may be $1$ or greater than $1$. 
See Example~\ref{ex:362514 with ss}. 
\begin{example}
\label{ex:362514 with ss}
The corresponding steady-state times are given next to each permutation in the two Knuth equivalence classes of Figures~\ref{fig:362514 with ss} and~\ref{fig:knuth shape 2211}.

In Figure~\ref{fig:362514 with ss}, 
the permutation $362154$ is one $K_B^-$ move from $r$, and its steady-state time is~$t=2$.  
 Another permutation, $326514$, is also 
 one $K_B^-$ move from $r$, and 
 its steady-state time is $t=1$.

In Figure~\ref{fig:knuth shape 2211}, we can perform a $K_B^-$ move and also a proper $K_1^+$ move on $r$ (see Lemma~\ref{lem: K_1^+ Config}).
The permutation $635214$ is one proper $K_1^+$ move from $r$, and its steady-state time is $t=1$, illustrating Lemma~\ref{lem: K_1^+ ss time 1}. 
\end{example}

\subsubsection{Proof of Theorem~\ref{thm:t=1}}
Theorem~\ref{thm:t=1} follows from the following four lemmas.

\begin{lemma}\label{lem: K_1^+ Config}
Let $r=r_1 r_2 \dots r_n$ be the row reading word of a standard tableau $P$.
\begin{enumerate}
\item 
\label{lem: K_1^+ Config:K_1^-}
If one performs a $K_1^-$ move on $r$, the move is $K_B$. 

\item 
\label{lem: K_1^+ Config:P}
Suppose we are able to perform a $K_1^+$ move
$ yxz \mapsto yzx $ 
(where $x < y < z$) 
on $r$. 
If~$r_1\neq y$, we must have 
\begin{equation}
\label{eq:lem: K_1^+ Config}
r= \underbrace{r_1 \, \dots \, r_\ell \, y \, \, x }_{\text{decreasing}} \, z \, \dots \, r_{n-1} \, r_n 
\end{equation}
where $r_1 > r_2 > \dots > r_\ell > y > x$.
The tableau $P$ must be of the form given in Figure~\ref{fig: configK1+}, where 
the entry $y$ is in its own row, and the row immediately above $y$ starts with entries~$x,z$. 
    
\item 
\label{lem: K_1^+ Config:cannot be KB}
If one performs a $K_1^+$ move on $r$, the move is \emph{not} $K_B.$    

\end{enumerate}    
\end{lemma}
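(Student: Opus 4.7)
My plan is to exploit a simple structural observation about the reading word $r$ of a standard tableau $P$: two consecutive entries $r_i, r_{i+1}$ either lie in a common row of $P$ (yielding an ascent, since rows increase) or sit at a row transition (yielding a descent, since column strictness gives $P_{k,1} < P_{k+1,1} \leq P_{k+1,\text{last}}$). Combined with the fact that row lengths in a standard Young tableau weakly decrease from top to bottom, this observation reduces each part of the lemma to a short row/column bookkeeping argument.

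For part~(\ref{lem: K_1^+ Config:K_1^-}), I would take a $K_1^-$ pattern $yzx$ at positions $i, i+1, i+2$ with $x<y<z$. The ascent $y<z$ places $y$ and $z$ in a common row $k+1$, with $z$ in some column $j \geq 2$; the descent $z>x$ then makes $z$ the last entry of row $k+1$ and places $x$ at position $(k,1)$. Since row $k$ is at least as long as row $k+1$, the entry $P_{k,2}$ exists, and combining row and column increase yields $x = P_{k,1} < P_{k,2} < P_{k+1,2} \leq P_{k+1,j} = z$. Thus $r_{i+3} = P_{k,2}$ lies strictly between $x$ and $z$, which simultaneously realises the move as a $K_2^-$ move, so it is $K_B$.

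For part~(\ref{lem: K_1^+ Config:P}), I would run the same analysis on a $K_1^+$ pattern $yxz$: the descent $y>x$ gives $y = P_{k+1,\text{last}}$ and $x = P_{k,1}$, and the subsequent $z$ cannot start row $k-1$ (else column strictness would force $z < P_{k,1}=x$, contradicting $x<z$), so $z = P_{k,2}$. The crucial step is that row $k+1$ has only the entry $y$: otherwise $P_{k+1,2} \leq y$, and column strictness would give $z = P_{k,2} < P_{k+1,2} \leq y$, contradicting $y<z$. The weak decrease of row lengths then forces every row below row $k+1$ to be a singleton. Reading the column-$1$ entries of rows $h, h-1, \dots, k+2$ bottom-to-top produces the strictly decreasing prefix $r_1 > \dots > r_\ell > y > x$ by column strictness, giving the claimed form~(\ref{eq:lem: K_1^+ Config}) and the shape of $P$ depicted in Figure~\ref{fig: configK1+}.

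For part~(\ref{lem: K_1^+ Config:cannot be KB}), I would use the shape just produced. After $x$ and $z$ at positions $(k,1)$ and $(k,2)$, the next reading-word entry $r_{i+3}$ is one of three things: $P_{k,3}$ (which exceeds $z$ by row-increase), the first entry of row $k-1$ (which is strictly less than $x$ by column strictness, when row $k$ has exactly two entries), or nonexistent (when $k=1$ and row $k$ has exactly two entries). None of these lies strictly between $x$ and $z$, so the $K_1^+$ move cannot simultaneously be a $K_2^+$ move; it is visibly not a $K_2^-$ move, since $K_2^-$ replaces $zx$ by $xz$ while $K_1^+$ does the reverse. I expect the main obstacle to be the uniqueness argument in part~(\ref{lem: K_1^+ Config:P}) that pins down the position of $z$ and forces row $k+1$ and every row below to be a singleton; once that structural description is in hand, parts~(\ref{lem: K_1^+ Config:K_1^-}) and~(\ref{lem: K_1^+ Config:cannot be KB}) amount to checking a handful of inequalities.
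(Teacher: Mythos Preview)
Your proposal is correct and follows essentially the same approach as the paper's proof: both hinge on the observation that ascents in $r$ occur precisely within rows of $P$ while descents mark row transitions, and then use row/column monotonicity together with the weak decrease of row lengths to pin down where $x,y,z$ sit. Your coordinate-based bookkeeping is a bit more explicit than the paper's subtableau pictures, and your case ``$z$ starts row $k-1$'' in part~(\ref{lem: K_1^+ Config:P}) is in fact vacuous (any ascent forces the two letters into a common row), but this redundancy does no harm.
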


\begin{figure}[ht]
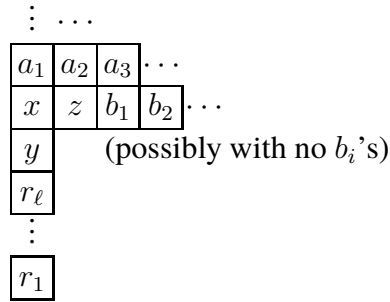

\centering
\[\begin{ytableau}
\none[\vdots] & \none[\cdots] \\
a_1 & a_2 & a_3 & \none[\cdots] \\
x & z & b_1 & b_2 & \none[\cdots] \\
y & \none & \none & \none & \none & \none[\text{(possibly with no $b_i$'s)}]\\
r_\ell\\
\none[\vdots]\\
r_1
\end{ytableau}
\]
\caption{General form of a standard tableau $P$ whose row reading word can undergo a $K_1^+$ move.}
\label{fig: configK1+}
\end{figure}

\begin{proof}
First, we prove part~\eqref{lem: K_1^+ Config:K_1^-} of the lemma. 
Suppose we perform a $K_1^-$ move  
$yzx \mapsto yxz$
(where $x<y<z$)
on $r$. 
Since $r$ is the row reading word of $P$, the tableau $P$ must contain a subtableau
\[
\ytableausetup{centertableaux, boxsize = 1.5em}
\begin{ytableau}
x & b \\
 y & z
\end{ytableau}
\qquad \text{ or } \qquad
\begin{ytableau}
x & \none & \none[\dots] & b\\
\none & \none[\dots] &y & z
\end{ytableau}.
\]
Since the rows and columns of $P$ are increasing, 
we must have
$x<b<z$. 
Thus, 
$r$ must contain a consecutive subsequence $yzxb'$ where $x<b' \leq b <z$, so
the $K_1^-$ move
$yzx \mapsto yxz$
is $K_B^-.$

Now suppose we perform a $K_1^+$ move 
$yxz \mapsto yzx$
on $r$. 
First, we prove part~\eqref{lem: K_1^+ Config:P}. 
\linebreak Since~$x<y<z$ and $P$ is standard, 
the entry
$y$ must be the only element in its row in $P$, that is, the rows of $P$ containing $x,y,z$ are of the form
\[\ytableausetup{centertableaux}
\begin{ytableau}
x & z&\none[\dots]\\
y
\end{ytableau}\]
If $r_1=y$, then we are done. 
Suppose $r_1 \neq y$,  and write $r=r_1 r_2 \dots r_\ell y x z \dots r_n$. 
Since the rows of $P$ are weakly decreasing in length, the rows of $P$ below $y$ are of size $1$. 
Since $P$ is standard, we have $r_1 > r_2 > \dots > r_\ell > y$. 
So $r$ is of the form given in~\eqref{eq:lem: K_1^+ Config} and $P$ is of the form given in Figure~\ref{fig: configK1+}.

Finally, 
to prove part~\eqref{lem: K_1^+ Config:cannot be KB} of the lemma, 
we prove that this $K_1^+$ move is not a $K_B$ move. 
If~$r_n = z$, then we know this $K_1^+$ move is not $K_B$. 
Suppose $r_n \neq z$, so $r=r_1 \dots yxzb \dots r_n$ for some $b$. 
Since $r$ is the row reading word of $P$,
either the entry $b$ is immediately above $x$ in $P$ or 
the entry $b$ 
is immediately to the right of $z$ in $P$:
$$
\ytableausetup{centertableaux}
\begin{ytableau}
b & \none[\cdots] \\
x & z \\
y
\end{ytableau}
\qquad\text{ or }\qquad
\ytableausetup{centertableaux}
\begin{ytableau}
x & z& b & \none[\cdots]\\
y
\end{ytableau}$$
Since $P$ is standard, either 
$b<x$ or
$z<b$. 
Either way, this $K_1^+$ move is not $K_B$. 
\end{proof}

\begin{lemma}\label{lem: K_2^- Config}
Let $r=r_1 r_2 \dots r_n$ be the row reading word of a standard tableau $P$.
\begin{enumerate}
\item 
\label{lem: K_2^- Config:K_2^+ is impossible}
It is impossible to perform a $K_2^+$ move on $r.$ 

\item 
\label{lem: K_2^- Config:P}
 
Suppose we are able to perform a $K_2^-$ move
$ zxy \mapsto xzy $
(where $x < y < z$)
which is not a $K_B$ move
on $r$. 
If $r_1 \neq z$, we have 
\begin{equation}
\label{eq:lem: K_2^- Config}
r= \underbrace{r_1 \dots r_\ell \, z \, x}_{\text{decreasing}} \, y \dots \, r_{n-1} \, r_n 
\end{equation}
where 
$r_1 > r_2 > \dots > r_\ell > z$. 
The tableau $P$ must be of the form given in Figure~\ref{fig: configK2+},
where 
the entry $z$ is in its own row, and the row immediately above $z$ starts with entries~$x,y$.
\end{enumerate}
\end{lemma}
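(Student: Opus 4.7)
The plan is to case-analyze how the three entries involved in the prospective Knuth move can appear as three consecutive entries of the reading word of a standard tableau $P$. The key observation driving the analysis is the \emph{row-boundary inequality}: whenever two entries are consecutive in $r$ because one is the last entry of a lower row and the other is the first entry of the row above, the lower entry strictly exceeds the upper entry. This follows because the column-$1$ entry of the lower row strictly exceeds the column-$1$ entry of the upper row, and then row monotonicity pushes the last entry of the lower row even larger.

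For part~\eqref{lem: K_2^- Config:K_2^+ is impossible}, I would enumerate the four possible distributions of three consecutive reading-word entries across rows---all in one row; first two in a lower row with the third in the row above; first in a lower row with the next two in the row above; one entry in each of three consecutive rows---and check that the pattern $xzy$ with $x<y<z$ is incompatible with each. Two of the cases fail by row monotonicity alone (all three in one row would force $x<z<y$, and $z,y$ consecutive in an upper row would force $z<y$). The remaining two are ruled out by the row-boundary inequality: if $x,z$ sit consecutively in a lower row with $y$ starting the row above, then every entry of the lower row exceeds $y$, yet $x$ lies in that row and $x<y$; and in the span-three-rows case, column-$1$ monotonicity forces $y<z<x$, contradicting $x<z$.

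For part~\eqref{lem: K_2^- Config:P}, the analogous four-case analysis for the pattern $zxy$ with $x<y<z$ leaves exactly one surviving configuration: $z$ is the last entry of some row $i+1$ and $x,y$ are the first two entries of row $i$. I would then use the hypothesis that the move is not $K_B$ to force $z$ to be the sole entry of row $i+1$. Indeed, if $z$ had a left neighbor $r_j$ at position $(i+1,c-1)$ with $c\geq 2$, then $r_j<z$ by row monotonicity, while column monotonicity applied to box $(i,c-1)$ (which exists by the Young-diagram shape condition, since rows weakly decrease in length going down) combined with row monotonicity in row $i$ gives $r_j>x$. This would place $r_j$ strictly between $x$ and $z$, making the move $K_B^-$, a contradiction. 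Once $z$ is alone in its row, the weakly-decreasing row-length condition forces every row below to have length exactly~$1$, producing the decreasing column $r_1>r_2>\cdots>r_\ell>z$ at the bottom of the reading word. Reading the tableau from bottom to top, left to right then assembles~\eqref{eq:lem: K_2^- Config} and produces the shape of $P$ shown in Figure~\ref{fig: configK2+}.

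The main obstacle is the clean organization of the case analysis together with the column-comparison step that forces $r_j$ strictly between $x$ and $z$; this latter step is the $K_2$-dual of the argument used in part~\eqref{lem: K_1^+ Config:P} of Lemma~\ref{lem: K_1^+ Config}, so the overall writeup can be structured to mirror that proof nearly line for line.
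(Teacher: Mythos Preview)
Your proposal is correct and follows essentially the same approach as the paper: both proofs analyze where the three consecutive reading-word entries can sit in $P$, use row/column monotonicity to eliminate configurations, and invoke the not-$K_B$ hypothesis to force $z$ into its own row. Your version is simply more explicit---you enumerate all four row-distribution cases, whereas the paper's proof of part~\eqref{lem: K_2^- Config:K_2^+ is impossible} exhibits only the one nontrivial subtableau, and its proof of part~\eqref{lem: K_2^- Config:P} asserts ``$x$ is in the first column'' and then deduces $z<r_\ell$ directly, which amounts to the same column-comparison step you spell out.
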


\begin{figure}[ht]
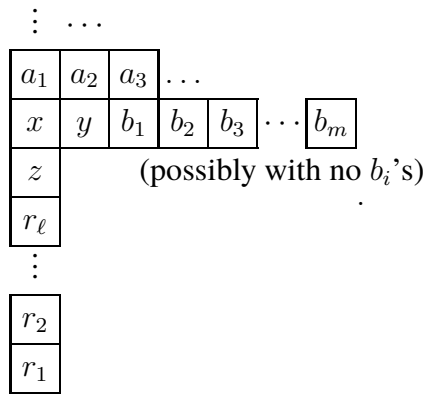

\centering
\[
\begin{ytableau}
\none[\vdots] 
 & \none[\cdots] \\
a_1 & a_2 & a_3 & \none[\dots]
\\
x & y & b_1 & b_2 & b_3 & \none[\cdots] & b_m \\
z & \none & \none & \none & \none & \none[\text{(possibly with no $b_i$'s)}] \\
r_\ell\\
\none[\vdots]\\
r_2\\
r_1
\end{ytableau}. 
\]
\caption{\emph{General form of a standard tableau $P$ whose row reading word can undergo a $K_2^-$ move which is not $K_B$}.}
\label{fig: configK2+}
\end{figure}

\begin{proof}
First, we prove part~\eqref{lem: K_2^- Config:K_2^+ is impossible} of the lemma. 
Assume (for the sake of contradiction) that one could perform a $K_2^+$ move on $r$. 
Then $r$ must contain a $xzy$ pattern.
Hence, since $r$ is the row reading word of $P$, the tableau $P$ must contain the following subtableau:
$$\ytableausetup{centertableaux}
\begin{ytableau}
 y &\none[\hdots]\\
\none & \none & x & z
\end{ytableau}$$
Notice that $y$ is north or northwest of $x$ but $x<y.$ 
This is a contradiction to the fact that $P$ is a standard tableau. 
Therefore, we cannot perform a $K_2^+$ move on $r$.

Next, we prove part~\eqref{lem: K_2^- Config:P} of the lemma.
Suppose we perform a $K_2^-$ move 
$zxy \mapsto xzy$ 
on $r$ which is not a $K_B$ move. 
If $r_1 = z$, then the last two rows of $P$ are of the form 
\[
\ytableausetup{centertableaux}
\begin{ytableau}
x & y & \none[\cdots]\\
z
\end{ytableau},
\]
so $P$ is of the form given in Figure~\ref{fig: configK2+}.

Suppose $r_1 \neq z$, 
and write $r=r_1 \, \dots \, r_\ell \, z \, x \, y \, \dots \, r_{n-1} \, r_n$. 
Since our $K_2^-$ move is not $K_B$, we must have either $r_\ell < x$ or $z < r_\ell$. 
Since $P$ is standard and $x$ is in the first column, we cannot have $r_\ell < x$. So $z < r_\ell$. 
Therefore $z$ is in its own row in $P$. 
Since the rows of $P$ are weakly decreasing in length, the rows of $P$ below $z$ are of size $1$. 
Since $P$ is standard, we have $r_1 > r_2 > \dots > r_\ell$. 
So $r$ is of the form given in~\eqref{eq:lem: K_2^- Config} and $P$ is of the form given in Figure~\ref{fig: configK2+}. 
\end{proof}

\begin{remark}
In general, a $K_2^-$ move on the row reading word of a standard tableau may (or may not) be $K_B$. 
\end{remark}

The proofs of the next two lemmas, Lemmas~\ref{lem: K_1^+ ss time 1} and~\ref{lem: K_2^- ss time 1}, are similar.

\begin{lemma}
\label{lem: K_1^+ ss time 1}
Suppose $r = r_1 r_2 \dots r_n \in S_n$ is the row reading word of a standard tableau $P$. 
Let $w$ be a permutation
which differs from $r$ by one proper $K_1$ move. 
Then $w$ first reaches its steady state at $t=1$. 
\end{lemma}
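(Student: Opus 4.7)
By Lemma~\ref{lem: K_1^+ Config}\eqref{lem: K_1^+ Config:K_1^-}, every $K_1^-$ move on a row reading word $r$ is $K_B$, so a proper $K_1$ move from $r$ to $w$ must be $K_1^+$; by part~\eqref{lem: K_1^+ Config:cannot be KB} of that lemma it is then automatically non-$K_B$. Hence $r$ and $P$ have the structure of Figure~\ref{fig: configK1+}, and
\[
r = r_1\cdots r_\ell\, y\, x\, z\, b_1\cdots b_m\, a_1\cdots a_s\, R_{k-1}\cdots R_1, \qquad w = r_1\cdots r_\ell\, y\, z\, x\, b_1\cdots b_m\, a_1\cdots a_s\, R_{k-1}\cdots R_1,
\]
with $a_1<x$ (from column $1$ of $P$) and $z<b_1$ (from the row of $x$). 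My approach is to apply Proposition~\ref{prop:t=0 generalization} twice: first to show $w$ is not in steady state at $t=0$, and then to show $w$ is in steady state at $t=1$.

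\textbf{Step 1 ($t=0$).} The maximal increasing runs of $w$ at $t=0$, read left to right, are the singletons $\{r_1\},\dots,\{r_\ell\}$, then $\{y,z\}$, then $\{x,b_1,\dots,b_m\}$, then the rows of $P$ above. Stacking right-to-left into the configuration array (with no shifts, since $w$ has no empty boxes at $t=0$), I split into two cases. If $m=0$, the row $\{y,z\}$ of length $2$ sits directly below a row $\{x\}$ of length $1$, violating the weakly-decreasing row-length condition. If $m\ge 1$, column~$2$ of the array contains $b_1$ directly above $z$ (with $a_2$ above $b_1$), but $b_1>z$, violating column-strictness. In either case, Proposition~\ref{prop:t=0 generalization} implies $w$ is not in steady state at $t=0$.

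\textbf{Step 2 ($t=1$).} I will run the carrier algorithm on $r$ and on $w$ in parallel. The first $\ell+1$ insertions ($r_1,\dots,r_\ell,y$) are identical in both, ending with the carrier equal to $\underbracket{y\,e\,\cdots\,e}$ and the same prefix ejected. For $r$, inserting $x$ next ejects $y$, and then inserting $z$ ejects a leftmost $e$; the carrier becomes $\underbracket{x\,z\,e\,\cdots\,e}$, and the pair $(y,e)$ is appended to the output. For $w$, inserting $z$ first ejects a leftmost $e$ and places $z$ to the right of $y$, and then inserting $x$ ejects $y$; the carrier becomes the same $\underbracket{x\,z\,e\,\cdots\,e}$, but the pair $(e,y)$ is appended instead. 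Because the carriers now coincide and the remaining inputs $b_1,\dots,b_m,a_1,\dots,a_s,R_{k-1},\dots,R_1$ are identical for $r$ and $w$, all subsequent ejections and the flushing phase agree. Thus the full ejected sequences differ only by swapping one $y$ with the $e$ immediately to its right, so $w$'s BBS state at $t=1$ is obtained from $r$'s BBS state at $t=1$ by shifting the singleton ball $y$ one box to the right.

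\textbf{Step 3 ($t=1$ is steady state).} By Proposition~\ref{prop:t=0}, $r$ is in steady state at $t=0$, hence also at $t=1$. The maximal increasing runs of $w$ at $t=1$ therefore coincide with those of $r$ at $t=1$ (namely the rows of $P$), so its configuration array already has weakly-decreasing row lengths. Shifting $\{y\}$ by one affects only two gaps: the gap $(\{r_\ell\},\{y\})$ grows from $0$ to $1$, and the gap $(\{y\},\{x,z,b_1,\dots,b_m\})$ shrinks from $m+1$ to $m$. I apply Lemma~\ref{lem:separation condition suppose two conditions}. For the first modified gap, $g=1$ is not less than $|\{r_\ell\}|=1$, so the condition is vacuous. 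For the second, the left block $\{y\}$ has length~$1$, so a nontrivial condition arises only when $g=m=0$, in which case it reduces to $x<y$, which holds by hypothesis. All other adjacent pairs inherit their separation conditions from the steady state of $r$. Hence $w$ is in steady state at $t=1$, and together with Step~1 this gives steady-state time exactly~$1$. The principal obstacle is Step~2: verifying that the carrier trajectories of $r$ and $w$ re-synchronize after the $K_1^+$ swap, since Step~3 then reduces to the single inequality $x<y$.
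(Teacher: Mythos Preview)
Your proof is correct and shares the paper's core strategy---run the carrier algorithm on $w$ and invoke Proposition~\ref{prop:t=0 generalization}---but verifies the $t=1$ steady state differently. The paper observes directly that after inserting $y,z,x$ the carrier holds $x\,z\,e\cdots e$ and the remaining input is precisely the reading word of the subtableau of $P$ lying above the row $\{y\}$; completing the algorithm therefore yields a configuration whose array is $P$ with some rows shifted to the right, which is immediately a standard skew tableau with weakly decreasing row lengths. You instead run the carrier on $r$ and $w$ in parallel, show that the two trajectories resynchronize once both of $z$ and $x$ have been inserted, and conclude that the $t=1$ states differ only by shifting the singleton $\{y\}$ one box to the right; the steady-state check then reduces to the two affected separation conditions, of which only one (the case $m=0$) is nontrivial and amounts to the inequality $x<y$. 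Your route trades a global structural observation for a mechanical local check; either buys the same conclusion. You also supply an explicit argument that $w$ is not already in steady state at $t=0$, which the paper leaves implicit (it follows since $w\neq r$, Knuth moves preserve $\P{\,\cdot\,}$, and $r$ is the unique reading word with insertion tableau $P$, so $w$ is not a reading word and Proposition~\ref{prop:t=0} applies).
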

\begin{proof}
By Lemma~\ref{lem: K_1^+ Config}, 
applying a $K_1$ move that is not $K_B$ to $r$ must be a $K_1^+$ move
$yxz \mapsto yzx$ such that  
\begin{align*}
r &=\underbrace{r_1 \, r_2 \, \dots \, r_\ell \, y}_{\text{decreasing}} \, x \, z \, \dots \, r_{n-1} \, 
r_n 
\\
w=K_1^+(r) &=
\underbrace{r_1 \, r_2 \, \dots \, r_\ell \, y}_{\text{decreasing}}z 
\, x \, \dots \, r_{n-1}
\, r_n
\end{align*}
where 
$r_1 > r_2 > \dots > r_\ell > y$ (if $r_1 \neq y$) 
and $x< y <z$.

We apply the carrier algorithm to $w$.
First, we insert $r_1, r_2, \dots, r_\ell, y$ into the carrier.
Since these are decreasing, we eject $e, r_1, r_2, \dots, r_\ell$ from the carrier in consecutive order: 
\begin{align*}
\underbracket{\, e \, e \, \cdots \, e \, }_{\text{carrier}} 
&
\,
\underbrace{r_1 \, r_2 \, r_3 \, \dots \, r_{\ell-1} \, r_\ell \, y}_{\text{decreasing}} \, z \, x \, \dots \, r_n 
\\  
e 
\underbracket{\, r_1 \, e \, e \, \cdots \, e \, }
&
\underbrace{r_2 \, r_3 \, \dots \, r_{\ell-1} \, r_\ell \, y}_{\text{decreasing}} \, z \, x \, \dots \, r_n 
\\ 
e \, r_1 \, 
\underbracket{\, r_2 \, e \, e \, \cdots \, e \, }
& 
\, 
\underbrace{ r_3 \dots \, r_{\ell-1} \, r_\ell \, y}_{\text{decreasing}}\, z \, x \,  \dots \, r_n 
\\
& \vdots 
\\
e \, r_1 \, r_2 \, \dots \, r_\ell \, 
\underbracket{ \, y \, e \, e \, \cdots \, e \, }
&
 \, z \, x \, \dots \, r_n
\end{align*}
Next, we insert $z$ into the carrier.
Since the only non-$e$ entry in the carrier, $y$, is smaller than $z$, we eject an $e$: 
\begin{align*}
e \, e \, r_1 \, r_2 \dots r_\ell \, e 
\underbracket{ \, y \, z \, e \, e \, \cdots \, e \, }
&
\, x \, r_{\ell+4} \, \dots \, r_n
\end{align*}
Next, we insert $x$ into the carrier. 
Since $x <y < z$, we eject $y$ and get 
\begin{align*}
e \, e \, r_1 \, r_2 \dots r_\ell \, e \, y \, 
\underbracket{ \, x \, z \, e \, e \, \cdots \, e \, }
&
\, r_{\ell+4} \, \dots \, r_n
\end{align*}

Note that the string
\[
x \, z \, r_{\ell+4} \, \dots \, r_{n-1} \, r_n
\]
is equal to the consecutive subsequence $r_{\ell+2} \, \dots r_{n-1} \, r_n$ of $r$.  
This string 
is the row reading word of the subtableau (possibly with no $b_i$'s)
\[\begin{ytableau}
\none[\vdots] 
 & \none[\cdots] \\
 a_1 & a_2 & a_3 & \none[\cdots] \\
  x & z & b_1 & b_2 & \none[\cdots] 
\end{ytableau} 
\] 
of $P$, where $P$ is given in Figure~\ref{fig: configK1+}. 
Since this subtableau has the shape of a partition and 
 has increasing rows and columns, completing the carrier algorithm yields the configuration at time~$t=1$: 
\begin{align*}
e \, e \, r_1 \, r_2 \dots r_\ell \, e \, y \, 
\overbrace{e \, e \, \dots \, e}
^{\substack{
0 \text{ or more}\\ \text{copies}}}  
\,
x \, z \, 
 \, b_1 \, b_2 \, b_3 \,  \, \dots \, 
&
\, a_1 \, a_2 \, \dots \, \dots
r_{n-1} \, r_n \, 
\underbracket{\, e \, e \, \cdots \, e \, }.
\end{align*}

The configuration array at $t=1$ is the skew tableau created by taking $P$ and shifting some of the rows to the right. Since $P$ is standard tableau with partition shape to begin with, the configuration array is a standard skew tableau with weakly increasing rows. 
By Proposition~\ref{prop:t=0 generalization}, the configuration at $t=1$ is in steady state.
\end{proof}

\begin{lemma}
\label{lem: K_2^- ss time 1}
Suppose $r = r_1 r_2 \dots r_n \in S_n$ is the row reading word of a standard tableau $P$. 
Let $w$ be a permutation
which differs from $r$ by one proper $K_2$ move. 
Then $w$ first reaches its steady state at $t=1$.
\end{lemma}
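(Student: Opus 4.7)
The plan is to mirror the proof of Lemma~\ref{lem: K_1^+ ss time 1} closely, using Lemma~\ref{lem: K_2^- Config} to pin down the structure of $w$ and then running the carrier algorithm explicitly. First I would invoke Lemma~\ref{lem: K_2^- Config}\eqref{lem: K_2^- Config:K_2^+ is impossible} to rule out any $K_2^+$ move, so that a proper $K_2$ move from $r$ to $w$ must be a non-$K_B$ $K_2^-$ move; then part~\eqref{lem: K_2^- Config:P} gives
\[
w = r_1\,r_2\,\dots\,r_\ell\,x\,z\,y\,b_1\,b_2\,\dots\,b_m\,a_1\,a_2\,\dots
\]
with $r_1 > r_2 > \dots > r_\ell > z$ (when $\ell \ge 1$) and $x<y<z$, while $P$ takes the shape of Figure~\ref{fig: configK2+}.

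Next I would run the carrier algorithm on $w$. Inserting the decreasing prefix $r_1, r_2, \dots, r_\ell$ ejects $e, r_1, \dots, r_{\ell-1}$ in succession and leaves the carrier in the state $r_\ell, e, e, \dots, e$. Inserting $x$ (which is smaller than $r_\ell$) ejects $r_\ell$; inserting $z>x$ ejects an $e$; and inserting $y$, where $x<y<z$, ejects $z$. At this point the output to the left of the carrier reads $e\,r_1\,r_2\,\dots\,r_\ell\,e\,z$ and the carrier holds $x,y,e,e,\dots,e$.

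The crucial observation is that this carrier state is precisely what one obtains by inserting $x$ and then $y$ into an initially empty carrier, so the remainder of the algorithm (processing $b_1,\dots,b_m,a_1,a_2,\dots$ and then flushing) behaves identically to running the full carrier algorithm from scratch on $r' = x\,y\,b_1\,\dots\,b_m\,a_1\,a_2\,\dots$. But $r'$ is the row reading word of the partition-shape standard subtableau $P'$ of $P$ obtained by deleting the rows containing $z,r_\ell,\dots,r_1$, so by Proposition~\ref{prop:t=0} it is already in steady state, and hence the carrier produces its $t=1$ state, which is steady as well. Assembling everything, the configuration of $w$ at $t=1$ has as its configuration array the tableau $P$ with its upper rows shifted rightward; since $P$ has partition shape and is standard, this array is a standard skew tableau whose rows are weakly decreasing in length, so Proposition~\ref{prop:t=0 generalization} yields that $w$ is in steady state at $t=1$.

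The step I expect to be the main obstacle is the reduction identifying the continuation of the carrier on $w$ with running the carrier from scratch on $r'$; once that identification is explicit, verifying that the final configuration array remains a standard skew tableau with weakly decreasing row lengths is routine bookkeeping on the partition-shape structure of $P$, exactly as in the $K_1^+$ case.
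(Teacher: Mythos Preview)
Your proposal is correct and follows essentially the same approach as the paper's own proof: invoke Lemma~\ref{lem: K_2^- Config} to pin down the form of $w$, run the carrier algorithm through the prefix $r_1,\dots,r_\ell,x,z,y$, observe that what remains is the carrier applied to the row reading word of the partition-shaped subtableau of $P$ above the $z$-row, and conclude via Proposition~\ref{prop:t=0 generalization}. Your explicit identification of the carrier state with a fresh start on $r'=x\,y\,b_1\dots$ is a slightly more verbose way of saying what the paper compresses into one line, but the content is identical.
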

\begin{proof}
By Lemma~\ref{lem: K_2^- Config}, 
applying a $K_2$ move that is not $K_B$ to $r$ must be 
a $K_2^-$ move 
$zxy \mapsto xzy$ to $r$  
such that 
\begin{align*}
r= r_1 \dots r_\ell \, & z \, x \, y 
\, \dots \, r_{n-1} \, r_n
\\
w = K_2^-(r) = r_1 \dots r_\ell \, & x \, z \, y \, \dots \, r_{n-1} \, r_n
\end{align*}
where
$r_1 > r_2 > \dots > r_\ell > z$ (if $r_1 \neq z$) and $x<y<z$.

As in the proof of Lemma~\ref{lem: K_1^+ ss time 1}, 
we apply the carrier algorithm to $w$.
We insert the decreasing sequence $r_1, r_2, \dots, r_\ell, x$ into the carrier and eject $e, r_1, r_2, \dots, r_\ell$, in that order. 
As we insert $z$ and $y$, we eject $e$ and $z$, in that order: 
\begin{align*} 
\underbracket{\, e \, \cdots \, e \, }_{\text{carrier}} 
&
\, \underbrace{r_1 \, r_2 \, r_3 \, \dots r_\ell \, x}_{\text{decreasing}} \, z \, y \, \dots \, r_{n-1} \, r_n 
\\ 
e \, 
\underbracket{\, r_1 \, e \, e \, \cdots \, e \, } 
&
\, \underbrace{r_2 \, r_3 \, \dots r_\ell \, x}_{\text{decreasing}} \, z \, y  \, \dots \, r_{n-1} \, r_n 
\\ 
e \, r_1 \,
\underbracket{\, r_2 \, e \, e \, \cdots \, e \, } 
&
\, \underbrace{r_3 \dots r_\ell \, x}_{\text{decreasing}} \, z \, y \, \dots \, r_{n-1} \, r_n 
\\
& \vdots 
\\
e \, r_1 \, r_2 \, \dots \, r_\ell \, 
\underbracket{\, x \, e \, 
\dots \, e \, } 
&
 \, z \, y \, \dots \, r_{n-1} \, r_n 
\\
e \, r_1 \, r_2 \, \dots \, r_\ell \, e \, 
\underbracket{\, x \, z \, e \, 
\dots \, e \, } 
&
  \, y \, r_{\ell+4} \, \dots \, r_n
\\
e \, r_1 \, r_2 \, \dots \, r_\ell \, e \, z \,
\underbracket{\, x \, y \, e \, 
\dots \, e \, } 
&
  \, r_{\ell+4}\, \dots r_n
\end{align*}
Note that the string
\[
x \, y \, r_{\ell+4} \, \dots \, r_{n-1} \, r_n
\]
is equal to the consecutive subsequence $r_{\ell+2} \, \dots r_{n-1} \, r_n$ of $r$.  This string 
is the row reading word of the subtableau (possibly with no $b_i$'s)
\[\begin{ytableau}
\none[\vdots] 
 & \none[\cdots] \\
 a_1 & a_2 & a_3 & \none[\cdots] \\
  x & y & b_1 & b_2 & \none[\cdots]
\end{ytableau} 
\]
of $P$, where $P$ is given in Figure~\ref{fig: configK2+}. 
Since this subtableau has the shape of a partition and 
 has increasing rows and columns, completing the carrier algorithm yields the configuration at time~$t=1$: 
\begin{align*}
e \, e \, r_1 \, r_2 \dots r_\ell \, e \, z \, 
\overbrace{e \, \dots \, e}^{\substack{0 \text{ or more} \\ \text{copies}}} 
\,
x \, y \, 
 \, b_1 \, b_2 \, b_3 \, \cdots \, 
&
\, 
\, a_1 \, a_2  \, \dots
\, 
r_{n-1} \, r_n \, 
\underbracket{\, e \, e \, \cdots \, e \, }.
\end{align*}

The configuration array at $t=1$ is the skew tableau created by taking $P$ and shifting some of the rows to the right.
Since $P$ is standard tableau with partition shape to begin with, the configuration array is a standard skew tableau with weakly increasing rows.
By Proposition~\ref{prop:t=0 generalization}, the configuration at $t=1$ is in steady state.
\end{proof}



\section*{Acknowledgements}
This research project started during the University of Connecticut 2020 Mathematics REU. 
Our project was inspired by a blog post~\cite{Lewis20Blog} for the University of Minnesota's Open Problems in Algebraic Combinatorics (OPAC) conference and conversations with Joel B. Lewis. 
We thank Ian Whitehead for serving as a faculty mentor to B.~Drucker's research course in Fall 2020 and for helpful suggestions. 
We also thank Pavlo Pylyavskyy and Rei Inoue for useful comments and Marisa Cofie, Olivia Fugikawa, Madelyn Stewart, and David Zeng for many discussions during SUMRY 2021. 
Special thanks to Darij Grinberg for proving one of our conjectures and for helpful feedback. 
This work also benefited from computation using {\sc SageMath}~\cite{sage} and the High Performance Computing 
facility at the University of Connecticut. 
Finally, we thank the anonymous referees whose suggestions helped improve and clarify this paper.

\bibliographystyle{alphaurl}
\bibliography{dggrs}

\end{document}